\newtheorem{theo}{Theorem}[section]
\newtheorem{lemma}[theo]{Lemma}
\newtheorem{prop}[theo]{Proposition}
\theoremstyle{definition}
\newtheorem{defin}[theo]{Definition}
\theoremstyle{remark}
\newtheorem{rmk}[theo]{Remark}
\numberwithin{equation}{section}
\def\R{\mathbb{R}}
\def\Z{\mathbb{Z}}
\def\C{\mathbb{C}}
\def\n{\eta}
\def\m{\mu}
\def\n'{\nu}
\begin{document}
\title[Perturbed $q$-difference-differential equations with holomorphic coefficients]{On the multiple-scale analysis for some linear partial $q$-difference and differential equations with holomorphic coefficients}
\author{Thomas Dreyfus}
\address{IRMA, Universit\'e de Strasbourg, 7 rue Ren\'e Descartes, 67084 Strasbourg, France\\}
\email{dreyfus@math.unistra.fr}
\author{Alberto Lastra}
\address{University of Alcal\'{a}, Departamento de F\'{i}sica y Matem\'{a}ticas,
Ap. de Correos 20, E-28871 Alcal\'{a} de Henares (Madrid), Spain}
\email{alberto.lastra@uah.es}
\author{St\'ephane Malek}
\address{University of Lille 1, Laboratoire Paul Painlev\'e,
59655 Villeneuve d'Ascq cedex, France}
\email{Stephane.Malek@math.univ-lille1.fr}

\keywords{Asymptotic expansion,  Borel-Laplace  transform,  Fourier  transform,  Formal  power  series, Singular perturbation, $q$-difference-differential equation.}

\subjclass[2010]{35C10, 35C20}
\date{\today}

\bibliographystyle{amsalpha}
\sloppy
\begin{abstract}
The analytic and formal solutions of certain family of $q$-difference-differential equations under the action of a complex perturbation parameter is considered. The previous study~\cite{lamaq} provides information in the case when the main equation under study is factorizable, as a product of two equations in the so-called normal form. Each of them gives rise to a single level of $q$-Gevrey asymptotic expansion. In the present work, the main problem under study does not suffer any factorization, and a different approach is followed. More precisely, we lean on the technique developed in~\cite{dreyfus}, where the first author makes distinction among the different $q$-Gevrey asymptotic levels by successive applications of two $q$-Borel-Laplace transforms of different orders both to the same initial problem and which can be described by means of a Newton polygon.
\end{abstract} 

\maketitle



\section{Introduction}

This work is devoted to the study of a family of linear $q$-difference-differential problems in the complex domain. It can be arranged into a series of works dedicated to the asymptotic study of holomorphic solutions to different kinds of $q$-difference-differential problems involving irregular singularities such as \cite{lamaq2}, \cite{lama2}, \cite{lamaq}, \cite{lamasa2},  and \cite{ma0}. The study of $q$-difference and $q$-difference-differential equations in the complex domain is a promising and fruitful domain of research. In the literature, one may find other interesting approaches to these problems. We refer to~\cite{taya2} as a reference, and contributions in the framework of nonlinear $q$-analogs of Briot-Bouquet type partial differential equations in~\cite{ya}. We provide~\cite{ta,ya2} as novel studies in this direction. 

The study of $q$-difference equations has also been under study in different applications in the last years. Some advances in this respect are~\cite{pr1,pr2,pr3}, and the references therein.

The main aim of this work is to study a family of $q$-difference-differential equations of the form
\begin{multline}\label{e1}
Q(\partial_z)\sigma_{q,t}u(t,z,\epsilon)\\
=(\epsilon t)^{d_{D_1}}\sigma_{q,t}^{\frac{d_{D_1}}{k_1}+1}R_{D_1}(\partial_{z})u(t,z,\epsilon)+(\epsilon t)^{d_{D_2}}\sigma_{q,t}^{\frac{d_{D_2}}{k_2}+1}R_{D_2}(\partial_{z})u(t,z,\epsilon)\\
+\sum_{\ell=1}^{D-1}\epsilon^{\Delta_\ell}t^{d_{\ell}}\sigma_{q,t}^{\delta_{\ell}}(c_{\ell}(t,z,\epsilon)R_{\ell}(\partial_z)u(t,z,\epsilon))+\sigma_{q,t}f(t,z,\epsilon),
\end{multline}
where $D,D_1,D_2$ are integer numbers larger than 3, $Q$, $R_{D_1}$, $R_{D_2}$ and $R_{\ell}$ for $\ell=1,\ldots, D-1$ are polynomials of complex coefficients, and $\Delta_{\ell}\ge0$, $\delta_\ell,d_\ell\ge1$ are nonnegative integers for every $1\le \ell\le D-1$. The numbers $d_{D_1}$ and $d_{D_2}$ are positive integers. $q$ stands for a real number with $q>1$.

We consider the dilation operator $\sigma_{q,t}$ acting on variable $t$, i.e. $\sigma_{q,t}(t\mapsto f(t)):=f(qt)$, and the generalization of its composition given by 
$$\sigma_{q,t}^{\gamma}(t\mapsto f(t)):=f(q^{\gamma}t),$$
for any $\gamma\in\R$. We also fix positive integer numbers $k_1$ and $k_2$ with 
$$1\le k_1<k_2.$$

As in the former work~\cite{ma} of the third author, the coefficients $c_{\ell}(t,z,\epsilon)$
and the forcing term $f(t,z,\epsilon)$ represent bounded holomorphic functions in the
vicinity of the origin in $\mathbb{C}^{2}$ w.r.t $(t,\epsilon)$ and on a horizontal
strip $H_{\beta} = \{ z \in \mathbb{C} / |\mathrm{Im}(z)| < \beta \}$ of width
$2\beta>0$ relatively to the space variable $z$. However, a new additional constraint
is required on the growth of the Taylor expansion of each $c_{\ell}$ according to the
mixed variable $t\epsilon$, see (\ref{e119}). It implies that the functions $c_{\ell}(t,z,\epsilon)$ can be extended as entire functions in the monomial $\epsilon t$ in the whole plane
$\mathbb{C}$ with so-called $q$-exponential growth of some order related to $k_{1}$
and $k_{2}$ (this terminology will be explained later in the paper).

Two singularly perturbed terms on the right hand side of equation (\ref{e1}) are distinguished. This makes a crucial difference with respect to the previous work~\cite{lamaq} in which only one term appears, whilst the multi-level $q$-Gevrey asymptotic behavior comes from the forcing term. More precisely, in that previous work we focused on families of $q$-difference-differential equations that can be factorized as a product of two operators in so-called normal forms each enjoying one single level of $q$-Gevrey asymptotics. In the present work, the appearance of these two terms would cause a multilevel $q$-Gevrey phenomenon in the study of the asymptotic solution of (\ref{e1}) regarding the perturbation parameter. 
Our approach is to follow a two-step procedure of summation of the formal solution, which makes the two $q$-Gevrey asymptotic orders emerge.

Another important difference compared to our previous contribution~\cite{lamaq} is that we are now able to handle holomorphic
coefficients in time $t$ whilst only polynomial coefficients were considered in~\cite{lamaq}. This
fact relies on new technical bounds for a $q$-analog of the convolution of order $k$
displayed in Proposition~\ref{prop3}.

The point of view we use here is similar to the one performed in the work of the first author, see~\cite{dreyfus}, and is related to direct constraints on the shape of the main equation via a possible description by a Newton polygon. It is important to stress that this approach is specific to the $q$-difference case. Namely, such a direct procedure for producing two different Gevrey levels in the differential case for the problem stated in the work~\cite{lama} is impossible due to very strong restrictions related to a formula used in the proof and appearing in~\cite{taya13}, see formula (8.7) p. 3630. In that case, only a proposal via factoring the main equation did actually work, as performed in our joint work~\cite{lama0}.


Let us briefly review the steps followed in order to achieve our main results in the present work.

Let $0\le p\leq \varsigma-1$. First, we apply $q$-Borel transformation of order $k_1$ to equation (\ref{e1}) in order to obtain our first auxiliary problem in a Borel plane, problem (\ref{e87}). A fixed point result in a complex Banach space of functions under an appropriate growth at infinity lead us to an analytic solution, $w_{k_1}^{\mathfrak{d}_p}(\tau,m,\epsilon)$ of (\ref{e87}). More precisely, $w_{k_1}^{\mathfrak{d}_p}(\tau,m,\epsilon)$ defines a continuous function defined in $(U_{\mathfrak{d}_p}\cup D(0,\rho))\times\R\times D(0,\epsilon_0)$, where $U_{\mathfrak{d}_p}$ is an infinite sector of bisecting direction $\mathfrak{d}_p$, and holomorphic with respect to the variables $\tau$ and $\epsilon$ in $(U_{\mathfrak{d}_p}\cup D(0,\rho))$ and $D(0,\epsilon_0)$, respectively. In addition to that, it holds that this function admits $q$-exponential growth of order $\kappa$ at infinity with respect to $\tau$ in $U_{\mathfrak{d}_p}$.
This result is described in detail in Proposition~\ref{prop347}.

A second auxiliary problem in the Borel plane is constructed by applying the formal $q$-Borel transformation of order $k_2$ to the main problem (\ref{e1}). The second auxiliary equation is stated in (\ref{e412b}). A second fixed point result in another appropriate Banach space of functions allow us to guarantee the existence of an actual solution of the second auxiliary problem, $w^{\mathfrak{d}_p}_{k_2}(\tau,m,\epsilon)$, defined in $S_{\mathfrak{d}_{p}}\times\R\times D(0,\epsilon_0)$ and holomorphic with respect to $\tau$ and $\epsilon$ in $S_{\mathfrak{d}_p}$ and $D(0,\epsilon_0)$, respectively. Here, $S_{\mathfrak{d}_p}$ stands for an infinite sector with vertex at the origin and bisecting direction $\mathfrak{d}_p$. 
Moreover, this function suffers $q$-exponential growth of order $k_{2}$
at infinity w.r.t. $\tau$ on $S_{\mathfrak{d}_{p}}$. This statement is proved in Proposition~\ref{prop11}.

As a matter of fact, the key point in our reasoning is the link between the $q$-Laplace transform of order $\kappa$ with respect to $\tau$ variable of $w_{k_1}^{\mathfrak{d}_p}$ and $w_{k_2}^{\mathfrak{d}_p}$. In Proposition~\ref{prop12}, we guarantee that both functions coincide in the intersection of their domain of definition. This would entail that the function 
$\mathcal{L}_{q;1/\kappa}^d(w^d_{k_1}(\tau,m,\epsilon))$ can be continued along direction $\mathfrak{d}_p$, with $q$-exponential growth of order $k_2$, see Propoposition~\ref{prop12}.

The construction of the analytic solution of (\ref{e1}), $u^{\mathfrak{d}_p}(t,z,\epsilon)$, is obtained after the application of $q$-Laplace transformation of order $k_2$ and inverse Fourier transform, providing a holomorphic function defined in $\mathcal{T}\times H_{\beta'}\times\mathcal{E}_{p}$, where $\mathcal{T}$ is some well chosen bounded sector centered at 0 and
$\{ \mathcal{E}_{p} \}_{0 \leq p \leq \varsigma-1}$ represents a good covering in
$\mathbb{C}^{\ast}$ (see Definition~\ref{defin111}). This result is described in Theorem~\ref{teo872}. The following diagram illustrates the procedure to follow.
\begin{figure}[h]
	\centering
		\includegraphics[width=0.90\textwidth]{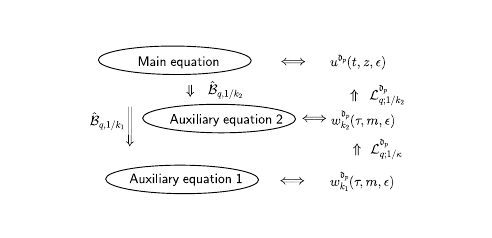}
		\caption{Scheme of the different Borel levels attained in the construction of the solution}
	\label{fig:a2}
\end{figure}
For the attainment of the asymptotic properties of the analytic solution we make use of a Ramis-Sibuya type theorem in two levels (see Theorem~\ref{teo1215}), and the properties held by the difference of two analytic solutions in the intersection of their domains, whenever it is not empty. The conclusion yields two different $q$-Gevrey levels of asymptotic behavior of the analytic solution with respect to the formal solution depending on the geometry of the problem. The final main result states the splitting of both, the formal and the analytic solutions to the problem under study, as a sum of three terms. More precisely, if $\mathbb{F}$ denotes the Banach space of holomorphic and bounded functions defined in $\mathcal{T}\times H_{\beta'}$, and $\hat{u}(t,z,\epsilon)$ stands for the formal power series solution of (\ref{e1}), then it holds that 
$$\hat{u}(t,z,\epsilon)=a(t,z,\epsilon)+\hat{u}_1(t,z,\epsilon)+\hat{u}_2(t,z,\epsilon),$$
where $a(t,z,\epsilon)\in\mathbb{F}\{\epsilon\}$ and $\hat{u}_1(t,z,\epsilon),\hat{u}_2(t,z,\epsilon)\in\mathbb{F}[[\epsilon]]$ and such that for every $0\le p\le \varsigma-1$, the function $u^{\mathfrak{d}_p}$ can be written in the form
$$u^{\mathfrak{d}_{p}}(t,z,\epsilon)=a(t,z,\epsilon)+u^{\mathfrak{d}_{p}}_1(t,z,\epsilon)+u^{\mathfrak{d}_{p}}_2(t,z,\epsilon),$$
where $\epsilon\mapsto u_1^{\mathfrak{d}_{p}}(t,z,\epsilon)$ is a $\mathbb{F}$-valued function that admits $\hat{u}_1(t,z,\epsilon)$ as its $q$-Gevrey asymptotic expansion of order $1/k_1$ on $\mathcal{E}_{p}$ and also $\epsilon\mapsto u_2^{\mathfrak{d}_{p}}(t,z,\epsilon)$ is a $\mathbb{F}$-valued function that admits $\hat{u}_2(t,z,\epsilon)$ as its $q$-Gevrey asymptotic expansion of order $1/k_2$ on $\mathcal{E}_{p}$. This corresponds to Theorem~\ref{teo1281}.
 


The paper is organized as follows.

In Section \ref{sec21}, we define a weighted Banach space of continuous functions on the domain $(D(0,\rho)\cup U)\times \R$ with $q$-exponential growth on the unbounded sector $U$ with respect to the first variable, and exponential decay on $\R$ with respect to the second one. We study the continuity properties of several operators acting of this Banach space. Section \ref{sec22}. is concerned with the study of a second family of Banach spaces of functions with $q$-exponential growth on an infinite sector with respect to one variable and exponential decay on $\R$ with respect to the other variable. In Section \ref{sec3}, we recall the definitions and main properties of formal and analytic operators involved in the solution of the main equation. Namely, formal $q$-Borel transformation and an analytic $q$-Laplace transform of certain $q$-Gevrey orders, and inverse Fourier transform. In Section \ref{seccion41}. and Section \ref{seccion42}, we study the analytic solutions of two auxiliary problems in two different Borel planes and relate them via $q$-Laplace transformation (see Theorem~\ref{teo872}). In Section \ref{sec5}, we describe in detail the main problem under study, and construct its analytic solution and the rate of growth of the difference of two neighboring solutions in their common domain of definition. Finally, Section \ref{seccion6} deals with the existence of a formal solution of the problem, and studies the asymptotic behavior relating the analytic and the formal solutions through a multi-level $q$-Gevrey asymptotic expansion (Theorem~\ref{teo1281}). This result is attained with the application of a two-level $q$-version of Ramis-Sibuya theorem (Theorem~\ref{teo1215}).

\section{Auxiliary Banach spaces of functions}

In this section, we describe auxiliary Banach spaces of functions with certain growth and decay behavior. We also provide important properties of such spaces under certain operators.

Let $U_d$ be an open unbounded sector with vertex at the origin in $\mathbb{C}$, bisecting direction $d\in\mathbb{R}$ and positive opening. We take $\rho>0$ and consider $D(0,\rho)=\{\tau\in\mathbb{C}:|\tau|<\rho\}$.  

We fix real numbers $\beta,\mu,\delta>0$, $q>1$ and $\alpha\ge0$ through the whole section. We assume the distance from $U_d\cup D(0,\rho)$ to the real number $-\delta$ is strictly larger than 1. Let $k>0$. We denote $\overline{D}(0,\rho)$ the closure of $D(0,\rho)$.

The next definition of a Banach space of functions, and subsequent properties have already been studied in previous works. Analogous spaces were treated in~\cite{lamaq2,lm1}, inspired by the functional spaces appearing in~\cite{ra}. We refer the reader to~\cite{lamaq,ma} for some of the proofs of the following results, whose statements are included for the sake of completeness.

\subsection{First family of Banach spaces of functions with q-exponential growth and exponential decay}\label{sec21}

\begin{defin}\label{defi1}
Let $q>1$. We denote $\hbox{Exp}^{q}_{(k,\beta,\mu,\alpha,\rho)}$ the vector space of complex valued continuous functions $(\tau,m)\mapsto h(\tau,m)$ on $(U_d\cup\overline{D}(0,\rho))\times\mathbb{R}$, holomorphic with respect to $\tau$ on $U_d\cup D(0,\rho)$ and such that
$$\left\|h(\tau,m)\right\|_{(k,\beta,\mu,\alpha,\rho)}=\sup_{\substack{\tau\in U_d\cup\overline{D}(0,\rho),\\m\in\mathbb{R}}}(1+|m|)^{\mu}e^{\beta|m|}\exp\left(-\frac{k}{2}\frac{\log^2|\tau+\delta|}{\log(q)}-\alpha\log|\tau+\delta|\right)|h(\tau,m)|  $$
is finite. The space $(\hbox{Exp}^{q}_{(k,\beta,\mu,\alpha,\rho)},\left\|\cdot\right\|_{(k,\beta,\mu,\alpha,\rho)})$ is a Banach space.
\end{defin}

The proof of the following lemma is straightforward. 
\begin{lemma}\label{lema1}
Let $(\tau,m)\mapsto a(\tau,m)$ be a bounded continuous function on $(U_d\cup\overline{D}(0,\rho))\times\mathbb{R}$, holomorphic with respect to $\tau$ on $U_d\cup D(0,\rho)$. Then, it holds that
$$\left\|a(\tau,m)h(\tau,m)\right\|_{(k,\beta,\mu,\alpha,\rho)}\le\left(\sup_{\tau\in U_d\cup \overline{D}(0,\rho),m\in\mathbb{R}}|a(\tau,m)|\right)\left\|h(\tau,m)\right\|_{(k,\beta,\mu,\alpha,\rho)},$$
for every $h(\tau,m)\in \hbox{Exp}^{q}_{(k,\beta,\mu,\alpha,\rho)}$.
\end{lemma}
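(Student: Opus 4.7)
The statement is the standard fact that multiplication by a bounded function is a bounded operator on a weighted sup-norm space, so my plan is very short. The essential observation is that the weight $W(\tau,m) := (1+|m|)^\mu e^{\beta|m|}\exp\bigl(-\tfrac{k}{2}\tfrac{\log^2|\tau+\delta|}{\log q}-\alpha\log|\tau+\delta|\bigr)$ appearing in the definition of $\|\cdot\|_{(k,\beta,\mu,\alpha,\rho)}$ does not interact with $a$ at all; it merely weights the modulus of the underlying function.

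First I would set $M:=\sup_{(\tau,m)\in (U_d\cup\overline{D}(0,\rho))\times\mathbb{R}}|a(\tau,m)|$, which is finite by the boundedness hypothesis on $a$. For each point $(\tau,m)$ in the domain, the pointwise estimate $|a(\tau,m)h(\tau,m)|\le M\,|h(\tau,m)|$, multiplied by $W(\tau,m)$, yields $W(\tau,m)|a(\tau,m)h(\tau,m)|\le M\cdot W(\tau,m)|h(\tau,m)|$. Taking the supremum over $(\tau,m)$ on both sides produces exactly $\|ah\|_{(k,\beta,\mu,\alpha,\rho)}\le M\cdot\|h\|_{(k,\beta,\mu,\alpha,\rho)}$, since $M$ is a constant that factors out of the supremum on the right. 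Before concluding, I would briefly verify that the product $ah$ actually belongs to $\mathrm{Exp}^q_{(k,\beta,\mu,\alpha,\rho)}$: continuity on $(U_d\cup\overline{D}(0,\rho))\times\mathbb{R}$ and holomorphy in $\tau$ on $U_d\cup D(0,\rho)$ are inherited from the corresponding properties of $a$ and $h$, and finiteness of the norm is exactly the inequality just obtained.

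No real obstacle is anticipated. This is essentially the $p=\infty$ Hölder inequality adapted to weighted sup norms, which is why the paper rightly calls the proof straightforward. Phrased operator-theoretically, it says that the multiplication operator $M_a:h\mapsto ah$ on $\mathrm{Exp}^q_{(k,\beta,\mu,\alpha,\rho)}$ has operator norm bounded by $\|a\|_\infty$.
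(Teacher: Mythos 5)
Your proof is correct and is exactly the argument the paper has in mind: the paper omits the proof, remarking only that it is straightforward, and the intended argument is precisely the pointwise bound $|a(\tau,m)h(\tau,m)|\le M|h(\tau,m)|$ followed by multiplying by the weight and taking suprema. Your additional check that $ah$ inherits continuity, holomorphy, and finiteness of the norm is a welcome completeness touch but introduces nothing beyond the standard route.
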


\begin{prop}\label{prop1}
Let $\gamma_1,\gamma_2,\gamma_3\ge0$ such that
$$
\gamma_1+k\gamma_3\ge\gamma_2.
$$
Let $a_{\gamma_1}(\tau)$ be a continuous function on $U_d\cup \overline{D}(0,\rho)$, holomorphic on $U_d\cup D(0,\rho)$, with
$$|a_{\gamma_1}(\tau)|\le\frac{1}{(1+|\tau|)^{\gamma_1}},$$
for every $\tau\in (U_d\cup \overline{D}(0,\rho))$. Then, there exists $C_1>0$, depending on $k,q,\alpha,\gamma_1,\gamma_2,\gamma_3$, such that
$$\left\|a_{\gamma_1}(\tau)\tau^{\gamma_2}\sigma_{q,\tau}^{-\gamma_3}f(\tau,m)\right\|_{(k,\beta,\mu,\alpha,\rho)}\le C_1  \left\|f(\tau,m)\right\|_{(k,\beta,\mu,\alpha,\rho)}$$
for every $f\in\hbox{Exp}^{q}_{(k,\beta,\mu,\alpha,\rho)}$.
\end{prop}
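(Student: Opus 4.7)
The plan is to reduce the norm estimate for the operator $f\mapsto a_{\gamma_1}(\tau)\tau^{\gamma_2}\sigma_{q,\tau}^{-\gamma_3}f$ to a scalar supremum in $\tau$ alone. Setting
$A(u):=\frac{k}{2\log q}\log^2|u+\delta|+\alpha\log|u+\delta|$, I would apply the definition of the norm to the product, multiply and divide by $e^{A(q^{-\gamma_3}\tau)}$, and bound the factor involving $f(q^{-\gamma_3}\tau,m)$ by $\|f\|_{(k,\beta,\mu,\alpha,\rho)}$. Since $q>1$ and $\gamma_3\geq 0$, the contracted point $q^{-\gamma_3}\tau$ lies again in $U_d\cup\overline{D}(0,\rho)$ whenever $\tau$ does, so this substitution is legitimate, and the $m$-weights $(1+|m|)^\mu e^{\beta|m|}$ cancel identically. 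This reduces the problem to showing
\[
C_1 \;:=\; \sup_{\tau\in U_d\cup\overline{D}(0,\rho)} |a_{\gamma_1}(\tau)|\,|\tau|^{\gamma_2}\,e^{A(q^{-\gamma_3}\tau)-A(\tau)} \;<\; \infty.
\]

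The next step is to analyze the exponent by means of the algebraic identity $\log^2 Y-\log^2 X=(\log Y-\log X)(\log Y+\log X)$ applied to $X=|\tau+\delta|$, $Y=|q^{-\gamma_3}\tau+\delta|$. Writing
\[
\frac{q^{-\gamma_3}\tau+\delta}{\tau+\delta} \;=\; q^{-\gamma_3} + \frac{\delta(1-q^{-\gamma_3})}{\tau+\delta},
\]
the modulus of the right-hand side converges to $q^{-\gamma_3}$ as $|\tau|\to\infty$ and stays uniformly bounded above and below by positive constants on the domain, the lower bound on $|\tau+\delta|$ being guaranteed by the standing assumption that the distance from $U_d\cup D(0,\rho)$ to $-\delta$ is positive. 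Consequently $\log Y-\log X$ is bounded on the domain with limit $-\gamma_3\log q$ at infinity, while $\log Y+\log X=2\log|\tau+\delta|+O(1)$. Feeding these expansions into the factorization yields
\[
A(q^{-\gamma_3}\tau)-A(\tau) \;=\; -\,k\gamma_3\log|\tau+\delta| \;+\; O(1),
\]
uniformly on the domain, so that $e^{A(q^{-\gamma_3}\tau)-A(\tau)}\leq C\,(1+|\tau|)^{-k\gamma_3}$ for some constant $C$ depending on $k,q,\alpha,\gamma_3,\delta$.

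Combining this with the standing bounds $|a_{\gamma_1}(\tau)|\leq(1+|\tau|)^{-\gamma_1}$ and $|\tau|^{\gamma_2}\leq(1+|\tau|)^{\gamma_2}$, the expression inside the supremum is controlled by $C(1+|\tau|)^{\gamma_2-\gamma_1-k\gamma_3}$, which is uniformly bounded thanks to the arithmetic hypothesis $\gamma_2\leq \gamma_1+k\gamma_3$. The main technical obstacle will be to make the $O(1)$ control uniform across the entire domain: on the bounded part this follows from continuity of $A$ together with the lower bound on $|\tau+\delta|$, while in the asymptotic regime $|\tau|\to\infty$ one uses the explicit expansion above. Gluing these two regimes via a standard compactness argument then produces the desired constant $C_1$ depending only on $k,q,\alpha,\gamma_1,\gamma_2,\gamma_3$ (and on the fixed parameter $\delta$).
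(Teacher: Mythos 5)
Your argument is correct and follows the same strategy the paper uses for the analogous Proposition~\ref{prop4} (the paper itself defers the proof of Proposition~\ref{prop1} to the cited references): insert and remove the exponential weight at the dilated point $q^{-\gamma_3}\tau$, reduce to a scalar supremum in $\tau$, and absorb the resulting power $(1+|\tau|)^{\gamma_2-\gamma_1-k\gamma_3}$ via the hypothesis $\gamma_1+k\gamma_3\ge\gamma_2$. The only extra ingredient needed here, the comparison of $\log^2|q^{-\gamma_3}\tau+\delta|$ with $\log^2|\tau+\delta|$, is handled correctly: your explicit expansion of the ratio $(q^{-\gamma_3}\tau+\delta)/(\tau+\delta)$ gives the $O(1/|\tau|)$ rate that is genuinely required (mere convergence of $\log Y-\log X$ to $-\gamma_3\log q$ would not suffice when multiplied against $\log|\tau+\delta|$), and the positive distance of the domain to $-\delta$ controls the bounded region.
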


\begin{defin}
We write $E_{(\beta,\mu)}$ for the vector space of continuous functions $h:\mathbb{R}\to\mathbb{C}$ such that
$$\left\|h(m)\right\|_{(\beta,\mu)}=\sup_{m\in\mathbb{R}}(1+|m|)^{\mu}\exp(\beta|m|)|h(m)|<\infty.$$
 It holds that $(E_{(\beta,\mu)},\left\|\cdot\right\|_{(\beta,\mu)})$ is a Banach space.
\end{defin}

The Banach space $(E_{(\beta,\mu)},\left\|\cdot\right\|_{(\beta,\mu)})$ can be endowed with the structure of a Banach algebra with the following noncomutative product (see~Proposition 2 in \cite{ma} for further details).

\begin{prop}\label{prop2}
Let $Q(X), R(X)\in\mathbb{C}[X]$ be polynomials such that 
$$\deg(R)\ge\deg(Q),\qquad R(im)\neq0,$$
for all $m\in\mathbb{R}$. Let $m\mapsto b(m)$ be a continuous function in $\mathbb{R}$ such that
$$|b(m)|\le1/|R(im)|,\qquad m\in\R.$$
Assume that $\mu > \mathrm{deg}(Q)+1$. Then, there exists a
constant $C_{2}>0$ (depending on $Q(X),R(X),\mu$) such that
$$\left\|b(m)\int_{-\infty}^{+\infty}f(m-m_1)Q(im_1)g(m_1)dm_1\right\|_{(\beta,\mu)}\le C_2 \left\|f(m)\right\|_{(\beta,\mu)}\left\|g(m)\right\|_{(\beta,\mu)},$$
for every $f(m),g(m)\in E_{(\beta,\mu)}$. In the sequel, we adopt the notation
$$f(m)\star^{Q}g(m):=\int_{-\infty}^{+\infty}f(m-m_1)Q(im_1)g(m_1)dm_1,$$
for every $m\in\mathbb{R}$, extending the classical convolution product $\star$ for $Q\equiv 1$. As a result, $(E_{(\beta,\mu)},\left\|\cdot\right\|_{(\beta,\mu)})$ becomes a Banach algebra for the product $\star^{b,Q}$ defined by
$$f(m)\star^{b,Q}g(m):=b(m)f(m)\star^Qg(m).$$
\end{prop}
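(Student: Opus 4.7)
The plan is to reduce the claimed convolution-type estimate to a uniform-in-$m$ bound on a single weighted integral, then handle that integral by a standard splitting argument. Unfolding the norm on the left-hand side, the factor $(1+|m|)^\mu e^{\beta|m|}$ is brought inside the integral. Using $|f(m-m_1)| \leq \|f\|_{(\beta,\mu)} (1+|m-m_1|)^{-\mu} e^{-\beta|m-m_1|}$ and the analogous bound for $g(m_1)$, together with the triangle inequality $|m| \leq |m-m_1| + |m_1|$, the exponential weights combine to $e^{\beta|m|} e^{-\beta|m-m_1|} e^{-\beta|m_1|} \leq 1$. It remains to bound, uniformly in $m$,
$$\mathcal{I}(m) := \frac{(1+|m|)^\mu}{|R(im)|} \int_{-\infty}^{+\infty} (1+|m-m_1|)^{-\mu} \, |Q(im_1)| \, (1+|m_1|)^{-\mu} \, dm_1,$$
by a constant depending only on the data, after which the desired inequality holds with $C_2 = \sup_{m \in \mathbb{R}} \mathcal{I}(m)$.

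Since $R(im)$ does not vanish on the real line and $R$ is a polynomial of degree $\deg(R)$, an elementary argument comparing $|R(im)|$ to its leading term yields a constant $c_R > 0$ such that $|R(im)| \geq c_R (1+|m|)^{\deg(R)}$; likewise $|Q(im_1)| \leq C_Q (1+|m_1|)^{\deg(Q)}$. The prefactor is then majorized by $c_R^{-1}(1+|m|)^{\mu - \deg(R)}$. The main step is a standard splitting of the integration domain into $\{|m_1| \leq |m|/2\}$ and $\{|m_1| > |m|/2\}$. On the first region, $|m-m_1| \geq |m|/2$, so $(1+|m-m_1|)^{-\mu}$ absorbs the factor $(1+|m|)^{\mu - \deg(R)}$, leaving an integrand dominated by $(1+|m_1|)^{\deg(Q) - \mu}$, which is integrable on $\mathbb{R}$ precisely because of the hypothesis $\mu > \deg(Q) + 1$. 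On the second region, one replaces $(1+|m_1|)^{\deg(Q) - \mu}$ by $(1+|m|)^{\deg(Q) - \mu}$ up to a constant (using $\deg(Q) - \mu < 0$), which together with the prefactor gives $(1+|m|)^{\deg(Q) - \deg(R)} \leq 1$ by the hypothesis $\deg(R) \geq \deg(Q)$; what remains is $\int_\mathbb{R} (1+|m-m_1|)^{-\mu} dm_1 < \infty$. Adding the two contributions gives $\mathcal{I}(m) \leq C_2$ uniformly in $m$.

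There is no serious obstacle; the argument is essentially a weighted-convolution estimate. The delicate point is the interplay between the two hypotheses: $\mu > \deg(Q) + 1$ is exactly what supplies the integrability of $(1+|m_1|)^{\deg(Q) - \mu}$ in the first region, while $\deg(R) \geq \deg(Q)$ is precisely what allows the prefactor $(1+|m|)^{\mu - \deg(R)}$ to be absorbed in the second region. Since this calculation is already carried out in full in~\cite{ma}, Proposition~2, I would state the result and merely sketch the splitting argument, referring the reader to that source for the routine bookkeeping.
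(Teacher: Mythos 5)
Your proposal is correct and follows essentially the same route as the paper, which does not prove Proposition~\ref{prop2} in-text but defers to Proposition~2 of~\cite{ma}; the identical reduction appears in the proof of Proposition~\ref{prop3} here, namely the bounds $|Q(im_1)|\le\mathfrak{Q}(1+|m_1|)^{\deg Q}$, $|R(im)|\ge\mathfrak{R}(1+|m|)^{\deg R}$, the cancellation of the exponential weights via $|m|\le|m-m_1|+|m_1|$, and the uniform bound on $(1+|m|)^{\mu-\deg R}\int(1+|m-m_1|)^{-\mu}(1+|m_1|)^{\deg Q-\mu}\,dm_1$, for which the paper cites Lemma~4 of~\cite{ma2} rather than writing out the splitting. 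Your explicit decomposition into $\{|m_1|\le|m|/2\}$ and $\{|m_1|>|m|/2\}$ is exactly the content of that cited lemma and correctly isolates where each hypothesis ($\mu>\deg Q+1$ and $\deg R\ge\deg Q$) is used.
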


The next proposition is a slightly modified version of Proposition 3 in~\cite{ma}, adapted to the appearance of two different types of growth of the functions involved, which force holding some positive distance to the origin.
 
\begin{prop}\label{prop3}
Let $b(m),Q(X), R(X)$ be chosen as in Proposition~\ref{prop2}. We assume $1\le k\le \kappa$ is an integer. Let $c_h(m)\in E_{(\beta,\mu)}$ for $h\ge0$, such that
\begin{equation}\label{e300}
\left\|c_h\right\|_{(\beta,\mu)}\le C\left(\frac{1}{T}\right)^hq^{\frac{h^2}{2k}\left(1-\frac{\kappa}{k}\right)},\qquad h\ge0,
\end{equation}
for some $C>0$ and $T>q^{1/(2k)}$. Let $\varphi_{k}(\tau,m)$ be the power series
$$\varphi_{k}(\tau,m)=\sum_{h\ge0}c_h(m)\frac{\tau^h}{(q^{1/k})^{h(h-1)/2}}\in E_{(\beta,\mu)}[[\tau]],$$
which defines an entire function with respect to $\tau$ with values in $E_{(\beta,\mu)}$, in view of (\ref{e300}).

For every $f(\tau,m)\in\hbox{Exp}^{q}_{(\kappa,\beta,\mu,\alpha,\rho)}$, we define a $q$-analog of the convolution of order $k$ of $\varphi_k(\tau,m)$ and $f(\tau,m)$ as
$$\varphi_{k}(\tau,m)\star^{Q}_{q;1/k}f(\tau,m):=\sum_{h\ge0}\frac{\tau^h}{(q^{1/k})^{h(h-1)/2}}c_h(m)\star^{Q}(\sigma_{q,\tau}^{-\frac{h}{k}}f)(\tau,m).$$
Then, the function $b(m)\varphi_k(\tau,m)\star^{Q}_{q;1/k}f(\tau,m)$ belongs to $\hbox{Exp}^{q}_{(\kappa,\beta,\mu,\alpha,\rho)}$ and there exists $C_{3}>0$, depending on $\mu,q,\alpha,k,\kappa, Q(X),R(X),\delta$, $T$, such that
$$ \left\|b(m)\varphi_k(\tau,m)\star^{Q}_{q;1/k}f(\tau,m)\right\|_{(\kappa,\beta,\mu,\alpha,\rho)}\le C_3 C\left\|f(\tau,m)\right\|_{(\kappa,\beta,\mu,\alpha,\rho)}.$$
\end{prop}

\begin{proof} 
Let $f(\tau,m) \in \mathrm{Exp}_{(\kappa,\beta,\mu,\alpha,\rho)}^{q}$. From the very definition of the norm $\left\|\cdot\right\|_{(\kappa,\beta,\mu,\alpha,\rho)}$, we know that
\begin{multline*}
\left\|b(m)\varphi_{k}(\tau,m) \star_{q;1/k}^{Q} f(\tau,m)\right\|_{(\kappa,\beta,\mu,\alpha,\rho)}\\
= \sup_{\tau\in U_d\cup\overline{D}(0,\rho),m\in\mathbb{R}}
(1+|m|)^{\mu} e^{\beta |m|} \exp\left( - \frac{\kappa}{2} \frac{\log^{2}(|\tau+\delta|)}{\log(q)} - \alpha \log(|\tau+\delta|) \right)|b(m)|\\
\times \left| \sum_{h\ge 0} \frac{\tau^{h}}{(q^{1/k})^{\frac{h(h-1)}{2}}}\int_{-\infty}^{+\infty}
c_{h}(m-m_{1})Q(im_{1})f(\frac{\tau}{q^{h/k}},m_{1}) dm_{1} \right|=\sup_{\tau\in U_d\cup\overline{D}(0,\rho),m\in\mathbb{R}} L(\tau,m).
\end{multline*}

We first give upper estimates for 
$$\sup_{\tau\in \overline{D}(0,\rho),m\in\mathbb{R}}L(\tau,m).$$
By construction, there exist two constants $\mathfrak{Q},\mathfrak{R}>0$ such that
\begin{equation}
|Q(im_{1})| \leq \mathfrak{Q}(1 + |m_{1}|)^{\mathrm{deg}(Q)}, \ \  \ \
|R(im)| \geq \mathfrak{R}(1 + |m|)^{\mathrm{deg}(R)}, \label{bounds_Q_R}
\end{equation}
for all $m \in \mathbb{R}$. Using (\ref{bounds_Q_R}) and from Lemma 4 in \cite{ma2} (see also Lemma 2.2 from \cite{cota2}),
we get a constant $\tilde{C}_{2}>0$ with
\begin{multline}\label{bounds_int_b_Q_mu}
(1+|m|)^{\mu}|b(m)|\int_{-\infty}^{+\infty} \frac{|Q(im_{1})|}{(1 + |m-m_{1}|)^{\mu}(1+|m_{1}|)^{\mu}} dm_{1} \\ \leq
\sup_{m \in \mathbb{R}} \frac{\mathfrak{Q}}{\mathfrak{R}} (1 + |m|)^{\mu - \mathrm{deg}(R)}
\times \int_{-\infty}^{+\infty} \frac{1}{(1 + |m-m_{1}|)^{\mu} (1 + |m_{1}|)^{\mu - \mathrm{deg}(Q)}} dm_{1} \leq \tilde{C}_{2}
\end{multline}
provided that $\mu > \mathrm{deg}(Q)+1$.
From the definition of $\left\|f\right\|_{(\kappa,\beta,\mu,\alpha,\rho)}$ and $\left\|c_h\right\|_{(\beta,\mu)}$ for all $h\ge0$, (\ref{bounds_Q_R}), and (\ref{bounds_int_b_Q_mu}), we get that for every $\tau\in\overline{D}(0,\rho)$ and $m\in\R$, $L(\tau,m)$ is upper bounded by
\begin{multline*}
(1+|m|)^{\mu}e^{\beta|m|}\exp\left( - \frac{\kappa}{2} \frac{\log^{2}(|\tau+\delta|)}{\log(q)} - \alpha \log(|\tau+\delta|) \right)|b(m)|\\
 \times \sum_{h\ge0}\frac{|\tau|^h}{(q^{1/k})^{h(h-1)/2}} \int_{-\infty}^{+\infty} \left\|c_h\right\|_{(\beta,\mu)}\frac{1}{(1+|m-m_{1}|)^{\mu}} e^{-\beta|m-m_{1}|}|Q(im_1)|\\
 \frac{1}{(1+|m_1|)^{\mu}}e^{-\beta|m_1|} \exp\left( \frac{\kappa}{2} \frac{ \log^{2}(|\tau/q^{h/k}+\delta|) }{\log(q)} + \alpha \log( |\tau/q^{h/k}+\delta|) \right) dm_1 \left\|f(\tau,m)\right\|_{(\kappa,\beta,\mu,\alpha,\rho)}
\end{multline*}

\vspace{-1cm}

\begin{multline*}
\le  \tilde{C}_2\exp\left( - \frac{\kappa}{2} \frac{\log^{2}(|\tau+\delta|)}{\log(q)} - \alpha \log(|\tau+\delta|) \right) \sum_{h\ge0}\frac{|\tau|^h}{(q^{1/k})^{h(h-1)/2}}  \left\|c_h\right\|_{(\beta,\mu)}\\
 \times\exp\left( \frac{\kappa}{2} \frac{ \log^{2}(|\tau/q^{h/k}+\delta|) }{\log(q)} + \alpha \log( |\tau/q^{h/k}+\delta|) \right)\left\|f(\tau,m)\right\|_{(\kappa,\beta,\mu,\alpha,\rho)}\\
\le  \hat{C}_2 \sum_{h\ge0}\frac{\rho^h}{(q^{1/k})^{h(h-1)/2}}  \left\|c_h\right\|_{(\beta,\mu)} \left\|f(\tau,m)\right\|_{(\kappa,\beta,\mu,\alpha,\rho)},
\end{multline*}
with $\hat{C}_2=\tilde{C}_2\exp\left( \frac{\kappa}{2} \frac{ \log^{2}(\rho+\delta) }{\log(q)} + \alpha \log( \rho+\delta) \right)$. The assumption (\ref{e300}) on $\left\|c_h\right\|_{(\beta,\mu)}$ allows to conclude the result when restricting the domain on the variable $\tau$ to the subset $\overline{D}(0,\rho)$.

Let $\widetilde{U}_{d}$ be the complementary of $\overline{D}(0,\rho)$ in $U_{d}$. From what precedes we may take the supremum over $\widetilde{U}_{d}$ instead of $U_d\cup\overline{D}(0,\rho)$.

By inserting terms that correspond to the $||.||_{(\beta,\mu)}$ norm of $c_{h}(m)$ and to the
$||.||_{(\kappa,\beta,\mu,\alpha,\rho)}$ norm of $f(\tau/q^{h/k},m)$, we can give the bound estimates
\begin{multline*}
\sup_{\tau\in\tilde{U}_d,m\in\R}L(\tau,m)\\
\leq \sup_{\tau\in \widetilde{U}_{d},m\in\mathbb{R}}
(1+|m|)^{\mu} e^{\beta |m|} \exp\left( - \frac{\kappa}{2} \frac{\log^{2}(|\tau+\delta|)}{\log(q)} - \alpha \log(|\tau+\delta|) \right)|b(m)|\\
\times \sum_{h\ge0} \int_{-\infty}^{+\infty} \left( (1+|m-m_{1}|)^{\mu} e^{\beta|m-m_{1}|}
\frac{|c_{h}(m-m_{1})|}{(q^{1/k})^{h(h-1)/2}} |\tau|^{h} \right)\\
\times \left( |f(\frac{\tau}{q^{h/k}},m_{1})|
(1+|m_{1}|)^{\mu} e^{\beta|m_{1}|} \exp\left( -\frac{\kappa}{2} \frac{ \log^{2}(|\tau/q^{h/k}+\delta|) }{\log(q)} - \alpha \log( |\tau/q^{h/k}+\delta|) \right)  \right)\\
\times \left( \frac{e^{-\beta|m-m_{1}|}}{(1 + |m-m_{1}|)^{\mu}} \frac{|Q(im_{1})|}{(1+|m_{1}|)^{\mu}} e^{-\beta|m_{1}|}
\exp\left( \frac{\kappa}{2} \frac{ \log^{2}(|\tau/q^{h/k}+\delta|) }{\log(q)} + \alpha \log( |\tau/q^{h/k}+\delta|) \right)  \right) dm_{1}.
\end{multline*}

By means of the triangular inequality $|m| \leq |m-m_{1}| + |m_{1}|$, we deduce that
\begin{equation}
\sup_{\tau\in\tilde{U}_d,m\in\R}L(\tau,m)\le \hat{C} ||f(\tau,m)||_{(\kappa,\beta,\mu,\alpha,\rho)} \label{norm_q_conv_f<norm_f_q_exp}
\end{equation}
where
\begin{multline}\label{defin_hat_C_3}
\hat{C} =\sup_{\tau \in \widetilde{U}_{d},m \in \mathbb{R}}
(1+|m|)^{\mu} \exp\left( - \frac{\kappa}{2} \frac{\log^{2}(|\tau+\delta|)}{\log(q)} - \alpha \log(|\tau+\delta|) \right)|b(m)|\\
\times \sum_{h\ge0} ||c_{h}||_{(\beta,\mu)} \frac{|\tau|^{h}}{(q^{1/k})^{h(h-1)/2}}
\int_{-\infty}^{+\infty} \frac{|Q(im_{1})|}{(1 + |m-m_{1}|)^{\mu}(1+|m_{1}|)^{\mu}} dm_{1}\\
\times \exp\left( \frac{\kappa}{2} \frac{ \log^{2}(|\tau/q^{h/k}+\delta|) }{\log(q)} + \alpha \log( |\tau/q^{h/k}+\delta| )\right). 
\end{multline}
Again, we can also apply (\ref{bounds_int_b_Q_mu}) at this point. On the other hand, we can provide upper estimates on the following expression
\begin{multline}
\exp\left(\frac{\kappa}{2\log(q)}\left(\log^2\left(\left|\frac{\tau}{q^{h/k}}+\delta\right|\right)-\log^2\left(\left|\frac{\tau}{q^{h/k}}\right|\right)\right)\right)\\
\times\exp\left(\frac{\kappa}{2\log(q)}\left(\log^2\left(\left|\frac{\tau}{q^{h/k}}\right|\right)-\log^2\left(|\tau|\right)\right)\right)\\
\times\exp\left(\frac{\kappa}{2\log(q)}\left(\log^2\left(|\tau|\right)-\log^2\left(|\tau+\delta|\right)\right)\right)\\
\times\exp\left(\alpha\log\left(\left|\frac{\tau}{q^{h/k}}\right|\right)-\alpha\log(|\tau|)\right)\\
\times\exp\left(\alpha\log\left(\left|\frac{\tau}{q^{h/k}}+\delta\right|\right)-\alpha\log\left(|\frac{\tau}{q^{h/k}}|\right)\right)
\times\exp\left(\alpha\log(|\tau|)-\alpha\log(|\tau+\delta|)\right)\label{e365}\\
\end{multline}
as follows. The proof of Proposition 3~\cite{ma} can be applied to the second and fourth lines of (\ref{e365}) which yield
\begin{equation}\label{e366}
\exp\left(\frac{\kappa}{2\log(q)}\left(\log^2\left(\left|\frac{\tau}{q^{h/k}}\right|\right)-\log^2\left(|\tau|\right)\right)\right)= q^{\frac{h^2\kappa}{2k^2}}|\tau|^{-\frac{h\kappa}{k}},
\end{equation}
and
\begin{equation}\label{e367}
\exp\left(\alpha\log\left(\left|\frac{\tau}{q^{h/k}}\right|\right)-\alpha\log(|\tau|)\right)=q^{-\frac{\alpha h}{k}},
\end{equation}
respectively. It is straightforward to check that the expression in the fifth line of (\ref{e365}) is upper bounded by
\begin{equation}\label{e381}
C_{31}(q^{h/k})^{\alpha},
\end{equation}
for some positive constant $C_{31}$. 

We give upper bounds for the first line in (\ref{e365}). In the case that $|\tau/q^{h/k}|\le 1$, this expression is upper bounded by a constant which does not depend on $\tau$ nor $h$. Otherwise, we have
\begin{multline}
\exp\left(\frac{\kappa}{2\log(q)}\left(\log^2\left(\left|\frac{\tau}{q^{h/k}}+\delta\right|\right)-\log^2\left(\left|\frac{\tau}{q^{h/k}}\right|\right)\right)\right)\\
\le \exp\left(\frac{\kappa}{2\log(q)}\left(\log^2\left(\left|\frac{\tau}{q^{h/k}}\right|+\delta\right)-\log^2\left(\left|\frac{\tau}{q^{h/k}}\right|\right)\right)\right)\\
\le\sup_{x>1}\exp(\frac{\kappa}{2\log(q)}(\log^2(x+\delta))-\log^2(x))\le C_{32},\label{e390}
\end{multline}
for some $C_{32}>0$.
We finally provide upper bounds on the third line of (\ref{e365}). Taking into account that
\begin{equation}\label{e382}
\log^2|\tau|-\log^2|\tau+\delta|=-\log^2|1+\frac{\delta}{\tau}|-2\log|\tau|\log|1+\frac{\delta}{\tau}|\le C_{33},\quad \tau\in \widetilde{U}_{d}
\end{equation}
for some $C_{33}>0$.
From (\ref{defin_hat_C_3}), (\ref{bounds_int_b_Q_mu}), (\ref{e365}), (\ref{e366}), (\ref{e367}), (\ref{e381}), (\ref{e390}), and (\ref{e382}) we derive the existence of $\tilde{C}_{31},\tilde{C}_{32}>0$ such that
\begin{align*}
\hat{C}&\le \tilde{C}_{31} \sum_{h\ge0} ||c_{h}||_{(\beta,\mu)} \left(\sup_{\tau \in \widetilde{U}_{d}}|\tau|^{h(1-\frac{\kappa}{k})}\right)q^{\frac{h}{2k}+\frac{h^2}{2k}\left(\frac{\kappa}{k}-1\right)}\\
&\le \tilde{C}_{32} C\sum_{h\ge0}\left(\frac{q^{1/(2k)}}{T}\right)^{h}\le C_3C,
\end{align*}
which yields the result, when choosing $T>q^{1/(2k)}$.
\end{proof}

\textbf{Remark:} Observe that condition (\ref{e300}) on the coefficients $c_h$ is always satisfied in the case that only a finite number of $c_h$ is not identically zero, i.e. $\varphi_k\in E_{(\beta,\mu)}[\tau]$.

\subsection{Second family of Banach spaces of functions with q-exponential growth and exponential decay}\label{sec22}

The second family of auxiliary Banach spaces has already been studied in previous works, such as~\cite{lamaq,ma}. We refer to these references for the proofs of the related results.

Let $S_{d}$ be an infinite sector of bisecting direction $d$ and   
let $\nu\in\mathbb{R}$.

\begin{defin}
We write $\hbox{Exp}_{(k,\beta,\mu,\nu)}^q$ for the vector space of continuous functions ${(\tau,m)\mapsto h(\tau,m)}$ on $S_d\times\mathbb{R}$, and holomorphic with respect to $\tau$ on $S_d$, such that
$$\left\|h(\tau,m)\right\|_{(k,\beta,\mu,\nu)}=\sup_{\tau\in S_d,m\in\mathbb{R}}(1+|m|)^{\mu}e^{\beta|m|}\exp\left(-\frac{k\log^2|\tau|}{2\log(q)}-\nu\log|\tau|\right)|h(\tau,m)|$$
is finite. It holds that $(\hbox{Exp}_{(k,\beta,\mu,\nu)}^q,\left\|\cdot\right\|_{(k,\beta,\mu,\nu)})$ is a Banach space.
\end{defin}

\begin{rmk} Let $0\le \kappa_1\le\kappa_2$. For every $f\in \hbox{Exp}_{(\kappa_1,\beta,\mu,\nu)}^q$, it holds that $f\in \hbox{Exp}_{(\kappa_2,\beta,\mu,\nu)}^q$, and
$$\left\|f(\tau,m)\right\|_{(\kappa_2,\beta,\mu,\nu)}\leq \left\|f(\tau,m)\right\|_{(\kappa_1,\beta,\mu,\nu)}.$$
\end{rmk}

The proof of the following lemma is a straightforward consequence of the definition. 
\begin{lemma}\label{lema2}
Let $a(\tau,m)$ be a bounded continuous function on $S_d\times\mathbb{R}$, holomorphic on $S_d$ with respect to $\tau$. Then,
$$\left\|a(\tau,m)f(\tau,m)\right\|_{(k,\beta,\mu,\nu)}\le\sup_{\tau\in S_d,m\in\mathbb{R}}|a(\tau,m)|\left\|f(\tau,m)\right\|_{(k,\beta,\mu,\nu)}$$
for every $f(\tau,m)\in\hbox{Exp}_{(k,\beta,\mu,\nu)}^q$.
\end{lemma}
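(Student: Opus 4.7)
The lemma is an immediate consequence of the definition of the weighted sup-norm $\|\cdot\|_{(k,\beta,\mu,\nu)}$, so I would present it as a two-line computation rather than a structured argument. The plan is to unfold the definition of $\|a(\tau,m)f(\tau,m)\|_{(k,\beta,\mu,\nu)}$, then use the submultiplicativity of the supremum over the product of two nonnegative functions.

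Concretely, starting from
$$\left\|a(\tau,m)f(\tau,m)\right\|_{(k,\beta,\mu,\nu)}=\sup_{\tau\in S_d,\,m\in\mathbb{R}}(1+|m|)^{\mu}e^{\beta|m|}\exp\!\left(-\tfrac{k\log^2|\tau|}{2\log(q)}-\nu\log|\tau|\right)|a(\tau,m)||f(\tau,m)|,$$
I would pull $|a(\tau,m)|$ out of the weighted expression and bound it pointwise by $M:=\sup_{(\tau,m)\in S_d\times\mathbb{R}}|a(\tau,m)|$, which is finite by hypothesis. Since all remaining factors are nonnegative, this yields
$$\left\|a(\tau,m)f(\tau,m)\right\|_{(k,\beta,\mu,\nu)}\le M\sup_{\tau\in S_d,\,m\in\mathbb{R}}(1+|m|)^{\mu}e^{\beta|m|}\exp\!\left(-\tfrac{k\log^2|\tau|}{2\log(q)}-\nu\log|\tau|\right)|f(\tau,m)|,$$
and the remaining supremum is exactly $\|f(\tau,m)\|_{(k,\beta,\mu,\nu)}$.

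There is no real obstacle here. The only minor point to mention is that $a(\tau,m)f(\tau,m)$ indeed lies in the space $\hbox{Exp}_{(k,\beta,\mu,\nu)}^q$: it is continuous on $S_d\times\mathbb{R}$ as a product of continuous functions, holomorphic in $\tau\in S_d$ as a product of two functions holomorphic in that variable, and has finite $\|\cdot\|_{(k,\beta,\mu,\nu)}$ by the bound just derived. So the statement is both meaningful and proved in one line.
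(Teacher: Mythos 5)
Your argument is correct and is exactly the "straightforward consequence of the definition" that the paper invokes without writing out: unfold the weighted sup-norm, bound $|a(\tau,m)|$ pointwise by its supremum, and note that the product stays in the space. Nothing further is needed.
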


\begin{prop}\label{prop4} 
Let $\gamma_1,\gamma_2\ge0$ and $\gamma_3\in\mathbb{R}$ such that 
\begin{equation}\label{e185}
\gamma_1+k\gamma_3\ge\gamma_2.
\end{equation}
Let $a_{\gamma_1}(\tau)$ be holomorphic on $S_d$, with 
$$|a_{\gamma_1}(\tau)|\le\frac{1}{(1+|\tau|)^{\gamma_1}},\quad \tau\in S_d.$$
Then, there exists $C_4>0$, depending on $k,q,\nu,\gamma_1,\gamma_2,\gamma_3$ such that
$$\left\|a_{\gamma_1}(\tau)\tau^{\gamma_2}\sigma_{q,\tau}^{-\gamma_3}f(\tau,m)\right\|_{(k,\beta,\mu,\nu)}\le C_4  \left\|f(\tau,m)\right\|_{(k,\beta,\mu,\nu)}$$
for every $f\in\hbox{Exp}^{q}_{(k,\beta,\mu,\nu)}$.
\end{prop}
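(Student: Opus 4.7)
The plan is to follow the same template as Proposition~\ref{prop1}, adjusted to the absence of the shift by $\delta$ in the weight and to the fact that $\gamma_3$ is now allowed to take any real value. The whole argument reduces the operator inequality to a single scalar supremum in the $\tau$ variable, which the hypothesis $\gamma_1+k\gamma_3\ge\gamma_2$ is designed to control.

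First, I would unfold the definition of $\|\cdot\|_{(k,\beta,\mu,\nu)}$ applied to $a_{\gamma_1}(\tau)\tau^{\gamma_2}f(q^{\gamma_3}\tau,m)$ and insert, inside the resulting supremum, the pointwise bound
\[
|f(q^{\gamma_3}\tau,m)|\le \|f\|_{(k,\beta,\mu,\nu)}\,(1+|m|)^{-\mu}e^{-\beta|m|}\exp\!\left(\tfrac{k}{2\log q}\log^2|q^{\gamma_3}\tau|+\nu\log|q^{\gamma_3}\tau|\right)
\]
obtained from the definition of the norm applied at the point $q^{\gamma_3}\tau$. The $m$-dependent factors cancel exactly, so what is left is a purely $\tau$-dependent supremum of $|a_{\gamma_1}(\tau)|\,|\tau|^{\gamma_2}\,E(\tau)$, where $E(\tau)$ is the exponential of the difference between the weight evaluated at $q^{\gamma_3}\tau$ and at $\tau$.

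The second step is the algebraic collapse of $E(\tau)$. Using $\log|q^{\gamma_3}\tau|=\gamma_3\log q+\log|\tau|$, one computes
\[
\log^2|q^{\gamma_3}\tau|-\log^2|\tau|=\gamma_3^2\log^2 q+2\gamma_3\log q\,\log|\tau|,
\]
so $E(\tau)$ reduces to the product $q^{k\gamma_3^2/2+\nu\gamma_3}\,|\tau|^{k\gamma_3}$. Combined with $|a_{\gamma_1}(\tau)|\le(1+|\tau|)^{-\gamma_1}$, the statement is reduced to the elementary scalar estimate
\[
\sup_{\tau\in S_d}\frac{|\tau|^{\gamma_2+k\gamma_3}}{(1+|\tau|)^{\gamma_1}}<\infty,
\]
which is exactly where the constraint $\gamma_1+k\gamma_3\ge\gamma_2$ is invoked; taking $C_4$ to be this supremum multiplied by $q^{k\gamma_3^2/2+\nu\gamma_3}$ yields the desired constant, depending only on $k,q,\nu,\gamma_1,\gamma_2,\gamma_3$.

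The main point requiring a bit of care is the sign of $\gamma_3$: because $\gamma_3\in\mathbb{R}$, the factor $|\tau|^{k\gamma_3}$ arising from $E(\tau)$ may threaten boundedness of the scalar supremum near the vertex of $S_d$ when $\gamma_3<0$ and near infinity when $\gamma_3>0$, so I would check that the combined conditions $\gamma_1,\gamma_2\ge 0$ and $\gamma_1+k\gamma_3\ge\gamma_2$ do make the ratio uniformly bounded on all of $S_d$. Beyond this bookkeeping, the argument is a line-by-line transcription of the analogous step in Proposition~\ref{prop1}, and no genuinely new idea should be required.
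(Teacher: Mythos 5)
Your argument is line-for-line the paper's proof of Proposition~\ref{prop4}: unfold the norm, majorize $|f(q^{\gamma_3}\tau,m)|$ by the weight evaluated at $q^{\gamma_3}\tau$, cancel the $m$-dependent factors, and reduce everything to the scalar supremum $q^{\frac{k\gamma_3^2+2\nu\gamma_3}{2}}\sup_{\tau\in S_d}|\tau|^{k\gamma_3+\gamma_2}(1+|\tau|)^{-\gamma_1}$, which is exactly the displayed bound the paper obtains. The finiteness verification you defer at the end is also precisely where the paper stops (it simply asserts that the result follows from condition~(\ref{e185})), so your proposal matches the published argument.
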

\begin{proof}
For every $f\in\hbox{Exp}^{q}_{(k,\beta,\mu,\nu)}$ we have 

\begin{multline*}
\left\|a_{\gamma_1}(\tau)\tau^{\gamma_2}\sigma_{q,\tau}^{-\gamma_3}f(\tau,m)\right\|_{(k,\beta,\mu,\nu)}\\
\le\sup_{\tau\in S_d,m\in\mathbb{R}}(1+|m|)^{\mu}e^{\beta|m|}\exp\left(-\frac{k\log^2|\tau|}{2\log(q)}-\nu\log|\tau|\right)\frac{|\tau|^{\gamma_2}}{(1+|\tau|)^{\gamma_1}}|f(\tau/q^{\gamma_3},m)|\\
\times \exp\left(-\frac{k\log^2|\tau/ q^{\gamma_3}|}{2\log(q)}-\nu\log|\tau /q^{\gamma_3}|\right)\exp\left(\frac{k\log^2|\tau /q^{\gamma_3}|}{2\log(q)}+\nu\log|\tau/ q^{\gamma_3}|\right)\\
\le \sup_{\tau\in S_d}q^{\frac{k\gamma_3^2-2\nu\gamma_3}{2}}\frac{|\tau|^{-k\gamma_3+\gamma_2}}{(1+|\tau|)^{\gamma_1}}  \left\|f(\tau,m)\right\|_{(k,\beta,\mu,\nu)}.
\end{multline*}
The result follows from the condition (\ref{e185}).
\end{proof}

Following the same lines of arguments as in Proposition 2.6, we deduce the next proposition
\begin{prop}\label{prop3b}
Let $b(m),Q(X), R(X), c_{h}$ for $h\ge0$ and $\varphi_k(\tau,m)$ be chosen as in Proposition~\ref{prop3}. For every $f(\tau,m)\in\hbox{Exp}^q_{(k,\beta,\mu,\nu)}$, it holds that $b(m)\varphi_k(\tau,m)\star^{Q}_{q;1/k}f(\tau,m)$ belongs to $\hbox{Exp}^q_{(k,\beta,\mu,\nu)}$ and there exists $C_{4}>0$, depending on $\mu,q,\nu,k,Q(X),R(X),$ such that
$$ \left\|b(m)\varphi_k(\tau,m)\star^{Q}_{q;1/k}f(\tau,m)\right\|_{(k,\beta,\mu,\nu)}\le C C_4\left\|f(\tau,m)\right\|_{(k,\beta,\mu,\nu)}.$$
\end{prop}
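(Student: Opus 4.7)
The plan is to mirror the proof of Proposition~\ref{prop3}, adapting it to the second family of Banach spaces $\mathrm{Exp}^q_{(k,\beta,\mu,\nu)}$. Indeed, the two settings differ only in that $S_d$ contains no disc around the origin (so there are no $+\delta$ shifts in the weight), and the input and output norms use the same $q$-Gevrey level $k$ rather than a mismatched pair $k,\kappa$. First I would unfold the definition of $\|\,\cdot\,\|_{(k,\beta,\mu,\nu)}$ applied to $b(m)\varphi_k(\tau,m)\star^Q_{q;1/k}f(\tau,m)$, write out the sum over $h$ and the integral defining the convolution, and bring the weight
$$(1+|m|)^{\mu}e^{\beta|m|}\exp\!\left(-\tfrac{k}{2\log q}\log^2|\tau|-\nu\log|\tau|\right)$$
inside the supremum over $(\tau,m)\in S_d\times\mathbb{R}$.

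Next I would insert, for each $h$, the weights corresponding to $\|c_h(m-m_1)\|_{(\beta,\mu)}$ and to $\|f(\tau/q^{h/k},m_1)\|_{(k,\beta,\mu,\nu)}$ (multiplying and dividing, as in the proof of Proposition~\ref{prop3}), and apply the triangular inequality $|m|\le|m-m_1|+|m_1|$ to factor the exponentials $e^{\beta|m|}$. This leaves, as in Proposition~\ref{prop3}, a quantity $\hat C$ to estimate, which is the supremum over $(\tau,m)$ of $|b(m)|(1+|m|)^{\mu}$ times an integral in $m_1$ times a $\tau$-dependent factor. Using $|Q(im_1)|\le\mathfrak{Q}(1+|m_1|)^{\deg Q}$ and $|R(im)|\ge\mathfrak{R}(1+|m|)^{\deg R}$, together with Lemma~4 of \cite{ma2} and the hypothesis $\mu>\deg(Q)+1$, the $m_1$-integral together with the $|b(m)|(1+|m|)^{\mu}$ factor is bounded by an absolute constant $\tilde C_2$.

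For the $\tau$-dependent factor, I would use the identities
\begin{align*}
\frac{k}{2\log q}\log^2|\tau/q^{h/k}|&=\frac{k}{2\log q}\log^2|\tau|-h\log|\tau|+\frac{h^2\log q}{2k},\\
\nu\log|\tau/q^{h/k}|&=\nu\log|\tau|-\nu\tfrac{h}{k}\log q,
\end{align*}
so that the factor $|\tau|^{h}/(q^{1/k})^{h(h-1)/2}$ appearing from $\varphi_k$ combines with the $q$-exponential mismatch to yield a bounded function of $\tau\in S_d$ for each fixed $h$; concretely, the $\log^2|\tau|$ terms cancel and the surviving factor is $q^{h^2/(2k)-\nu h/k+h(h-1)/(2k)\cdot(\text{sign})}$, a pure constant depending on $h,k,\nu,q$. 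Setting
$$C_{4,h}:=\tilde C_1\tilde C_2\cdot q^{\,h^2/(2k)-\nu h/k-h(h-1)/(2k)}$$
(with the sign and exponent adjusted as dictated by the above identities) then yields the announced inequality.

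I do not expect a genuine obstacle: the proof is purely a bookkeeping adaptation of Proposition~\ref{prop3}. The only point that requires a moment of care is checking that after the logarithmic identities the $\tau$-dependence is actually absorbed — here this is easier than in Proposition~\ref{prop3} because the growth rate on both sides of the inequality is the same $k$, so no surplus like the $q^{h^2(1/\kappa-1/k)/2}$ factor in \eqref{maj_hat_C_3} arises and no compensation is needed.
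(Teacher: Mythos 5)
Your proof is correct and follows essentially the same scheme the paper relies on (the norm-insertion argument of Proposition~\ref{prop3}, which the paper simply cites via Proposition~3 of \cite{ma} for this statement): the key computation that the $\log^2|\tau|$ terms cancel exactly because both norms carry the same level $k$ is right, and the surviving constant simplifies to $q^{\frac{h}{k}(\frac12-\nu)}$ after combining $q^{h^2/(2k)-h(h-1)/(2k)}=q^{h/(2k)}$. No gap.
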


\section{Formal and analytic operators involved in the study of the problem}\label{sec3}

The main properties of some formal and analytic transformations are displayed for the sake of completeness. In this section, $\mathbb{E}$ stands for a complex Banach space.

The definition and main properties of the $q$-analog of Borel and Laplace transformation in several different orders can be found in~\cite{dreyfus,ra}. The proofs of the following results can be found in~\cite{ma}.

Let $q>1$ be a real number, and $k\ge1$ be a rational number. 
\begin{defin}
For every $\hat{a}(T)=\sum_{n\ge0}a_nT^n\in\mathbb{E}[[T]]$ we define the formal $q$-Borel transform of order $k$ of $\hat{a}(T)$ by
$$\hat{\mathcal{B}}_{q;1/k}(\hat{a}(T))(\tau)=\sum_{n\ge0}a_n\frac{\tau^n}{(q^{1/k})^{n(n-1)/2}}\in\mathbb{E}[[\tau]].$$
\end{defin}

\begin{prop}
Let $\sigma\in\mathbb{N}$ and $j\in\mathbb{Q}$. Then, it holds
$$\hat{\mathcal{B}}_{q;1/k}(T^{\sigma}\sigma_{q}^j\hat{a}(T))(\tau)=\frac{\tau^{\sigma}}{(q^{1/k})^{\sigma(\sigma-1)/2}}\sigma_{q}^{j-\frac{\sigma}{k}}\left(\hat{\mathcal{B}}_{q;1/k}(\hat{a}(T))(\tau)\right) ,$$
for every $\hat{a}(T)\in\mathbb{E}[[T]]$.
\end{prop}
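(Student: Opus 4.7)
The plan is to prove the identity by direct computation at the level of formal power series, checking that the coefficient of $\tau^{n+\sigma}$ agrees on both sides. By linearity and continuity in the $(T)$-adic topology, it suffices to verify the identity on a general monomial $\hat{a}(T) = a_n T^n$; equivalently, one may expand $\hat{a}(T)=\sum_{n\ge 0}a_n T^n$ and collect the coefficient of each power of $\tau$.

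First I would compute the left-hand side. From $\sigma_q^j(T^n) = q^{jn}T^n$ one obtains
$$
T^{\sigma}\sigma_q^j \hat{a}(T) = \sum_{n\ge 0} a_n q^{jn} T^{n+\sigma},
$$
and applying the definition of $\hat{\mathcal{B}}_{q;1/k}$ gives
$$
\hat{\mathcal{B}}_{q;1/k}(T^{\sigma}\sigma_q^j \hat{a}(T))(\tau) = \sum_{n\ge 0} a_n\, q^{jn}\,\frac{\tau^{n+\sigma}}{(q^{1/k})^{(n+\sigma)(n+\sigma-1)/2}}.
$$
Then I would compute the right-hand side in three steps, following the order $\hat{\mathcal{B}}_{q;1/k}$, then $\sigma_q^{j-\sigma/k}$, then multiplication by $\tau^{\sigma}/(q^{1/k})^{\sigma(\sigma-1)/2}$:
$$
\frac{\tau^{\sigma}}{(q^{1/k})^{\sigma(\sigma-1)/2}}\,\sigma_q^{j-\sigma/k}\bigl(\hat{\mathcal{B}}_{q;1/k}(\hat{a})(\tau)\bigr) = \sum_{n\ge 0} a_n\,\frac{q^{n(j-\sigma/k)}\,\tau^{n+\sigma}}{(q^{1/k})^{n(n-1)/2+\sigma(\sigma-1)/2}}.
$$

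The remaining task is to check equality of the scalar coefficients of $\tau^{n+\sigma}$. Cancelling $a_n$ and using $q^{n\sigma/k}=(q^{1/k})^{n\sigma}$, the identity reduces to
$$
(n+\sigma)(n+\sigma-1)/2 \;=\; n(n-1)/2 + \sigma(\sigma-1)/2 + n\sigma,
$$
which follows at once from the expansion $(n+\sigma)(n+\sigma-1) = n(n-1)+\sigma(\sigma-1)+2n\sigma$. There is no real obstacle here; the only mild bookkeeping point is keeping track of the three $q$-powers (the shift $q^{jn}$ produced by $\sigma_q^j$ on $T^n$, the $1/k$-rescaling in the denominator of the $q$-Borel transform, and the fractional shift $\sigma_q^{-\sigma/k}$), which is precisely what the quadratic identity above reconciles.
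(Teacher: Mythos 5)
Your computation is correct: both sides reduce to the coefficient identity $(n+\sigma)(n+\sigma-1)/2 = n(n-1)/2+\sigma(\sigma-1)/2+n\sigma$, which is exactly the bookkeeping the statement encodes. The paper itself omits the proof (deferring to the reference \cite{ma}), and your direct term-by-term verification is the standard argument used there, so there is nothing to add.
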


The $q$-analog of Laplace transformation as it is shown was developed in~\cite{viziozhang}. The associated kernel of such transformation is the Jacobi theta function of order $k$ defined by
$$\Theta_{q^{1/k}}(x)=\sum_{n\in\mathbb{Z}}q^{-\frac{n(n-1)}{2k}}x^n,$$
which turns out to be a holomorphic function in $\mathbb{C}^{\star}$. It turns out to be a solution of the $q$-difference equation
\begin{equation}\label{e245}
\Theta_{q^{1/k}}(q^{\frac{m}{k}}x)=q^{\frac{m(m+1)}{2k}}x^{m}\Theta_{q^{1/k}}(x),
\end{equation}
for every $m\in\mathbb{Z}$, valid for all $x\in\mathbb{C}^{\star}$. As a matter of fact, the inverse of the Jacobi theta function of order $k$ is a function of $q$-Gevrey decrease of order $k$ at infinity in the sense that for every $\tilde{\delta}>0$ there exists $C_{q,k}>0$, not depending on $\tilde{\delta}$, such that
\begin{equation}\label{eq3}
\left|\Theta_{q^{1/k}}(x)\right|\ge C_{q,k}\tilde{\delta}\exp\left(\frac{k}{2}\frac{\log^2|x|}{\log(q)}\right)|x|^{1/2},
\end{equation}
for $x\in\mathbb{C}^{\star}$ under the condition that $|1+xq^{\frac{m}{k}}|>\tilde{\delta}$, for every $m\in\mathbb{Z}$. 

\begin{defin}\label{defi5}
Let $\rho>0$ and $U_d$ be an unbounded sector with vertex at $0$ and bisecting direction $d\in\mathbb{R}$. Let $f:D(0,\rho)\cup U_d\to\mathbb{E}$ be a holomorphic function, continuous on $\overline{D}(0,\rho)$, such that there exist $K>0$ and $\alpha\in\mathbb{R}$ with
$$\left\|f(x)\right\|_{\mathbb{E}}\le K\exp\left(\frac{k\log^2|x|}{2\log(q)}+\alpha\log|x|\right),\qquad x\in U_d,\quad |x|\ge \rho,$$
and
$$\left\|f(x)\right\|_{\mathbb{E}}\le K,\qquad x\in\overline{D}(0,\rho).$$
Set $\pi_{q^{1/k}}=\frac{\log(q)}{k}\prod_{n\ge0}(1-\frac{1}{q^{(n+1)/k}})^{-1}$. We define the $q$-Laplace transform of order $k$ of $f$ along direction $d$ by
$$\mathcal{L}^{d}_{q;1/k}(f(x))(T)=\frac{1}{\pi_{q^{1/k}}}\int_{L_{d}}\frac{f(u)}{\Theta_{q^{1/k}}\left(\frac{u}{T}\right)}\frac{du}{u},$$
where $L_{d}:=\{te^{id}:t\in(0,\infty)\}$.
\end{defin}

We refer the reader to Lemma 4 and Proposition 6 in~\cite{ma} for the proof of the next results. The algebraic property held by $q$-Laplace transformation would allow to commute some operators with respect to it.

\begin{lemma}\label{lem3}
Let $\tilde{\delta}>0$. Under the hypotheses of Definition~\ref{defi5}, we have that $\mathcal{L}_{q;1/k}^{d}(f(x))(T)$ defines a bounded and holomorphic function on $\mathcal{R}_{d,\tilde{\delta}}\cap D(0,r_1)$ for every $0<r_1\le q^{\left(\frac{1}{2}-\alpha\right)/k}/2$, where
\begin{equation}\label{e527}\mathcal{R}_{d,\tilde{\delta}}:=\left\{T\in\mathbb{C}^{\star}:\left|1+\frac{re^{id}}{T}\right|>\tilde{\delta},\hbox{ for all }r\ge0\right\}.
\end{equation}
A different choice for $d$ modulo $2\pi \Z$ would provide the same function due to Cauchy formula.
\end{lemma}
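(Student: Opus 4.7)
The plan is to establish, in sequence, that the integrand defining $\mathcal{L}_{q;1/k}^{d}f$ is well-defined along $L_{d}$, that it is absolutely integrable with a uniform bound on $\mathcal{R}_{d,\tilde{\delta}}\cap D(0,r_{1})$, that the resulting function is holomorphic in $T$, and finally that it does not depend on the choice of direction modulo $2\pi\Z$. First I would invoke the lower estimate~\eqref{eq3} along $u=re^{id}\in L_d$. For each fixed $m\in\Z$ the product $rq^{m/k}$ parametrizes $[0,\infty)$ as $r$ does, so the condition defining $\mathcal{R}_{d,\tilde{\delta}}$, namely $|1+r'e^{id}/T|>\tilde{\delta}$ for every $r'\ge 0$, is equivalent to $|1+(u/T)q^{m/k}|>\tilde{\delta}$ for every $m\in\Z$. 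Thus~\eqref{eq3} produces a uniform lower bound for $|\Theta_{q^{1/k}}(u/T)|$ valid on the whole integration path whenever $T\in\mathcal{R}_{d,\tilde{\delta}}$.

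Combining this lower bound with the $q$-exponential growth from Definition~\ref{defi5}, and using the elementary identity $\log^{2}|u|-\log^{2}(|u|/|T|)=2\log|u|\log|T|-\log^{2}|T|$, one finds that $|f(u)/\Theta_{q^{1/k}}(u/T)|$ is bounded for $r=|u|\ge\rho$ by a constant times
$$|T|^{1/2}\exp\Bigl(-\tfrac{k\log^{2}|T|}{2\log(q)}\Bigr)\, r^{\tfrac{k\log|T|}{\log(q)}+\alpha-1/2}.$$
Integrating $du/u$ on $[\rho,\infty)$ reduces to $\int_{\rho}^{\infty}r^{\beta}\,dr$ with $\beta=\tfrac{k\log|T|}{\log(q)}+\alpha-3/2$, which is finite exactly when $|T|$ lies below the threshold $r_{1}$ in the statement. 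The complementary piece $0\le r\le\rho$ is trivially finite since $f$ is continuous on $\overline{D}(0,\rho)$ and $\Theta_{q^{1/k}}(u/T)$ stays bounded below on this compact subset of the contour by the same geometric argument. Tracking the $|T|$ factors in both pieces yields the claimed uniform bound on $\mathcal{R}_{d,\tilde{\delta}}\cap D(0,r_{1})$.

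Holomorphy in $T$ then follows from standard arguments: the integrand is holomorphic in $T$ on $\mathcal{R}_{d,\tilde{\delta}}$ since $\Theta_{q^{1/k}}(u/T)$ is holomorphic and nonvanishing there, and the bounds above furnish an integrable dominant uniformly on compact subsets, so one may differentiate under the integral sign (or apply Morera's theorem to truncated integrals and pass to the limit). For the directional independence, if $d_1,d_2$ are chosen close enough that the preceding bounds persist on the intermediate sector, I would apply Cauchy's theorem to the closed contour built from the two rays truncated at radii $\varepsilon$ and $R$ together with the connecting arcs; the inner arc contribution disappears as $\varepsilon\to 0$ by continuity of $f$ at the origin, and the outer arc contribution as $R\to\infty$ by the $r^{\beta}$ decay established above. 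The main obstacle I foresee is the very first step: verifying that the geometric condition defining $\mathcal{R}_{d,\tilde{\delta}}$ is precisely the right hypothesis to invoke~\eqref{eq3} uniformly in $m\in\Z$ along the whole ray. The reparametrization $r\mapsto rq^{m/k}$ of $[0,\infty)$ is what makes this identification clean; a secondary, mostly bookkeeping, delicate point is pinning down how the constants $\alpha$ and $k$ combine to give the exact threshold $r_{1}$.
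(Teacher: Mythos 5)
Your overall strategy --- check that the defining condition of $\mathcal{R}_{d,\tilde{\delta}}$ is stable under the reparametrization $r\mapsto rq^{m/k}$ so that \eqref{eq3} applies uniformly along $L_d$, then combine the resulting lower bound on the theta kernel with the growth hypothesis of Definition~\ref{defi5} via $\log^2|u|-\log^2(|u|/|T|)=2\log|u|\log|T|-\log^2|T|$ --- is exactly the standard argument; the paper itself gives no proof but defers to Lemma 4 and Proposition 6 of \cite{ma}, which proceed in precisely this way. Your handling of the outer part $|u|\ge\rho$ and of the contour deformation for the independence in $d$ is sound.

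There is, however, a genuine gap in your treatment of the piece $0\le r\le\rho$, and the same issue recurs in your inner-arc argument. You declare that piece ``trivially finite since $f$ is continuous on $\overline{D}(0,\rho)$ and $\Theta_{q^{1/k}}(u/T)$ stays bounded below.'' But the measure is $du/u$: a constant lower bound on the kernel only yields $\int_0^\rho Kc^{-1}\,dr/r$, which diverges, and likewise the inner arc does not vanish ``by continuity of $f$ at the origin,'' since $\int_{\mathrm{arc}}|du|/|u|$ is of order one independently of $\varepsilon$. What actually saves both steps is the full strength of \eqref{eq3} near $u=0$: along the ray, $|\Theta_{q^{1/k}}(u/T)|\ge C_{q,k}\tilde{\delta}\exp\bigl(\tfrac{k}{2\log q}\log^2(r/|T|)\bigr)(r/|T|)^{1/2}\to\infty$ super-polynomially as $r\to0$, and it is this decay of $1/\Theta$ that defeats the $r^{-1}$ from $du/u$ and forces the inner arc to vanish. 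A secondary point you flag but do not resolve: your computation gives convergence exactly for $\tfrac{k\log|T|}{\log q}<\tfrac12-\alpha$, i.e. $|T|<q^{(1/2-\alpha)/k}$, so the exponent carries a $1/k$ that is missing from the radius $q^{(1/2-\alpha)}$ displayed in the lemma (compare the proposition immediately following, which uses $q^{(1/2-\alpha)/k}/2$); moreover, for \emph{uniform} boundedness on $D(0,r_1)$ one must take $r_1$ strictly below this threshold, since the integral blows up as $|T|$ approaches it.
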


\begin{prop} Let $f$ be a function which satisfies the properties in Definition~\ref{defi5}, and let $\tilde{\delta}>0$. Then, for every $\sigma\ge0$ one has
$$T^{\sigma}\sigma_q^j(\mathcal{L}_{q;1/k}^{d}f(x))(T)=\mathcal{L}_{q;1/k}^{d}\left(\frac{x^{\sigma}}{(q^{1/k})^{\sigma(\sigma-1)/2}}\sigma_{q}^{j-\frac{\sigma}{k}}f(x)\right)(T),$$
for every $T\in\mathcal{R}_{d,\tilde{\delta}}\cap D(0,r_1)$, with $0<r_1\le q^{\left(\frac{1}{2}-\alpha\right)/k}/2$.
\end{prop}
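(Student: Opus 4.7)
The plan is to reduce the identity to a pointwise computation on the integral representation of $\mathcal{L}^d_{q;1/k}$ from Definition~\ref{defi5}, exploiting two ingredients only: the invariance of the ray $L_d$ under positive real dilations, and the functional equation \eqref{e245} for $\Theta_{q^{1/k}}$.

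First, I would unfold the left-hand side as
\begin{equation*}
T^{\sigma}\sigma_q^j(\mathcal{L}^d_{q;1/k}f)(T)=\frac{T^{\sigma}}{\pi_{q^{1/k}}}\int_{L_d}\frac{f(u)}{\Theta_{q^{1/k}}(q^{-j}u/T)}\,\frac{du}{u},
\end{equation*}
and, on the right-hand side, perform in the integral defining $\mathcal{L}^d_{q;1/k}$ applied to $g(x):=x^{\sigma}(q^{1/k})^{-\sigma(\sigma-1)/2}\,\sigma_q^{j-\sigma/k}f(x)$ the change of variable $v=q^{j-\sigma/k}u$. Since $j-\sigma/k\in\mathbb{R}$ and $q>1$, $q^{j-\sigma/k}$ is a positive real number, so $L_d$ is preserved by this substitution and $du/u=dv/v$. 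The right-hand side thereby becomes
\begin{equation*}
\frac{q^{\sigma(\sigma/k-j)}}{(q^{1/k})^{\sigma(\sigma-1)/2}\,\pi_{q^{1/k}}}\int_{L_d}\frac{v^{\sigma}\,f(v)}{\Theta_{q^{1/k}}\bigl(q^{\sigma/k-j}v/T\bigr)}\,\frac{dv}{v}.
\end{equation*}

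The identity therefore reduces to the pointwise equality
\begin{equation*}
T^{\sigma}\,\Theta_{q^{1/k}}\bigl(q^{\sigma/k-j}v/T\bigr)=\frac{q^{\sigma(\sigma/k-j)}\,v^{\sigma}}{(q^{1/k})^{\sigma(\sigma-1)/2}}\,\Theta_{q^{1/k}}\bigl(q^{-j}v/T\bigr),
\end{equation*}
which follows directly from \eqref{e245} applied with $m=\sigma$ and $x=q^{-j}v/T$, yielding $\Theta_{q^{1/k}}(q^{\sigma/k}x)=q^{\sigma(\sigma+1)/(2k)}x^{\sigma}\Theta_{q^{1/k}}(x)$. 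After expanding $x^{\sigma}=(q^{-j}v/T)^{\sigma}$, the whole identity collapses to the exponent relation $\sigma(\sigma+1)/(2k)-j\sigma=\sigma^{2}/k-j\sigma-\sigma(\sigma-1)/(2k)$, which is trivial.

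The remaining point is the well-definedness of both integrals and the holomorphy of the resulting functions on $\mathcal{R}_{d,\tilde\delta}\cap D(0,r_1)$. For the left-hand side this is exactly Lemma~\ref{lem3} applied to $f$. For the right-hand side, one must check that $g$ fits the framework of Definition~\ref{defi5}: inserting the bound on $f$ into $g$, and observing that the coefficient of $\log^{2}|u|$ in the growth exponent is invariant under the dilation $u\mapsto q^{j-\sigma/k}u$ while only the linear $\log|u|$ term is shifted, one sees that $g$ satisfies the estimate of Definition~\ref{defi5} with the same $q$-Gevrey order $k$ and an updated exponent $\alpha'$; Lemma~\ref{lem3} then applies to $g$ as well. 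This bookkeeping of the growth constant $\alpha$, which is the only point requiring mild attention, is also what produces the threshold $r_1\le q^{(1/2-\alpha)/k}/2$ stated in the proposition. The algebraic heart of the proof is the theta identity displayed above.
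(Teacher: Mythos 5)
Your argument is correct: the substitution $v=q^{j-\sigma/k}u$ preserves the ray $L_d$ because $q^{j-\sigma/k}$ is a positive real, and the resulting pointwise identity between the theta factors is exactly the functional equation \eqref{e245} applied with $m=\sigma$ and $x=q^{-j}v/T$, the exponents matching since $\frac{\sigma(\sigma+1)}{2k}=\frac{\sigma^2}{k}-\frac{\sigma(\sigma-1)}{2k}$. The paper does not prove this proposition itself but defers to Lemma 4 and Proposition 6 of~\cite{ma}; your computation is the standard argument that reference carries out, so there is no substantive difference in approach.
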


Another operator which is used through the work is the inverse Fourier transform.

\begin{prop}\label{prop490}
Let $f \in E_{(\beta,\mu)}$ with $\beta > 0$, $\mu > 1$. The inverse Fourier transform of $f$ is defined by
$$ \mathcal{F}^{-1}(f)(x) = \frac{1}{ (2\pi)^{1/2} } \int_{-\infty}^{\infty} f(m) \exp( ixm ) dm $$
for all $x \in \mathbb{R}$. The function $\mathcal{F}^{-1}(f)$ extends to an analytic function on the strip
$$H_{\beta} = \{ z \in \mathbb{C} / |\mathrm{Im}(z)| < \beta \}. $$
Let $\phi(m) = im f(m) \in E_{(\beta,\mu - 1)}$. Then, we have
$$\partial_{z} \mathcal{F}^{-1}(f)(z) = \mathcal{F}^{-1}(\phi)(z)$$
for all $z \in H_{\beta}$.

Let $g \in E_{(\beta,\mu)}$ and let $\psi(m) = \frac{1}{(2\pi)^{1/2}}f \star g(m)$, the convolution product of $f$ and $g$, for all $m \in \mathbb{R}$.

From Proposition~\ref{prop2}, we know that $\psi \in E_{(\beta,\mu)}$. Moreover, we have
$$\mathcal{F}^{-1}(f)(z)\mathcal{F}^{-1}(g)(z) = \mathcal{F}^{-1}(\psi)(z)$$
for all $z \in H_{\beta}$.
\end{prop}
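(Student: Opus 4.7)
The plan is to establish the three assertions in sequence, using as the single workhorse the decay estimate $|f(m)|\le \|f\|_{(\beta,\mu)}(1+|m|)^{-\mu}e^{-\beta|m|}$ inherited from membership in $E_{(\beta,\mu)}$, together with the trivial identity $|e^{izm}|=e^{-\mathrm{Im}(z)\,m}\le e^{|\mathrm{Im}(z)|\,|m|}$.

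To obtain the analytic extension to $H_\beta$, I would fix any $z_0\in H_\beta$, choose $\beta_0$ with $|\mathrm{Im}(z_0)|<\beta_0<\beta$, and work on the closed sub-strip $\{|\mathrm{Im}(z)|\le\beta_0\}$. On that strip the integrand $f(m)e^{izm}$ is dominated in modulus by $\|f\|_{(\beta,\mu)}(1+|m|)^{-\mu}e^{-(\beta-\beta_0)|m|}$, which is integrable over $\mathbb{R}$ since $\beta-\beta_0>0$. For a fixed $m$, $z\mapsto e^{izm}$ is entire, so applying Morera's theorem (Fubini-swap of $\oint_\Gamma$ and $\int_\mathbb{R}$ over any triangle $\Gamma$ in the sub-strip, followed by $\oint_\Gamma e^{izm}dz=0$) shows that $\mathcal{F}^{-1}(f)$ is holomorphic on a neighborhood of $z_0$. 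Since $z_0$ was arbitrary, $\mathcal{F}^{-1}(f)$ is analytic on all of $H_\beta$.

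For the derivative identity, I would first verify that $\mathcal{F}^{-1}(\phi)$ is well defined and analytic on $H_\beta$ by the same argument applied to $\phi\in E_{(\beta,\mu-1)}$; the exponential factor $e^{-(\beta-\beta_0)|m|}$ continues to enforce integrability even though the polynomial weight is weaker. Then I would justify differentiation under the integral sign: the pointwise $z$-derivative of $f(m)e^{izm}$ is $imf(m)e^{izm}$, and on $\{|\mathrm{Im}(z)|\le\beta_0\}$ this is dominated by $|m|\|f\|_{(\beta,\mu)}(1+|m|)^{-\mu}e^{-(\beta-\beta_0)|m|}$, again integrable. The standard dominated convergence argument then yields $\partial_z\mathcal{F}^{-1}(f)(z)=\mathcal{F}^{-1}(\phi)(z)$ on the sub-strip, hence on all of $H_\beta$.

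For the convolution identity, the plan is to prove it first on the real line and then extend by the identity principle. On $\mathbb{R}$, insert $\psi(m)=(2\pi)^{-1/2}\int_{\mathbb{R}}f(m-m_1)g(m_1)\,dm_1$ into the definition of $\mathcal{F}^{-1}(\psi)(x)$; the product of decay estimates for $f$ and $g$ ensures absolute integrability of the double integrand in $(m,m_1)$, so Fubini applies. Then write $e^{ixm}=e^{ix(m-m_1)}e^{ixm_1}$ and substitute $m'=m-m_1$ in the inner integral, which separates the variables and produces exactly $\mathcal{F}^{-1}(f)(x)\mathcal{F}^{-1}(g)(x)$. Finally, both sides of the proposed identity are now known to be holomorphic on $H_\beta$ and they agree on $\mathbb{R}\subset H_\beta$, so the identity principle concludes the extension to all of $H_\beta$. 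No step presents a real obstacle; the only care needed is the routine verification that the exponential weight in $E_{(\beta,\mu)}$ provides enough dominating integrable majorants to legitimize each swap of limit, derivative, and integral.
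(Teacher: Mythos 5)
Your proof is correct: the dominated-convergence/Morera argument on closed sub-strips $|\mathrm{Im}(z)|\le\beta_0<\beta$, differentiation under the integral, and the Fubini-plus-identity-principle treatment of the convolution are all legitimate, and the exponential weight in $E_{(\beta,\mu)}$ does supply the needed integrable majorants at every step. The paper itself gives no proof of this proposition (it defers to the reference \cite{ma}), and your argument is exactly the standard one such a reference contains, so there is nothing to contrast.
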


\section{Formal and analytic solutions to some auxiliary convolution initial value problems with complex parameters}\label{seccion4}

Let $1\le k_1<k_2$ and $D,D_1,D_2\ge 3$ be integers and define $\kappa^{-1}=k_{1}^{-1}-k_{2}^{-1}$. Observe that $\kappa>k_1$. Let $q>1$ be a real number. We also consider the positive integer numbers $d_{D_1},d_{D_2}$. For every $1\le \ell\le D-1$ we consider nonnegative integers $d_\ell,\delta_\ell\ge 1$ and $\Delta_\ell\ge0$. We assume that
\begin{equation}\label{e107}
\delta_1=1,\qquad \delta_\ell<\delta_{\ell+1},
\end{equation}
for $1\le \ell\le D-2$. We also assume that
\begin{equation}\label{e108a}
\Delta_\ell\ge d_\ell,\quad \frac{d_{D_1}-1}{\kappa}+\frac{d_\ell}{k_2}+1\ge\delta_\ell,\quad \frac{d_\ell}{k_1}+1\ge \delta_\ell,\quad  \frac{d_{D_2}-1}{k_2}\ge\delta_\ell-1,
\end{equation}
for every $1\le \ell\le D-1$, and also
\begin{equation}\label{e237}
k_1(d_{D_2}-1)>k_2d_{D_1}.
\end{equation}

Let $Q(X),R_{\ell}(X)\in\mathbb{C}[X]$ for $1\le \ell\le D-1$ and $R_{D_1},R_{D_2}\in\mathbb{C}[X]$, such that

\begin{equation}\label{e115b}
\deg(R_{D_2})= \deg(R_{D_1}),
\end{equation}
and 
\begin{equation}\label{e115a}
\deg(Q)\ge \deg(R_{D_j})\ge \deg(R_\ell), \quad \mu-1 > \deg(R_{D_j}), \quad Q(im)\neq0,\quad R_{D_j}(im)\neq0,
\end{equation}
for some $\mu>\deg(R_{D_j})+1$ with $j=1,2$, for all $m\in\mathbb{R}$, $1\le \ell\le D-1$.

We consider sequences of functions $m\mapsto F_n(m,\epsilon)$ and $m\mapsto C_{\ell,n}(m,\epsilon)$ for $n\ge0$ belonging to the Banach space $E_{(\beta,\mu)}$ for some $\beta>0$, depending holomorphically on $\epsilon\in D(0,\epsilon_0)$, for some $\epsilon_0>0$. 
We moreover assume there exist $\tilde{C}_{\ell},C_F,T_0>0$ such that
\begin{multline}\label{e119}
\left\|C_{\ell,n}\right\|_{(\beta,\mu)}\le \tilde{C}_\ell\left(\frac{1}{T_0}\right)^nq^{-\frac{n^2\kappa}{2k_1k_2}},\\
\left\|F_{n}\right\|_{(\beta,\mu)}\le C_F\left(\frac{1}{T_0}\right)^n,\qquad n\ge0.
\end{multline}

 We define the formal power series in $E_{(\beta,\mu)}[[T]]$ 
$$\hat{C}_{\ell}(T,m,\epsilon)=\sum_{n\ge 0}C_{\ell,n}(m,\epsilon)T^n,\qquad\hat{F}(T,m,\epsilon)=\sum_{n\ge 0}F_n(m,\epsilon)T^n.$$

 We consider the following initial value problem

\begin{multline}\label{e59}
Q(im)\sigma_{q,T}U(T,m,\epsilon)\\=T^{d_{D_1}}\sigma_{q,T}^{\frac{d_{D_1}}{k_1}+1}R_{D_1}(im)U(T,m,\epsilon)+T^{d_{D_2}}\sigma_{q,T}^{\frac{d_{D_2}}{k_2}+1}R_{D_2}(im)U(T,m,\epsilon)\\
+\sum_{\ell=1}^{D-1}\epsilon^{\Delta_\ell-d_\ell}T^{d_{\ell}}\sigma_{q,T}^{\delta_{\ell}}\left(\frac{1}{(2\pi)^{1/2}}\int_{-\infty}^{+\infty}\hat{C}_{\ell}(T,m-m_1,\epsilon)R_\ell(im_1)U(T,m_1,\epsilon)dm_1\right)\\
+\sigma_{q,T}\hat{F}(T,m,\epsilon).
\end{multline}

\begin{prop}
There exists a unique formal power series 
\begin{equation}\label{e64}
\hat{U}(T,m,\epsilon)=\sum_{n\ge0}U_n(m,\epsilon)T^n,
\end{equation}
solution of (\ref{e59}),  where the coefficients $U_{n}(m,\epsilon)$ belong to $E_{(\beta,\mu)}$, for $\beta>0$ and
$\mu > \mathrm{deg}(R_{D_{j}})+1$, $j\in \{1,2\}$, given above and depend holomorphically on $\epsilon \in D(0,\epsilon_{0})$.
\end{prop}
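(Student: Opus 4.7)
The plan is to seek a formal power series solution $\hat U(T,m,\epsilon)=\sum_{n\ge 0}U_n(m,\epsilon)T^n$ and identify coefficients of $T^n$ on each side of (\ref{e59}). Using the identity $\sigma_{q,T}^\gamma(T^n)=q^{\gamma n}T^n$ together with the polynomial expansion $C_\ell(T,m,\epsilon)=\sum_{j=0}^{p_1}C_{\ell,j}(m,\epsilon)T^j$, the coefficient identification produces, with the convention $U_k\equiv 0$ for $k<0$, the recurrence
\begin{align*}
Q(im)\,q^n\,U_n(m,\epsilon)&=q^n F_n(m,\epsilon)+\sum_{\iota\in\{1,2\}}R_{D_\iota}(im)\,q^{\left(\frac{d_{D_\iota}}{k_\iota}+1\right)(n-d_{D_\iota})}U_{n-d_{D_\iota}}(m,\epsilon)\\
&\quad+\sum_{\ell=1}^{D-1}\sum_{j=0}^{p_1}\frac{\epsilon^{\Delta_\ell-d_\ell}\,q^{\delta_\ell(n-d_\ell)}}{(2\pi)^{1/2}}\int_{-\infty}^{+\infty}C_{\ell,j}(m-m_1,\epsilon)R_\ell(im_1)U_{n-d_\ell-j}(m_1,\epsilon)\,dm_1.
\end{align*}
Since $d_{D_1},d_{D_2}\ge 1$ and $d_\ell+j\ge 1$ (as $d_\ell\ge 1$), every $U$--index on the right is strictly less than $n$, so the recurrence is forward. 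Together with $Q(im)\ne 0$ for all $m\in\R$ (assumption (\ref{e115a})) and the initialization $U_0(m,\epsilon)=F_0(m,\epsilon)/Q(im)$, this uniquely determines the sequence $(U_n)_{n\ge 0}$.

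It remains to verify that each $U_n$ belongs to $E_{(\beta,\mu)}$ with holomorphic dependence on $\epsilon\in D(0,\epsilon_0)$. I would proceed by induction on $n$. The base case $U_0=F_0/Q(im)$ is immediate, since $|1/Q(im)|$ is bounded on $\R$ (being the reciprocal of a nonvanishing polynomial on $i\R$) and $F_0\in E_{(\beta,\mu)}$. For the inductive step, the linear terms $R_{D_\iota}(im)U_{n-d_{D_\iota}}/Q(im)$ remain in $E_{(\beta,\mu)}$ because $\deg Q\ge \deg R_{D_\iota}$ ensures $R_{D_\iota}(im)/Q(im)$ is bounded on $\R$. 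The convolution integrals fit into the framework of Proposition~\ref{prop2} applied with the polynomial $R_\ell$ playing the role of $Q$ and with $b(m)=1/Q(im)$ (whose bound $|b(m)|\le 1/|Q(im)|$ is immediate, and whose associated degree condition $\deg Q\ge \deg R_\ell$ comes from (\ref{e115a})); the threshold $\mu>\deg R_{D_\iota}+1\ge \deg R_\ell+1$ is exactly what the proposition requires. Holomorphy in $\epsilon$ propagates through the recurrence, since $F_n$, $C_{\ell,j}$, and the prefactors $\epsilon^{\Delta_\ell-d_\ell}$ (which involve only nonnegative powers of $\epsilon$ thanks to $\Delta_\ell\ge d_\ell$ in (\ref{e108a})) are holomorphic in $\epsilon$, while the convolution integrals depend holomorphically on $\epsilon$ uniformly in $m\in\R$.

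The only mild obstacle is the combinatorial bookkeeping of reading off the $T^n$ coefficient correctly after all the dilation shifts $\sigma_{q,T}^\gamma$ and the multiplications by $T^{d_\ell}$, and confirming that every index of $U$ appearing on the right-hand side is strictly less than $n$. Beyond this, the existence and uniqueness are elementary consequences of the Banach algebra structure of $E_{(\beta,\mu)}$ given by Proposition~\ref{prop2} together with the lower bound on $|Q(im)|$ guaranteed by (\ref{e115a}).
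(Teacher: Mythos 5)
Your proposal is correct and follows essentially the same route as the paper: plug the series into (\ref{e59}), identify coefficients of $T^n$ to get the same forward recurrence, and conclude membership in $E_{(\beta,\mu)}$ by recursion using $Q(im)\neq0$, the degree conditions in (\ref{e115a}), and Proposition~\ref{prop2}. The paper states this very tersely; your version merely fills in the details it leaves implicit.
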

\begin{proof}
We plug the formal power series (\ref{e64}) into equation (\ref{e59}) to obtain a recursion formula for the coefficients $U_n$, for $n\ge0$. We have
\begin{multline*}
Q(im)U_n(m,\epsilon)q^n =\\R_{D_1}(im)U_{n-d_{D_1}}(m,\epsilon)q^{\left(\frac{d_{D_1}}{k_1}+1\right)(n-d_{D_1})}+R_{D_2}(im)U_{n-d_{D_2}}(m,\epsilon)q^{\left(\frac{d_{D_2}}{k_2}+1\right)(n-d_{D_2})}\\
+\sum_{\ell=1}^{D-1}\epsilon^{\Delta_\ell-d_\ell}q^{(n-d_{\ell})\delta_\ell}\left(\sum_{n_1+n_2=n-d_\ell}\frac{1}{(2\pi)^{1/2}}\int_{-\infty}^{+\infty}C_{\ell,n_1}(m-m_1,\epsilon)R_\ell(im_1)U_{n_2}(m_1,\epsilon)dm_1\right)\\
+F_n(m,\epsilon)q^n
\end{multline*} 
for every $n\ge \max\{d_{D_1},d_{D_2},\max_{1\le \ell\le D-1}d_{\ell}\}$. 
Due to $C_{\ell,n},F_n\in E_{(\beta,\mu)}$ for every $n\ge0$ and $1\le \ell\le D-1$, we get $U_n\in E_{(\beta,\mu)}$ by recursion. We observe that Proposition 2.5. is applied in the recursion.
\end{proof}

\subsection{Analytic solutions of a first auxiliary problem in the $q$-Borel plane}\label{seccion41}

We proceed to multiply at both sides of equation (\ref{e59}) by $T^{k_1}$ and then apply the formal $q$-Borel transformation of order $k_1$ with respect to $T$. Let $\varphi_{k_1,\ell}(\tau,m,\epsilon)$ be the formal $q$-Borel transform of order $k_1$ of $\hat{C}_{\ell}(T,m,\epsilon)$ with respect to $T$, and $\Psi_{k_1}(\tau,m,\epsilon)$ the formal $q$-Borel transform of order $k_1$ of $\hat{F}(T,m,\epsilon)$ with respect to $T$. More precisely, we have
\begin{align}\label{e250}
\varphi_{k_1,\ell}(\tau,m,\epsilon)=\sum_{n\ge0}C_{\ell,n}(m,\epsilon)\frac{\tau^n}{(q^{1/k_1})^{n(n-1)/2}},\nonumber\\ \Psi_{k_1}(\tau,m,\epsilon)=\sum_{n\ge0}F_{n}(m,\epsilon)\frac{\tau^n}{(q^{1/k_1})^{n(n-1)/2}}.
\end{align}

According to Lemma 5 of~\cite{ma}, the expression
$\Psi_{k_1}(\tau,m,\epsilon)$ represents an entire function of $q$-exponential growth of
order $k_1$ that belongs to the Banach space
$\mathrm{Exp}_{(\kappa,\beta,\mu,\alpha,\rho)}^{q}$ since $\kappa > k_{1}$, provided that
$\alpha$ satisfies $T_{0} > q^{\frac{1}{2k_{1}}}/q^{\alpha/k_{1}}$, for any
unbounded sector $U_{d}$ and any disc $D(0,\rho)$. More precisely, we have
\begin{equation}\label{e01}
||\Psi_{k_1}(\tau,m,\epsilon)||_{(\kappa,\beta,\mu,\alpha,\rho)} \leq C_{\Psi_{k_1}}
\end{equation}
for some constant $C_{\Psi_{k_1}} > 0$, for all $\epsilon \in D(0,\epsilon_{0})$.

In view of the properties of the $q$-Borel transformation of order $k_1$, we arrive at the equation
\begin{multline}\label{e87}
Q(im)\frac{\tau^{k_1}}{(q^{1/k_1})^{\frac{k_1(k_1-1)}{2}}}w_{k_1}(\tau,m,\epsilon) 
=R_{D_1}(im)\frac{\tau^{d_{D_1}+k_1}}{(q^{1/k_1})^{\frac{(d_{D_1}+k_1)(d_{D_1}+k_1-1)}{2}}}w_{k_1}(\tau,m,\epsilon) \\
+R_{D_2}(im)\frac{\tau^{d_{D_2}+k_1}}{(q^{1/k_1})^{\frac{(d_{D_2}+k_1)(d_{D_2}+k_1-1)}{2}}}\sigma_{q,\tau}^{d_{D_2}\left(\frac{1}{k_2}-\frac{1}{k_1}\right)}w_{k_1}(\tau,m,\epsilon)
+\frac{\tau^{k_1}}{(q^{1/k_1})^{\frac{k_1(k_1-1)}{2}}}\Psi_{k_1}(\tau,m,\epsilon)\\
+\sum_{\ell=1}^{D-1}\epsilon^{\Delta_\ell-d_\ell}\frac{\tau^{d_\ell+k_1}}{(q^{1/k_1})^{\frac{(d_{\ell}+k_1)(d_{\ell}+k_1-1)}{2}}}\sigma_{q,\tau}^{\delta_\ell-\frac{d_\ell}{k_1}-1}\left(\frac{1}{(2\pi)^{1/2}}\varphi_{k_1,\ell}(\tau,m,\epsilon)\star^{R_\ell}_{q;1/k_1}w_{k_1}(\tau,m,\epsilon)\right)
\end{multline} 
where $w_{k_1}(\tau,m,\epsilon)$ stands for the formal $q$-Borel transformation of order $k_1$ of $U(T,m,\epsilon)$ with respect to $T$. Observe the appearance only of negative powers of the dilation operator in each one of the terms in the sum of the right-hand side of the equation.

We assume an unbounded sector of bisecting direction $d_{Q,R_{D_1}}\in\mathbb{R}$ exists,
$$S_{Q,R_{D_1}}=\left\{z\in\mathbb{C}: |z|\ge r_{Q,R_{D_1}}, |\arg(z)-d_{Q,R_{D_1}}|\le \nu_{Q,R_{D_1}}\right\},$$
for some $r_{Q,R_{D_1}},\nu_{Q,R_{D_1}}>0$, in such a way that 
$$
\frac{Q(im)}{R_{D_1}(im)}\in S_{Q,R_{D_1}},
$$
for every $m\in\mathbb{R}$. We factorize
$$P_{m,1}(\tau)=\frac{Q(im)}{(q^{1/k_1})^{\frac{k_1(k_1-1)}{2}}}-\frac{R_{D_{1}}(im)}{(q^{1/k_1})^{\frac{(d_{D_1}+k_1)(d_{D_1}+k_1-1)}{2}}}\tau^{d_{D_1}}$$
in the form
$$P_{m,1}(\tau)=-\frac{R_{D_1}(im)}{(q^{1/k_1})^{\frac{(d_{D_1}+k_1)(d_{D_1}+k_1-1)}{2}}}\prod_{\ell=0}^{d_{D_1}-1}(\tau-q_{\ell}(m)),$$
with
$$
q_{\ell}(m)=e^{\frac{2i\pi\ell}{d_{D_1}}}\left(\frac{Q(im)}{R_{D_1}(im)}\right)^{1/d_{D_1}}q^{\frac{d_{D_1}+2k_1-1}{2k_{1}}},
$$
for every $0\le \ell\le d_{D_1}-1$. Let $U_{d}$ be an unbounded sector, and $\rho>0$ such that the following statements hold:
\begin{enumerate}\label{cond41}
\item[1)] There exists $M_1>0$ such that $|\tau-q_\ell(m)|\ge M_1(1+|\tau|)$ for every $0\le \ell\le d_{D_1}-1$, $m\in\mathbb{R}$, and $\tau\in U_d\cup \overline{D}(0,\rho)$. An appropriate choice of $r_{Q,R_{D_1}}$ and $\rho$ yields $|q_{\ell}(m)|>2\rho$ for every $m\in\mathbb{R}$, and $0\le \ell\le d_{D_1}-1$. In the case that $\nu_{Q,R_{D_1}}$ is small enough, the set $\{q_{\ell}(m):m\in\R,0\le \ell\le d_{D_1}-1\}$ stays at a positive distance to $U_d$, and it can be chosen with the property that $q_{\ell}(m)/\tau$ has positive distance to $1\in\mathbb{C}$ for every $\tau\in U_d$, $m\in\mathbb{R}$ and $0\le \ell\le d_{D_1}-1$.
\item[2)] There exists $M_2>0$ such that $|\tau-q_{\ell}(m)|\ge M_2|q_{\ell}(m)|$ for every $\ell\in\{0,\ldots,d_{D_1}-1\}$, $m\in\mathbb{R}$ and $\tau\in U_d\cup \overline{D}(0,\rho)$. This is a direct consequence of 1), for some small enough $M_2>0$.
\end{enumerate}

In order to prove the following upper estimates in (\ref{e113}), we make use of 1) for all $\ell\in\{0,\ldots,d_{D_1}-1\}$, except for one of them, say $\ell_0$, for which 2) is applied. The previous conditions yield the existence of $C_P>0$ such that
\begin{multline}\label{e113}
|P_{m,1}(\tau)|\\
\ge M_1^{d_{D_1}-1}M_2\frac{|R_{D_1}(im)|(1+|\tau|)^{d_{D_1}-1}}{(q^{1/k_1})^{\frac{(d_{D_1}+k_1)(d_{D_1}+k_1-1)}{2}}}\left(\frac{|Q(im)|}{|R_{D_1}(im)|}\right)^{1/d_{D_1}}q^{\frac{d_{D_1}+2k_1-1}{2k_{1}}}\\
\ge C_P(r_{Q,R_{D_1}})^{1/d_{D_1}}|R_{D_1}(im)|(1+|\tau|)^{d_{D_1}-1},
\end{multline}
for every $\tau\in U_d\cup \overline{D}(0,\rho)$, and $m\in\mathbb{R}$.

The next result states the existence and uniqueness of a solution of (\ref{e87}) in the space $\hbox{Exp}^{q}_{(\kappa,\beta,\mu,\alpha,\rho)}$, provided its norm in that space is small enough.
\begin{prop}\label{prop347}
Under the Assumptions (\ref{e107}), (\ref{e108a}), (\ref{e237}),  (\ref{e115b}) and (\ref{e115a}), there exist $r_{Q,R_{D_1}}>0$, a constant $\varpi>0$ and constants $\varsigma_\varphi,\varsigma_{\Psi}>0$ such that if
\begin{equation}\label{e256}
\tilde{C}_{\ell}\le \varsigma_{\varphi}\qquad C_{\Psi_{k_1}}\le \varsigma_{\Psi},
\end{equation}
for all $1\le \ell\le D-1$ (see~(\ref{e119}) and (\ref{e01})), then the equation (\ref{e87}) admits a unique solution $w^{d}_{k_1}(\tau,m,\epsilon)\in\hbox{Exp}^{q}_{(\kappa,\beta,\mu,\alpha,\rho)}$ with $\left\|w^{d}_{k_1}(\tau,m,\epsilon)\right\|_{(\kappa,\beta,\mu,\alpha,\rho)}\le\varpi$, for every $\epsilon\in D(0,\epsilon_0)$.
\end{prop}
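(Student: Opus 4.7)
The plan is a Banach fixed-point argument in the closed ball $\overline{B}(0,\varpi)$ of the space $\hbox{Exp}^{q}_{(\kappa,\beta,\mu,\alpha,\rho)}$. By the very definition of $P_{m,1}(\tau)$, the difference between the left-hand side of \eqref{e87} and its $R_{D_{1}}$-summand equals $\tau^{k_{1}}P_{m,1}(\tau)\,w_{k_{1}}(\tau,m,\epsilon)$. Since $P_{m,1}$ has no zeros on $U_{d}\cup\overline{D}(0,\rho)$ by the condition 1) selecting $U_d$ and $\rho$ stated after the factorization of $P_{m,1}$, dividing \eqref{e87} through by $\tau^{k_{1}}P_{m,1}(\tau)$ recasts the equation in the equivalent fixed-point form $w=\mathcal{H}_{\epsilon}(w)$, where $\mathcal{H}_{\epsilon}(w)$ decomposes into a linear $R_{D_{2}}$-piece proportional to $\frac{R_{D_{2}}(im)\,\tau^{d_{D_{2}}}}{P_{m,1}(\tau)}\,\sigma_{q,\tau}^{-d_{D_{2}}/\kappa}w$, finitely many linear convolution pieces of the form $\epsilon^{\Delta_{\ell}-d_{\ell}}\frac{\tau^{d_{\ell}}}{P_{m,1}(\tau)}\,\sigma_{q,\tau}^{-(d_{\ell}/k_{1}+1-\delta_{\ell})}\bigl(\varphi_{k_{1},\ell}\star^{R_{\ell}}_{q;1/k_{1}}w\bigr)$, and a forcing piece $\Psi_{k_{1}}(\tau,m,\epsilon)/P_{m,1}(\tau)$, up to harmless bounded normalizing $q$-powers. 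The second inequality in \eqref{e108a} together with $k_{1}<k_{2}$ guarantees that every shift exponent appearing above is nonpositive, as required by Proposition~\ref{prop1}; and $\Delta_{\ell}\ge d_{\ell}$ keeps the factors $\epsilon^{\Delta_{\ell}-d_{\ell}}$ uniformly bounded on $D(0,\epsilon_{0})$.

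Next, I would estimate each piece of $\mathcal{H}_{\epsilon}(w)$ separately in the norm $\|\cdot\|_{(\kappa,\beta,\mu,\alpha,\rho)}$. The lower bound \eqref{e113} displays $1/P_{m,1}(\tau)$ as the product of the bounded function $1/R_{D_{1}}(im)$ (into which the polynomial coefficients $R_{D_{2}}(im)$ and $R_{\ell}(im)$ are absorbed by Lemma~\ref{lema1} thanks to \eqref{e115b}--\eqref{e115a}) and an $a_{d_{D_{1}}-1}(\tau)$-type factor. For the $R_{D_{2}}$-piece I apply Proposition~\ref{prop1} with $(\gamma_{1},\gamma_{2},\gamma_{3})=(d_{D_{1}}-1,d_{D_{2}},d_{D_{2}}/\kappa)$, where the admissibility condition $\gamma_{1}+\kappa\gamma_{3}\ge\gamma_{2}$ trivially reduces to $d_{D_{1}}\ge 1$. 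For each $\ell$-piece I first invoke Proposition~\ref{prop3} (with $b(m)=1/R_{D_{1}}(im)$ and $Q=R_{\ell}$) to bound the inner convolution by $\bigl(\sum_{j=0}^{p_{1}}C_{3,j}\|C_{\ell,j}\|_{(\beta,\mu)}\bigr)\|w\|$, and then apply Proposition~\ref{prop1} with $(\gamma_{1},\gamma_{2},\gamma_{3})=(d_{D_{1}}-1,d_{\ell},d_{\ell}/k_{1}+1-\delta_{\ell})$; the corresponding admissibility condition is precisely what the inequalities gathered in \eqref{e108a} provide. Finally, the forcing piece is controlled by Proposition~\ref{prop1} with $\gamma_{2}=\gamma_{3}=0$, producing a contribution of order $C\,C_{\Psi_{k_{1}}}$.

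Adding up these estimates yields a bound of the shape $\|\mathcal{H}_{\epsilon}(w)\|_{(\kappa,\beta,\mu,\alpha,\rho)}\le A(\varsigma_{1},\ldots,\varsigma_{D-1},r_{Q,R_{D_{1}}})\,\|w\|_{(\kappa,\beta,\mu,\alpha,\rho)}+B\,C_{\Psi_{k_{1}}}$ for every $w\in\overline{B}(0,\varpi)$, where $A\to 0$ as $r_{Q,R_{D_{1}}}\to\infty$ or as the $\varsigma_{\ell}$'s shrink, uniformly in $\epsilon\in D(0,\epsilon_{0})$. Choosing $r_{Q,R_{D_{1}}}$ sufficiently large and the $\varsigma_{\ell}$'s sufficiently small one achieves $A\le \tfrac12$; then setting $\varpi:=2B\varsigma_{\Psi}$ and demanding $C_{\Psi_{1}}\le\varsigma_{\Psi}$ as in \eqref{e256} forces $\mathcal{H}_{\epsilon}(\overline{B}(0,\varpi))\subset\overline{B}(0,\varpi)$. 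Since $\mathcal{H}_{\epsilon}$ is affine in $w$, applying the very same estimates to $w_{1}-w_{2}$ yields the Lipschitz bound $\|\mathcal{H}_{\epsilon}(w_{1})-\mathcal{H}_{\epsilon}(w_{2})\|\le A\,\|w_{1}-w_{2}\|\le\tfrac12\|w_{1}-w_{2}\|$. Banach's fixed-point theorem then delivers the unique $w_{k_{1}}^{d}\in\overline{B}(0,\varpi)$ with $w_{k_{1}}^{d}=\mathcal{H}_{\epsilon}(w_{k_{1}}^{d})$, that is, a unique solution of \eqref{e87} in the prescribed ball. Holomorphy of $w_{k_{1}}^{d}$ in $\tau\in U_{d}\cup D(0,\rho)$ and in $\epsilon\in D(0,\epsilon_{0})$ is inherited by the iterates from the integrand defining the convolution and preserved in the limit by the uniform convergence underlying the contraction.

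The technical heart of the argument is the systematic verification of the exponent admissibility conditions of Proposition~\ref{prop1} for each summand of $\mathcal{H}_{\epsilon}$, particularly reconciling the negative dilation $\sigma_{q,\tau}^{-d_{D_{2}}/\kappa}$ in the $R_{D_{2}}$-piece with the $(1+|\tau|)^{-(d_{D_{1}}-1)}$ decay furnished by \eqref{e113}, and confirming that \eqref{e108a} delivers the corresponding inequalities for the convolution pieces. Once this bookkeeping is complete, contraction and self-mapping follow automatically from the smallness hypotheses \eqref{e256} and the freedom in the choice of $r_{Q,R_{D_{1}}}$.
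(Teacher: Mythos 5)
Your proposal is correct and follows essentially the same route as the paper: divide \eqref{e87} by $\tau^{k_1}P_{m,1}(\tau)$ using the lower bound \eqref{e113}, estimate the $R_{D_2}$-term, the convolution terms and the forcing term via Lemma~\ref{lema1}, Proposition~\ref{prop1} and Proposition~\ref{prop3} with exactly the exponent choices you list, and close with the Banach fixed-point theorem on $\overline{B}(0,\varpi)$ after shrinking $\varsigma_\ell,\varsigma_\Psi$ and enlarging $r_{Q,R_{D_1}}$. Your explicit packaging of the estimate as $A\|w\|+BC_{\Psi_{k_1}}$ with $\varpi=2B\varsigma_\Psi$ is just a tidier presentation of the paper's conditions \eqref{e292} and \eqref{e341}.
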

\begin{proof}
Let $\epsilon\in D(0,\epsilon_0)$ and consider the operator $\mathcal{H}_{\epsilon}$ defined by
\begin{align*}
&\mathcal{H}^1_{\epsilon}(w(\tau,m)):= \frac{R_{D_2}(im)}{P_{m,1}(\tau)}\frac{\tau^{d_{D_2}}}{(q^{1/k_1})^{\frac{(d_{D_2}+k_1)(d_{D_2}+k_1-1)}{2}}}\sigma_{q,\tau}^{d_{D_2}\left(\frac{1}{k_2}-\frac{1}{k_1}\right)}w(\tau,m)\\
&+\sum_{\ell=1}^{D-1}\epsilon^{\Delta_\ell-d_{\ell}}\frac{\tau^{d_{\ell}}}{P_{m,1}(\tau)(q^{1/k_1})^{\frac{(d_\ell+k_1)(d_\ell+k_1-1)}{2}}}\sigma_{q,\tau}^{\delta_\ell-\frac{d_\ell}{k_1}-1}\left(\frac{1}{(2\pi)^{1/2}}\varphi_{k_1,\ell}(\tau,m,\epsilon)\star^{R_\ell}_{q;1/k_1}w(\tau,m)\right)\\
&+\frac{1}{P_{m,1}(\tau)(q^{1/k_1})^{\frac{k_1(k_1-1)}{2}}}\Psi_{k_1}(\tau,m,\epsilon).
\end{align*}
Note that a fixed point of $\mathcal{H}^1_{\epsilon}(w(\tau,m))$ will lead to a convenient  solution of (\ref{e87}).
To apply the fixed point theorem, we are going to prove successively two facts.
\begin{enumerate}
\item One may choose small enough $\varsigma_\varphi,\varsigma_{\Psi},\varpi>0$, and large enough $r_{Q,R_{D_1}}>0$ such that 
\begin{equation}\label{e130}
\mathcal{H}^1_{\epsilon}(\overline{B}(0,\varpi))\subseteq \overline{B}(0,\varpi),
\end{equation}
where $\overline{B}(0,\varpi)$ stands for the closed disc centered at 0, with radius $\varpi$ in the Banach space $\hbox{Exp}^{q}_{(\kappa,\beta,\mu,\alpha,\rho)}$.
\item It holds 
\begin{equation}\label{e131}
\left\|\mathcal{H}^1_\epsilon(w_1(\tau,m))-\mathcal{H}^1_\epsilon(w_2(\tau,m))\right\|_{(\kappa,\beta,\mu,\alpha,\rho)}\le \frac{1}{2}\left\|w_1(\tau,m)-w_2(\tau,m)\right\|_{(\kappa,\beta,\mu,\alpha,\rho)},
\end{equation} 
for every $w_1(\tau,m),w_2(\tau,m)\in\overline{B}(0,\varpi)$.\end{enumerate}
\pagebreak[2]
\begin{center}
Proof of (\ref{e130}).
\end{center}
 We first check (\ref{e130}).
Let $w(\tau,m)\in\hbox{Exp}^{q}_{(\kappa,\beta,\mu,\alpha,\rho)}$.

With (\ref{e108a}) and the definition of $\kappa$, we find that 
$d_{D_{1}}-1+\kappa (d_{\ell}/k_{1}+1-\delta_{\ell})\geq d_{\ell}$ and $d_{\ell}/k_{1}+1-\delta_{\ell}\ge0$.
Thus, taking into account assumptions (\ref{e107}), (\ref{e115a}), regarding (\ref{e113}) together with Proposition~\ref{prop1} and Proposition~\ref{prop3} we get
\begin{multline}
\left\|\epsilon^{\Delta_\ell-d_{\ell}}\frac{\tau^{d_{\ell}}}{P_{m,1}(\tau)(q^{1/k_1})^{\frac{(d_\ell+k_1)(d_\ell+k_1-1)}{2}}}\sigma_{q,\tau}^{\delta_\ell-\frac{d_\ell}{k_1}-1}\left(\frac{1}{(2\pi)^{1/2}}\varphi_{k_1,\ell}(\tau,m,\epsilon)\star^{R_\ell}_{q;1/k_1}w(\tau,m)\right)\right\|_{(\kappa,\beta,\mu,\alpha,\rho)}\\
\le \epsilon_0^{\Delta_{\ell}-d_\ell}\frac{C_1C_3\varsigma_{\varphi}}{(q^{1/k_1})^{\frac{(d_\ell+k_1)(d_\ell+k_1-1)}{2}}C_{P}(r_{Q,R_{D_1}})^{1/d_{D_1}}(2\pi)^{1/2}}\left\|w(\tau,m)\right\|_{(\kappa,\beta,\mu,\alpha,\rho)}\label{e257}.
\end{multline}
Gathering Lemma~\ref{lema1}, we get
\begin{align}
&\left\|\frac{1}{P_{m,1}(\tau)(q^{1/k_1})^{k_1(k_1-1)/2}}\Psi_{k_1}(\tau,m,\epsilon)\right\|_{(\kappa,\beta,\mu,\alpha,\rho)}\nonumber\\
&\le\frac{1}{(q^{1/k_1})^{k_1(k_1-1)/2}C_P(r_{Q,R_{D_1}})^{1/d_{D_1}}}\sup_{m\in\mathbb{R}}\frac{1}{|R_{D_1}(im)|}\varsigma_{\Psi}.\label{e287}
\end{align}
Condition (\ref{e115b}) and the application of Proposition~\ref{prop1} and Lemma~\ref{lema1} yields
\begin{align}
&\left\|\frac{R_{D_2}(im)}{P_{m,1}(\tau)}\frac{\tau^{d_{D_2}}}{(q^{1/k_1})^{\frac{(d_{D_2}+k_1)(d_{D_2}+k_1-1)}{2}}}\sigma_{q,\tau}^{d_{D_2}\left(\frac{1}{k_2}-\frac{1}{k_1}\right)}w(\tau,m)\right\|_{(\kappa,\beta,\mu,\alpha,\rho)}\nonumber\\
&\le \sup_{m\in\mathbb{R}}\frac{|R_{D_2}(im)|}{|R_{D_1}(im)|}\frac{C_1}{(q^{1/k_1})^{\frac{(d_{D_2}+k_1)(d_{D_2}+k_1-1)}{2}}C_P(r_{Q,R_{D_1}})^{1/d_{D_1}}}\varpi.\label{e307}
\end{align}
An appropriate choice of $r_{Q,R_{D_1}}>0$, $\varpi,\varsigma_\Psi,\varsigma_\varphi>0$ gives
\begin{align}
&\sum_{\ell=1}^{D-1}\epsilon_0^{\Delta_{\ell}-d_\ell}\frac{C_3\varsigma_{\varphi}C_1}{(q^{1/k_1})^{\frac{(d_\ell+k_1)(d_\ell+k_1-1)}{2}}C_{P}(r_{Q,R_{D_1}})^{1/d_{D_1}}(2\pi)^{1/2}}\varpi\nonumber\\
&+\frac{1}{(q^{1/k_1})^{k_1(k_1-1)/2}C_P(r_{Q,R_{D_1}})^{1/d_{D_1}}}\sup_{m\in\mathbb{R}}\frac{1}{|R_{D_1}(im)|}\varsigma_{\Psi}\nonumber\\
&+\sup_{m\in\mathbb{R}}\frac{|R_{D_2}(im)|}{|R_{D_1}(im)|}\frac{C_1\varpi}{(q^{1/k_1})^{\frac{(d_{D_2}+k_1)(d_{D_2}+k_1-1)}{2}}C_P(r_{Q,R_{D_1}})^{1/d_{D_1}}}\le \varpi.\label{e292}
\end{align}
Regarding (\ref{e257}), (\ref{e287}), (\ref{e307}) and (\ref{e292}) one concludes (\ref{e130}). 
\begin{center}
Proof of (\ref{e131}).
\end{center}
We proceed to prove (\ref{e131}). Let $w_1,w_2\in\hbox{Exp}^{q}_{(\kappa,\beta,\mu,\alpha,\rho)}$. We assume $\left\|w_\ell(\tau,m)\right\|_{(\kappa,\beta,\mu,\alpha,\rho)}\le \varpi,$ $\ell=1,2$, for some $\varpi>0$. Let $E(\tau,m):=w_1(\tau,m)-w_2(\tau,m)$. On one hand, from (\ref{e257}) one has

\begin{align}
&\left\|\frac{\epsilon^{\Delta_\ell-d_{\ell}}\tau^{d_{\ell}}}{P_{m,1}(\tau)(q^{1/k_1})^{\frac{(d_\ell+k_1)(d_\ell+k_1-1)}{2}}}\sigma_{q,\tau}^{\delta_\ell-\frac{d_\ell}{k_1}-1}\left(\frac{1}{(2\pi)^{1/2}}\varphi_{k_1,\ell}(\tau,m,\epsilon)\star^{R_\ell}_{q;1/k_1}E(\tau,m)\right)\right\|_{(\kappa,\beta,\mu,\alpha,\rho)}\nonumber\\
&\le \epsilon_0^{\Delta_{\ell}-d_\ell}\frac{C_3\varsigma_{\varphi}C_1}{(q^{1/k_1})^{\frac{(d_\ell+k_1)(d_\ell+k_1-1)}{2}}C_{P}(r_{Q,R_{D_1}})^{1/d_{D_1}}(2\pi)^{1/2}}\left\|E(\tau,m)\right\|_{(\kappa,\beta,\mu,\alpha,\rho)}\label{e330}.
\end{align}

On the other hand, (\ref{e307}) yields

\begin{align}
&\left\|\frac{R_{D_2}(im)}{P_{m,1}(\tau)}\frac{\tau^{d_{D_2}}}{(q^{1/k_1})^{\frac{(d_{D_2}+k_1)(d_{D_2}+k_1-1)}{2}}}\sigma_{q,\tau}^{d_{D_2}\left(\frac{1}{k_2}-\frac{1}{k_1}\right)}E(\tau,m)\right\|_{(\kappa,\beta,\mu,\alpha,\rho)}\nonumber\\
&\le \sup_{m\in\mathbb{R}}\frac{|R_{D_2}(im)|}{|R_{D_1}(im)|}\frac{C_1}{(q^{1/k_1})^{\frac{(d_{D_2}+k_1)(d_{D_2}+k_1-1)}{2}}C_P(r_{Q,R_{D_1}})^{1/d_{D_1}}}\left\|E(\tau,m)\right\|_{(\kappa,\beta,\mu,\alpha,\rho)}.\label{e337}
\end{align}

We choose $r_{Q,R_{D_1}}>0$, $\varsigma_{\varphi}>0$ such that
\begin{align}
&\sum_{\ell=1}^{D-1}\epsilon_0^{\Delta_{\ell}-d_\ell}\frac{C_3\varsigma_{\varphi}C_1}{(q^{1/k_1})^{\frac{(d_\ell+k_1)(d_\ell+k_1-1)}{2}}C_{P}(r_{Q,R_{D_1}})^{1/d_{D_1}}(2\pi)^{1/2}}\nonumber\\
&+\sup_{m\in\mathbb{R}}\frac{|R_{D_2}(im)|}{|R_{D_1}(im)|}\frac{C_1}{(q^{1/k_1})^{\frac{(d_{D_2}+k_1)(d_{D_2}+k_1-1)}{2}}C_P(r_{Q,R_{D_1}})^{1/d_{D_1}}}\le \frac{1}{2}.\label{e341}
\end{align}

The statement (\ref{e131}) is a direct consequence of condition (\ref{e341}) applied to (\ref{e330}) and (\ref{e337}).\\ \par 

Let us finish the proof of the proposition. At this point, in view of (\ref{e130}) and (\ref{e131}), one can choose $\varpi>0$ such that $\overline{B}(0,\varpi)\subseteq\hbox{Exp}^{q}_{(\kappa,\beta,\mu,\alpha,\rho)}$, which defines a complete metric space for the norm $\left\|\cdot\right\|_{(\kappa,\beta,\mu,\alpha,\rho)}$. The map $\mathcal{H}^1_{\epsilon}$ is contractive from $\overline{B}(0,\varpi)$ into itself. The fixed point theorem states that $\mathcal{H}^1_{\epsilon}$ admits a unique fixed point $w_{k_1}^{d}(\tau,m,\epsilon)\in\overline{B}(0,\varpi)\subseteq\hbox{Exp}^{q}_{(\kappa,\beta,\mu,\alpha,\rho)}$, for every $\epsilon\in D(0,\epsilon_0)$. The construction of $w_{k_1}^{d}(\tau,m,\epsilon)$ allows us to conclude that it turns out to be a solution of (\ref{e87}).
\end{proof}

The next step consists of studying the solutions of a second auxiliary problem. This problem lies in a second $q$-Borel plane and its solution would guarantee the extension, with appropriate growth, of the acceleration of the solution to our first auxiliary problem, described in (\ref{e87}). 

We set
\begin{equation}\label{lema5b}
\Psi_{k_2}(\tau,m,\epsilon) = \sum_{n \geq 0} F_{n}(m,\epsilon)
\frac{\tau^{n}}{(q^{1/k_{2}})^{n(n-1)/2}}
\end{equation}
the $q$-Borel transform of order $k_2$ of $\hat{F}(T,m,\epsilon)$. According to the second
condition of (\ref{e119}), the expression $\Psi_{k_{2}}(\tau,m,\epsilon)$ stands for an entire
function of $q$-exponential growth of order $k_2$ which belongs to the Banach space
$\mathrm{Exp}_{(k_{2},\beta,\mu,\nu)}^{q}$ provided that $\nu \in \mathbb{R}$
satisfies $T_{0} > q^{\frac{1}{2k_{2}}}/q^{\nu/k_{2}}$ for any unbounded sector
$S_{d}$. More precisely, we have
\begin{equation}\label{e1139}
\left\|\Psi_{k_{2}}(\tau,m,\epsilon)\right\|_{(k_{2},\beta,\mu,\nu)} \leq C_{\Psi_{k_2}} 
\end{equation}
for some constant $C_{\Psi_{k_2}} >0$, for all $\epsilon \in D(0,\epsilon_{0})$.\medskip

\subsection{Analytic solutions of a second auxiliary problem in the $q$-Borel plane}\label{seccion42}

We multiply both sides of equation (\ref{e59}) by $T^{k_2}$ and apply formal $q$-Borel transformation of order $k_2$ with respect to $T$. In view of the properties of $q$-Borel transformation, the resulting problem is determined by

\begin{multline}
Q(im)\frac{\tau^{k_2}}{(q^{1/k_2})^{\frac{k_2(k_2-1)}{2}}}\hat{w}_{k_2}(\tau,m,\epsilon)\\
=R_{D_1}(im)\frac{\tau^{d_{D_1}+k_2}}{(q^{1/k_2})^{\frac{(d_{D_1}+k_2)(d_{D_1}+k_2-1)}{2}}}\sigma_{q,\tau}^{d_{D_1}\left(\frac{1}{k_1}-\frac{1}{k_2}\right)}\hat{w}_{k_2}(\tau,m,\epsilon)\\
+R_{D_2}(im)\frac{\tau^{d_{D_2}+k_2}}{(q^{1/k_2})^{\frac{(d_{D_2}+k_2)(d_{D_2}+k_2-1)}{2}}}\hat{w}_{k_2}(\tau,m,\epsilon)+\frac{\tau^{k_2}}{(q^{1/k_2})^{\frac{k_2(k_2-1)}{2}}}\Psi_{k_2}(\tau,m,\epsilon)\\
+\sum_{\ell=1}^{D-1}\epsilon^{\Delta_\ell-d_\ell}\frac{\tau^{d_\ell+k_2}}{(q^{1/k_2})^{\frac{(d_{\ell}+k_2)(d_{\ell}+k_2-1)}{2}}}\sigma_{q,\tau}^{\delta_\ell-\frac{d_\ell}{k_2}-1}\left(\frac{1}{(2\pi)^{1/2}}\varphi_{k_2,\ell}(\tau,m,\epsilon)\star^{R_\ell}_{q;1/k_2}\hat{w}_{k_2}(\tau,m,\epsilon)\right).\label{e412}
\end{multline}

Here $\hat{w}_{k_2}(\tau,m,\epsilon)$,
$\Psi_{k_2}(\tau,m,\epsilon)$ and $\varphi_{k_{2},l}(\tau,m,\epsilon)$ stand for the
formal $q$-Borel transform of order $k_2$ of $U(T,m,\epsilon)$, $\hat{F}(T,m,\epsilon)$ and
$\hat{C}_{l}(T,m,\epsilon)$.


We 
consider our second auxiliary problem, namely

\begin{multline}
Q(im)\frac{\tau^{k_2}}{(q^{1/k_2})^{\frac{k_2(k_2-1)}{2}}}w_{k_2}(\tau,m,\epsilon)\\
=R_{D_1}(im)\frac{\tau^{d_{D_1}+k_2}}{(q^{1/k_2})^{\frac{(d_{D_1}+k_2)(d_{D_1}+k_2-1)}{2}}}\sigma_{q,\tau}^{d_{D_1}\left(\frac{1}{k_1}-\frac{1}{k_2}\right)}w_{k_2}(\tau,m,\epsilon)\\
+R_{D_2}(im)\frac{\tau^{d_{D_2}+k_2}}{(q^{1/k_2})^{\frac{(d_{D_2}+k_2)(d_{D_2}+k_2-1)}{2}}}w_{k_2}(\tau,m,\epsilon)+\frac{\tau^{k_2}}{(q^{1/k_2})^{\frac{k_2(k_2-1)}{2}}}\Psi_{k_2}(\tau,m,\epsilon)\\
+\sum_{\ell=1}^{D-1}\epsilon^{\Delta_\ell-d_\ell}\frac{\tau^{d_\ell+k_2}}{(q^{1/k_2})^{\frac{(d_{\ell}+k_2)(d_{\ell}+k_2-1)}{2}}}\sigma_{q,\tau}^{\delta_\ell-\frac{d_\ell}{k_2}-1}\left(\frac{1}{(2\pi)^{1/2}}\varphi_{k_2,\ell}(\tau,m,\epsilon)\star^{R_\ell}_{q;1/k_2}w_{k_2}(\tau,m,\epsilon)\right).\label{e412b}
\end{multline}

We assume an unbounded sector of bisecting direction $d_{Q,R_{D_2}}\in\mathbb{R}$ exists,
$$S_{Q,R_{D_2}}=\left\{z\in\mathbb{C}: |z|\ge r_{Q,R_{D_2}}, |\arg(z)-d_{Q,R_{D_2}}|\le \nu_{Q,R_{D_2}}\right\},$$
for some $\nu_{Q,R_{D_2}}>0$, in such a way that 
$$
\frac{Q(im)}{R_{D_2}(im)}\in S_{Q,R_{D_2}},
$$
for every $m\in\mathbb{R}$. We factorize
$$P_{m,2}(\tau)=\frac{Q(im)}{(q^{1/k_2})^{\frac{k_2(k_2-1)}{2}}}-\frac{R_{D_{2}}(im)}{(q^{1/k_2})^{\frac{(d_{D_2}+k_2)(d_{D_2}+k_2-1)}{2}}}\tau^{d_{D_2}}$$
in the form
$$P_{m,2}(\tau)=-\frac{R_{D_2}(im)}{(q^{1/k_2})^{\frac{(d_{D_2}+k_2)(d_{D_2}+k_2-1)}{2}}}\prod_{\ell=0}^{d_{D_2}-1}(\tau-q_{\ell,2}(m)).$$
Let $S_d$ be an unbounded sector with small enough aperture in such a way that:\label{cond42}
\begin{enumerate}
\item[1)] There exists $M_{12}>0$ such that $|\tau-q_{\ell,2}(m)|\ge M_{12}(1+|\tau|)$ for every $0\le \ell\le d_{D_2}-1$, $m\in\mathbb{R}$, and $\tau\in S_d$.
\item[2)] There exists $M_{22}>0$ such that we have $|\tau-q_{\ell,2}(m)|\ge M_{22}|q_{\ell,2}(m)|$ for all ${\ell\in\{0,\ldots,d_{D_2}-1\}}$, $m\in\mathbb{R}$ and $\tau\in S_d$.
\end{enumerate}

In the following estimates, we apply 1) in the previous assumption to all indices $\ell\in\{0,\ldots,d_{D_2}-1\}$, except for one of them, say $\ell_0$, that we apply 2). This yields the existence of $C_{P,2}>0$ such that
\begin{equation}\label{e463}
|P_{m,2}(\tau)|\ge C_{P,2}(r_{Q,R_{D_2}})^{1/d_{D_2}}|R_{D_2}(im)|(1+|\tau|)^{d_{D_2}-1},
\end{equation}
for every $\tau\in S_d$, and $m\in\mathbb{R}$.

\begin{prop}\label{prop11}
Under the hypotheses  (\ref{e107}), (\ref{e108a}), (\ref{e237}),  (\ref{e115b}), (\ref{e115a}) and those on the geometry of the problem, there exist $r_{Q,R_{D_2}},\varpi>0$ and $\varsigma_{\varphi},\varsigma_{\Psi}>0$ under (\ref{e256}) such that for every $\epsilon\in D(0,\epsilon_0)$, the equation (\ref{e412b}) admits a unique solution $w_{k_2}^d(\tau,m,\epsilon)$ in the space $\hbox{Exp}^q_{(k_2,\beta,\mu,\nu)}$ for $\nu\in\mathbb{R}$ 
and depends holomorphically with respect to $\epsilon\in D(0,\epsilon_0)$. Moreover, $\left\|w^d_{k_2}(\tau,m,\epsilon)\right\|_{(k_2,\beta,\mu,\nu)}\le\varpi$.
\end{prop}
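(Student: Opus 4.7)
The plan is to mimic the proof of Proposition~\ref{prop347}, but using the second family of Banach spaces introduced in Subsection~\ref{sec22} in place of the first, and the lower bound (\ref{e463}) on $|P_{m,2}(\tau)|$ in place of (\ref{e113}). First, I would rewrite (\ref{e412b}) as a fixed-point equation by dividing both sides by $\tau^{k_{2}}P_{m,2}(\tau)$ and moving all the terms containing the unknown to the right-hand side. This produces an operator
\begin{align*}
\mathcal{H}_{\epsilon}^{2}(w)(\tau,m)
&:=\frac{R_{D_{1}}(im)}{P_{m,2}(\tau)}\,\frac{\tau^{d_{D_{1}}}}{(q^{1/k_{2}})^{(d_{D_{1}}+k_{2})(d_{D_{1}}+k_{2}-1)/2}}\,\sigma_{q,\tau}^{d_{D_{1}}(1/k_{1}-1/k_{2})}w(\tau,m)\\
&\quad+\sum_{\ell=1}^{D-1}\epsilon^{\Delta_{\ell}-d_{\ell}}\,\frac{\tau^{d_{\ell}}}{P_{m,2}(\tau)(q^{1/k_{2}})^{(d_{\ell}+k_{2})(d_{\ell}+k_{2}-1)/2}}\,\sigma_{q,\tau}^{\delta_{\ell}-d_{\ell}/k_{2}-1}\!\left(\tfrac{1}{(2\pi)^{1/2}}\varphi_{k_{2},\ell}(\tau,m,\epsilon)\star_{q;1/k_{2}}^{R_{\ell}}w(\tau,m)\right)\\
&\quad+\frac{1}{P_{m,2}(\tau)(q^{1/k_{2}})^{k_{2}(k_{2}-1)/2}}\,\Psi_{k_{2}}(\tau,m,\epsilon),
\end{align*}
and one checks that a fixed point of $\mathcal{H}_{\epsilon}^{2}$ lying in the closed ball $\overline{B}(0,\varpi)\subset\hbox{Exp}_{(k_{2},\beta,\mu,\nu)}^{q}$ is a solution of (\ref{e412b}).

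Next, I would verify the two properties $\mathcal{H}_{\epsilon}^{2}(\overline{B}(0,\varpi))\subseteq\overline{B}(0,\varpi)$ and $\|\mathcal{H}_{\epsilon}^{2}(w_{1})-\mathcal{H}_{\epsilon}^{2}(w_{2})\|_{(k_{2},\beta,\mu,\nu)}\le\tfrac{1}{2}\|w_{1}-w_{2}\|_{(k_{2},\beta,\mu,\nu)}$ for $w,w_{1},w_{2}\in\overline{B}(0,\varpi)$, handling each block in turn. For the $R_{D_{1}}$ contribution, (\ref{e463}) yields a bounded factor behaving like $(1+|\tau|)^{-(d_{D_{2}}-1)}$, and Proposition~\ref{prop4} then applies with parameters $\gamma_{1}=d_{D_{2}}-1$, $\gamma_{2}=d_{D_{1}}$, $\gamma_{3}=d_{D_{1}}(1/k_{1}-1/k_{2})>0$; the admissibility condition (\ref{e185}) reduces precisely to $k_{1}d_{D_{2}}\ge k_{2}d_{D_{1}}+k_{1}(d_{D_{1}}-1)+(k_{1}-k_{2})$, which is delivered by (\ref{e237}). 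The $R_{D_{1}}/R_{D_{2}}$-ratio appearing after division is bounded in $m$ thanks to (\ref{e115b}) and (\ref{e115a}), so Lemma~\ref{lema2} absorbs it. For each convolution term, Proposition~\ref{prop3b} controls the $\star_{q;1/k_{2}}^{R_{\ell}}$-product, and Proposition~\ref{prop4} is then applied with $\gamma_{1}=d_{D_{2}}-1$, $\gamma_{2}=d_{\ell}$ and $\gamma_{3}=\delta_{\ell}-d_{\ell}/k_{2}-1\le 0$, whose admissibility follows from the second and fourth inequalities in (\ref{e108a}); the prefactor $\epsilon_{0}^{\Delta_{\ell}-d_{\ell}}$ remains bounded because of the first inequality in (\ref{e108a}). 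Finally, Lemma~\ref{lema5b} applied to $Y_{k_{1}}=\Psi_{k_{1}}$ shows that $\Psi_{k_{2}}$ belongs to $\hbox{Exp}_{(k_{2},\beta,\mu,\nu)}^{q}$ with norm controlled by $\varsigma_{\Psi}$, so the forcing contribution is estimated via Lemma~\ref{lema2}.

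Assembling these bounds, I would then choose $r_{Q,R_{D_{2}}}$ sufficiently large (to make the constants involving $1/(C_{P,2}r_{Q,R_{D_{2}}}^{1/d_{D_{2}}})$ small) and the bounds $\varsigma_{\ell},\varsigma_{\Psi}$ sufficiently small, so that on the one hand the combined prefactor of $\|w\|_{(k_{2},\beta,\mu,\nu)}$ coming from the $R_{D_{1}}$ and convolution terms does not exceed $1/2$ (this yields the contraction (ii) and half of (i)), and on the other hand the forcing contribution is no larger than $\varpi/2$ (completing (i)). Banach's fixed point theorem then produces a unique $w_{k_{2}}^{d}\in\overline{B}(0,\varpi)$; holomorphic dependence on $\epsilon\in D(0,\epsilon_{0})$ is inherited from $\mathcal{H}_{\epsilon}^{2}$ via uniform convergence of the Picard iterates.

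The main obstacle will be the bookkeeping of exponents: verifying that the admissibility inequality $\gamma_{1}+k_{2}\gamma_{3}\ge\gamma_{2}$ of Proposition~\ref{prop4} holds for each block under the hypotheses (\ref{e107})--(\ref{e237}). The new subtlety compared with Proposition~\ref{prop347} is the $R_{D_{1}}$ term, where $\gamma_{3}>0$; this is where assumption (\ref{e237}) enters essentially, in contrast to the first auxiliary problem where all dilations were non-positive. Everything else is a faithful transcription of the arguments leading to Proposition~\ref{prop347}, with Propositions~\ref{prop1} and \ref{prop3} replaced by their $S_{d}$-counterparts Propositions~\ref{prop4} and \ref{prop3b}.
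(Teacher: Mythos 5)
Your proposal reproduces the paper's proof essentially verbatim: the same operator $\mathcal{H}^{2}_{\epsilon}$, the same two-step fixed-point scheme (stability of $\overline{B}(0,\varpi)$ followed by the $\tfrac{1}{2}$-contraction estimate) resting on the lower bound (\ref{e463}), Lemma~\ref{lema2}, Propositions~\ref{prop4} and~\ref{prop3b}, and Lemma~\ref{lema5b} for the forcing term, with $r_{Q,R_{D_2}}$ chosen large and $\varsigma_{\ell},\varsigma_{\Psi}$ small. One small correction: for the $R_{D_1}$ block the condition (\ref{e185}) with $\gamma_1=d_{D_2}-1$, $\gamma_2=d_{D_1}$, $\gamma_3=d_{D_1}(1/k_1-1/k_2)$ reduces to $k_{1}d_{D_{2}}\ge (2k_{1}-k_{2})d_{D_{1}}+k_{1}$ rather than the inequality you wrote, but this corrected form is indeed a consequence of (\ref{e237}) (which forces $d_{D_{2}}-1\ge d_{D_{1}}$ since $d_{D_1},d_{D_2}$ are integers), so your argument is unaffected.
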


\begin{proof}
Let $\epsilon\in D(0,\epsilon_0)$. We consider the map $\mathcal{H}_{\epsilon}^{2}$ defined by 
\begin{multline}
\mathcal{H}_{\epsilon}^{2}(w(\tau,m))
:= \frac{R_{D_1}(im)}{P_{m,2}(\tau)}\frac{\tau^{d_{D_1}}}{(q^{1/k_2})^{\frac{(d_{D_1}+k_2)(d_{D_1}+k_2-1)}{2}}}\sigma_{q,\tau}^{d_{D_1}\left(\frac{1}{k_1}-\frac{1}{k_2}\right)}w(\tau,m)\\
+\sum_{\ell=1}^{D-1}\epsilon^{\Delta_\ell-d_{\ell}}\frac{\tau^{d_{\ell}}}{P_{m,2}(\tau)(q^{1/k_2})^{\frac{(d_\ell+k_2)(d_\ell+k_2-1)}{2}}}\sigma_{q,\tau}^{\delta_\ell-\frac{d_\ell}{k_2}-1}\left(\frac{1}{(2\pi)^{1/2}}\varphi_{k_2,\ell}(\tau,m,\epsilon)\star^{R_\ell}_{q;1/k_2}w(\tau,m)\right)\\
+\frac{1}{P_{m,2}(\tau)(q^{1/k_2})^{\frac{k_2(k_2-1)}{2}}}\Psi_{k_2}(\tau,m,\epsilon).\label{e594}
\end{multline}
Note that a fixed point of $\mathcal{H}^{2}_{\epsilon}(w(\tau,m))$ will lead to a convenient  solution of (\ref{e412b}).
To apply the fixed point theorem, we are going to prove successively two facts.
\begin{enumerate}
\item One may choose small enough $\varsigma_\varphi,\varsigma_{\Psi},\varpi>0$, and large enough $r_{Q,R_{D_2}}>0$ such that 
\begin{equation}\label{e130b}
\mathcal{H}_{\epsilon}^2(\overline{B}(0,\varpi))\subseteq \overline{B}(0,\varpi),
\end{equation}
where $\overline{B}(0,\varpi)$ stands for the closed disc centered at 0, with radius $\varpi$ in the Banach space $\hbox{Exp}^{q}_{(k_2,\beta,\mu,\nu)}$.
\item It holds 
\begin{equation}\label{e131b}
\left\|\mathcal{H}^2_\epsilon(w_1(\tau,m))-\mathcal{H}^2_\epsilon(w_2(\tau,m))\right\|_{(k_2,\beta,\mu,\nu)}\le \frac{1}{2}\left\|w_1(\tau,m)-w_2(\tau,m)\right\|_{(k_2,\beta,\mu,\nu)},
\end{equation} 
for every $w_1(\tau,m),w_2(\tau,m)\in\overline{B}(0,\varpi)$.\end{enumerate}

\begin{center}
Proof of (\ref{e130b}).
\end{center}
 We first check (\ref{e130b}). Let $w(\tau,m)\in\hbox{Exp}^{q}_{(k_2,\beta,\mu,\nu)}$.

With (\ref{e108a}), we find that 
$d_{D_{2}}-1+k_{2} (d_{\ell}/k_{2}+1-\delta_{\ell})\geq d_{\ell}$.
Taking into account assumptions (\ref{e107}), (\ref{e115b}), (\ref{e115a}), regarding (\ref{e463}) together with Lemma~\ref{lema2}, Proposition~\ref{prop4} and Proposition~\ref{prop3b} we get
\begin{align}
&\left\|\epsilon^{\Delta_\ell-d_{\ell}}\frac{\tau^{d_{\ell}}}{P_{m,2}(\tau)(q^{1/k_2})^{\frac{(d_\ell+k_2)(d_\ell+k_2-1)}{2}}}\sigma_{q,\tau}^{\delta_\ell-\frac{d_\ell}{k_2}-1}\left(\frac{1}{(2\pi)^{1/2}}\varphi_{k_2,\ell}(\tau,m,\epsilon)\star^{R_\ell}_{q;1/k_2}w(\tau,m)\right)\right\|_{(k_2,\beta,\mu,\nu)}\nonumber\\
&\le \epsilon_0^{\Delta_{\ell}-d_\ell}\frac{C_4C_3\varsigma_\varphi}{(q^{1/k_2})^{\frac{(d_\ell+k_2)(d_\ell+k_2-1)}{2}}C_{P,2}(r_{Q,R_{D_2}})^{1/d_{D_2}}(2\pi)^{1/2}}\left\|w(\tau,m)\right\|_{(k_2,\beta,\mu,\nu)}\label{e257b}.
\end{align}
Gathering Lemma~\ref{lema2} we get
\begin{align}
&\left\|\frac{1}{P_{m,2}(\tau)(q^{1/k_2})^{k_2(k_2-1)/2}}\Psi_{k_2}(\tau,m,\epsilon)\right\|_{(k_2,\beta,\mu,\nu)}\nonumber\\
&\le\frac{1}{(q^{1/k_2})^{k_2(k_2-1)/2}C_{P,2}(r_{Q,R_{D_2}})^{1/d_{D_2}}}\sup_{m\in\mathbb{R}}\frac{1}{|R_{D_2}(im)|}\varsigma_{\Psi_2},\label{e287b}
\end{align}
for some $\varsigma_{\Psi_2}$. Observe that 
$\varsigma_{\Psi_2}$ tends to 0 when $\varsigma_{\Psi}$ does.

Condition (\ref{e115b}), and the application of Proposition~\ref{prop4} and Lemma~\ref{lema2} yields
\begin{align}
&\left\|\frac{R_{D_1}(im)}{P_{m,2}(\tau)}\frac{\tau^{d_{D_1}}}{(q^{1/k_2})^{\frac{(d_{D_1}+k_2)(d_{D_1}+k_2-1)}{2}}}\sigma_{q,\tau}^{d_{D_1}\left(\frac{1}{k_1}-\frac{1}{k_2}\right)}w(\tau,m)\right\|_{(k_2,\beta,\mu,\nu)}\nonumber\\
&\le \sup_{m\in\mathbb{R}}\frac{|R_{D_1}(im)|}{|R_{D_2}(im)|}\frac{C_4}{(q^{1/k_2})^{\frac{(d_{D_1}+k_2)(d_{D_1}+k_2-1)}{2}}C_{P,2}(r_{Q,R_{D_2}})^{1/d_{D_2}}}\varpi. \label{e307b}
\end{align}

An appropriate choice of $r_{Q,R_{D_2}}>0$, $\varpi,\varsigma_\varphi,\varsigma_\Psi>0$ gives
\begin{align}
&\sum_{\ell=1}^{D-1}\epsilon_0^{\Delta_{\ell}-d_\ell}\frac{C_3\varsigma_\varphi C_4}{(q^{1/k_2})^{\frac{(d_\ell+k_2)(d_\ell+k_2-1)}{2}}C_{P,2}(r_{Q,R_{D_2}})^{1/d_{D_2}}(2\pi)^{1/2}}\varpi\nonumber\\
&+\frac{1}{(q^{1/k_2})^{k_2(k_2-1)/2}C_{P,2}(r_{Q,R_{D_2}})^{1/d_{D_2}}}\sup_{m\in\mathbb{R}}\frac{1}{|R_{D_2}(im)|}\varsigma_{\Psi}\nonumber\\
&+\sup_{m\in\mathbb{R}}\frac{|R_{D_1}(im)|}{|R_{D_2}(im)|}\frac{C_4\varpi}{(q^{1/k_2})^{\frac{(d_{D_1}+k_2)(d_{D_1}+k_2-1)}{2}}C_{P,2}(r_{Q,R_{D_2}})^{1/d_{D_2}}}\le \varpi.\label{e292b}
\end{align}

Regarding (\ref{e257b}), (\ref{e287b}), (\ref{e307b}) and (\ref{e292b}) one concludes (\ref{e130b}).
\pagebreak[2]
\begin{center}
Proof of (\ref{e131b}).
\end{center}
 We proceed to prove (\ref{e131b}). Let $w_1,w_2\in\hbox{Exp}^{q}_{(k_2,\beta,\mu,\nu)}$. We assume $\left\|w_\ell(\tau,m)\right\|_{(k_2,\beta,\mu,\nu)}\le \varpi,$ $\ell=1,2$, for some $\varpi>0$. Let $E(\tau,m)=w_1(\tau,m)-w_2(\tau,m)$. On one hand, from (\ref{e257b}) one has

\begin{align}
&\left\|\epsilon^{\Delta_\ell-d_{\ell}}\frac{\tau^{d_{\ell}}}{P_{m,2}(\tau)(q^{1/k_2})^{\frac{(d_\ell+k_2)(d_\ell+k_2-1)}{2}}}\sigma_{q,\tau}^{\delta_\ell-\frac{d_\ell}{k_2}-1}\left(\frac{1}{(2\pi)^{1/2}}\varphi_{k_2,\ell}(\tau,m,\epsilon)\star^{R_\ell}_{q;1/k_2}E(\tau,m)\right)\right\|_{(k_2,\beta,\mu,\nu)}\nonumber\\
&\le \epsilon_0^{\Delta_{\ell}-d_\ell}\frac{C_3\varsigma_\varphi C_4}{(q^{1/k_2})^{\frac{(d_\ell+k_2)(d_\ell+k_2-1)}{2}}C_{P,2}(r_{Q,R_{D_2}})^{1/d_{D_2}}(2\pi)^{1/2}}\left\|E(\tau,m)\right\|_{(k_2,\beta,\mu,\nu)}\nonumber.
\end{align}

On the other hand, (\ref{e307b}) yields

\begin{align}
&\left\|\frac{R_{D_1}(im)}{P_{m,2}(\tau)}\frac{\tau^{d_{D_1}}}{(q^{1/k_2})^{\frac{(d_{D_1}+k_2)(d_{D_1}+k_2-1)}{2}}}\sigma_{q,\tau}^{d_{D_1}\left(\frac{1}{k_1}-\frac{1}{k_2}\right)}E(\tau,m)\right\|_{(k_2,\beta,\mu,\nu)}\nonumber\\
&\le \sup_{m\in\mathbb{R}}\frac{|R_{D_1}(im)|}{|R_{D_2}(im)|}\frac{C_4}{(q^{1/k_2})^{\frac{(d_{D_1}+k_2)(d_{D_1}+k_2-1)}{2}}C_{P,2}(r_{Q,R_{D_2}})^{1/d_{D_2}}}\left\|E(\tau,m)\right\|_{(k_2,\beta,\mu,\nu)}.\nonumber
\end{align}

We choose $r_{Q,R_{D_2}}>0$, $\varsigma_\varphi>0$ such that
\begin{align}
&\sum_{\ell=1}^{D-1}\epsilon_0^{\Delta_{\ell}-d_\ell}\frac{C_3\varsigma_\varphi C_4}{(q^{1/k_2})^{\frac{(d_\ell+k_2)(d_\ell+k_2-1)}{2}}C_{P,2}(r_{Q,R_{D_2}})^{1/d_{D_2}}(2\pi)^{1/2}}\nonumber\\
&+\sup_{m\in\mathbb{R}}\frac{|R_{D_1}(im)|}{|R_{D_2}(im)|}\frac{C_4}{(q^{1/k_2})^{\frac{(d_{D_1}+k_2)(d_{D_1}+k_2-1)}{2}}C_{P,2}(r_{Q,R_{D_2}})^{1/d_{D_2}}}\le \frac{1}{2}.\nonumber
\end{align}
We conclude (\ref{e131b}). Let us finish the proof of the proposition. At this point, in view of (\ref{e130b}) and (\ref{e131b}), one can choose $\varpi>0$ such that $\overline{B}(0,\varpi)\subseteq\hbox{Exp}^{q}_{(k_2,\beta,\mu,\nu)}$, which defines a complete metric space for the norm $\left\|\cdot\right\|_{(k_2,\beta,\mu,\nu)}$. The map $\mathcal{H}^{2}_{\epsilon}$ is contractive from $\overline{B}(0,\varpi)$ into itself. The fixed point theorem states that $\mathcal{H}^{2}_{\epsilon}$ admits a unique fixed point $w_{k_2}^{d}(\tau,m,\epsilon)\in\overline{B}(0,\varpi)\subseteq\hbox{Exp}^{q}_{(k_2,\beta,\mu,\nu)}$, for every $\epsilon\in D(0,\epsilon_0)$. The construction of $w_{k_2}^{d}(\tau,m,\epsilon)$ allow us to conclude it turns out to be a solution of (\ref{e412b}).
\end{proof}
The existing link between the acceleration of $w^d_{k_1}$ and $w^d_{k_2}$ is now provided. Both functions coincide in the intersection of their domain of definition. This fact assures the extension of the acceleration of $w^d_{k_1}$ along direction $d$, with appropriate $q$-exponential growth in order to apply $q$-Laplace transformation of that order to recover the analytic solution of the main problem under study.

\begin{prop}\label{prop12}
We consider $w_{k_1}^d(\tau,m,\epsilon)$ constructed in Proposition~\ref{prop347}. The function
$$\tau\mapsto\mathcal{L}_{q;1/\kappa}^d(w^d_{k_1}(\tau,m,\epsilon)):=\mathcal{L}_{q;1/\kappa}^d(h\mapsto w^d_{k_1}(h,m,\epsilon))(\tau)$$
defines a bounded holomorphic function in $\mathcal{R}_{d,\tilde{\delta}}\cap D(0,r_1)$, for $0<r_1\le q^{\left(\frac{1}{2}-\alpha\right)/\kappa}/2$. Moreover, it holds that
\begin{equation}\label{e657}
\mathcal{L}_{q;1/\kappa}^d(w^d_{k_1}(\tau,m,\epsilon))=w_{k_2}^d(\tau,m,\epsilon),\qquad (\tau,m,\epsilon)\in  S_{d}^{b}\times \mathbb{R}\times D(0,\epsilon_0), 
\end{equation}
where $S_{d}^{b}$ is a finite sector of bisecting direction $d$.
\end{prop}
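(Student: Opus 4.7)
The plan is to establish that $W_{k_2}(\tau,m,\epsilon) := \mathcal{L}_{q;1/\kappa}^d(h\mapsto w_{k_1}^d(h,m,\epsilon))(\tau)$ satisfies the same auxiliary equation~(\ref{e412b}) as $w_{k_2}^d$ on a suitable finite subsector $S_d^b$ of bisecting direction $d$, and then to identify the two functions via the uniqueness of the fixed point of the operator $\mathcal{H}_\epsilon^2$ from Proposition~\ref{prop11}. First, since $w_{k_1}^d\in\mathrm{Exp}^{q}_{(\kappa,\beta,\mu,\alpha,\rho)}$ by Proposition~\ref{prop347}, Lemma~\ref{lema5b} applied with $Y_{k_1}=w_{k_1}^d$ produces $W_{k_2}$ as a bounded holomorphic function on $\mathcal{R}_{d,\tilde{\delta}}\cap D(0,r_1)$ for any $0<r_1\le q^{(1/2-\alpha)/\kappa}/2$, continuous in $m\in\mathbb{R}$, holomorphic in $\epsilon\in D(0,\epsilon_0)$, and obeying the $q$-exponential estimate~(\ref{e413}) of order $k_2$. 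Restricting to a finite subsector $S_d^b\subseteq \mathcal{R}_{d,\tilde{\delta}}\cap D(0,r_1)$ of bisecting direction $d$, one sees that $W_{k_2}|_{S_d^b}$ has finite norm of the type $\|\cdot\|_{(k_2,\beta,\mu,\nu)}$ on $S_d^b$.

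Next, I would apply $\mathcal{L}_{q;1/\kappa}^d$ termwise to~(\ref{e87}) and verify that $W_{k_2}$ solves~(\ref{e412b}). The computation rests on the commutation identity
$$T^{\sigma}\sigma_{q}^{j}(\mathcal{L}_{q;1/\kappa}^d f)(T) = \mathcal{L}_{q;1/\kappa}^d\!\left(\frac{x^{\sigma}}{(q^{1/\kappa})^{\sigma(\sigma-1)/2}}\,\sigma_{q}^{j-\sigma/\kappa}f\right)(T)$$
from the proposition following Definition~\ref{defi5}, combined with the arithmetic identity $1/k_1=1/k_2+1/\kappa$. For each monomial in $\tau$ appearing in~(\ref{e87}), these two ingredients transform the weights $(q^{1/k_1})^{\bullet}$ into weights $(q^{1/k_2})^{\bullet}$ and convert the dilations $\sigma_{q,\tau}^{-h/k_1}$ occurring in the convolution terms into $\sigma_{q,T}^{-h/k_2}$, yielding exactly the shape of~(\ref{e412b}). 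The forcing term matches by construction, since $\Psi_{k_2}=\mathcal{L}_{q;1/\kappa}^d(\Psi_{k_1})$ is itself obtained from Lemma~\ref{lema5b} applied with $Y_{k_1}=\Psi_{k_1}$.

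The main technical obstacle lies in the $q$-convolution terms $\varphi_{k_1,\ell}\star^{R_\ell}_{q;1/k_1}w_{k_1}^d$, where $\mathcal{L}_{q;1/\kappa}^d$ must be commuted simultaneously with the $m_1$-integral defining $\star^{R_\ell}$ and with the $\tau$-dilation. The $m_1$-integral is independent of $\tau$, so Fubini applies, justified by the exponential-weighted bounds from Proposition~\ref{prop347} together with the polynomial control on $R_\ell(im_1)$ used in Proposition~\ref{prop2}. For each monomial $c_h(m)\tau^h/(q^{1/k_1})^{h(h-1)/2}$ of $\varphi_{k_1,\ell}$, applying the commutation identity with $\sigma=h$, $j=-h/k_2$ and using $j-\sigma/\kappa=-h/k_1$ yields
$$\mathcal{L}_{q;1/\kappa}^d\!\left(\frac{\tau^h}{(q^{1/k_1})^{h(h-1)/2}}\,\sigma_{q,\tau}^{-h/k_1}f\right)(T) = \frac{T^h}{(q^{1/k_2})^{h(h-1)/2}}\,\sigma_{q,T}^{-h/k_2}(\mathcal{L}_{q;1/\kappa}^d f)(T),$$
which is exactly the factor appearing in $\varphi_{k_2,\ell}\star^{R_\ell}_{q;1/k_2}W_{k_2}$.

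To conclude, I would localize the operator $\mathcal{H}_\epsilon^2$ of~(\ref{e594}) to the Banach space of functions on $S_d^b$ with the analogous $\|\cdot\|_{(k_2,\beta,\mu,\nu)}$-type norm, and observe that both $W_{k_2}|_{S_d^b}$ and $w_{k_2}^d|_{S_d^b}$ are fixed points lying in a common closed ball (shrinking $\varsigma_\ell,\varsigma_\Psi$ or enlarging $r_{Q,R_{D_2}}$ if necessary to preserve contractivity). The uniqueness of the fixed point---the same argument as in Proposition~\ref{prop11}---then forces $W_{k_2}(\tau,m,\epsilon)=w_{k_2}^d(\tau,m,\epsilon)$ for all $(\tau,m,\epsilon)\in S_d^b\times\mathbb{R}\times D(0,\epsilon_0)$, establishing~(\ref{e657}).
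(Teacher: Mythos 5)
Your proposal is correct and follows essentially the same route as the paper: show that $\mathcal{L}_{q;1/\kappa}^d(w_{k_1}^d)$ is well defined via Lemma~\ref{lema5b}, verify through the commutation identities (in particular the commutation of $\mathcal{L}_{q;1/\kappa}^d$ with the $q$-convolution, which is the paper's identity (\ref{e683}) proved there by the change of variable $\tilde r=r/q^{n/k_1}$ and the theta relation (\ref{e245})) that it solves (\ref{e412b}), and conclude by uniqueness of the fixed point of $\mathcal{H}^2_{\epsilon}$ in a Banach space of functions on a bounded sector $S_d^b\subseteq(\mathcal{R}_{d,\tilde\delta}\cap D(0,r_1))\cap S_d$. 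The only cosmetic difference is that the paper works with the norm indexed by $\Omega=\min\{\alpha,\nu\}$ to house both functions in one ball, which your "analogous norm, shrinking constants if necessary" covers.
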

\begin{proof}
We recall from Proposition~\ref{prop347} that $w^d_{k_1}\in\hbox{Exp}^{q}_{(\kappa,\beta,\mu,\alpha,\rho)}$. This guarantees appropriate bounds on $\tau\in U_d$ in order to apply $q$-Laplace transformation of order $\kappa$ along direction $d$. This yields that for every $\tilde{\delta}>0$, the function $\mathcal{L}_{q;1/\kappa}^d(w^d_{k_1}(\tau,m,\epsilon))$ defines a bounded and holomorphic function in $\mathcal{R}_{d,\tilde{\delta}}\cap D(0,r_1)$ for $0<r_1\le q^{\left(\frac{1}{2}-\alpha\right)/\kappa}/2$. 

In order to prove that (\ref{e657}) holds, it is sufficient to prove that $\mathcal{L}_{q;1/\kappa}^d(w_{k_1}^d(\tau,m,\epsilon))$ and $w^d_{k_2}$ are both solutions of some problem, with unique solution in certain Banach space, so they must coincide. For that purpose, we multiply both sides of equation (\ref{e87}) by $\tau^{-k_1}$ and take $q$-Laplace transformation of order $\kappa$ along direction $d$.

The properties of $q$-Laplace transformation yield
\begin{equation}\label{e668}
\mathcal{L}_{q;1/\kappa}^d\left(\tau^{d_{D_1}}w_{k_1}^d(\tau,m,\epsilon)\right)=(q^{1/\kappa})^{d_{D_1}(d_{D_1}-1)/2}\tau^{d_{D_1}}\sigma_{q,\tau}^{\frac{d_{D_1}}{\kappa}}\mathcal{L}_{q;1/\kappa}^d(w_{k_1}^d)(\tau,m,\epsilon),
\end{equation}
\begin{equation}\label{e669}
\mathcal{L}_{q;1/\kappa}^d\left(\tau^{d_{D_2}}\sigma_{q,\tau}^{d_{D_2}\left(\frac{1}{k_2}-\frac{1}{k_1}\right)}w_{k_1}^d(\tau,m,\epsilon)\right)=(q^{1/\kappa})^{d_{D_2}(d_{D_2}-1)/2}\tau^{d_{D_2}}\mathcal{L}_{q;1/\kappa}^d(w_{k_1}^d)(\tau,m,\epsilon),
\end{equation}
and
\begin{multline}\label{e677}
\mathcal{L}_{q;1/\kappa}^d\left(\tau^{d_{\ell}}\sigma_{q,\tau}^{\delta_\ell+\frac{d_\ell}{k_1}-1}\left(\frac{1}{(2\pi)^{1/2}}\varphi_{k_1,\ell}(\tau,m,\epsilon)\star^{R_{\ell}}_{q;1/k_1}w_{k_1}^d(\tau,m,\epsilon)\right)\right)\\
=(q^{1/\kappa})^{d_\ell(d_\ell-1)/2}\tau^{d_{\ell}}\sigma_{q,\tau}^{\delta_{\ell}-\frac{d_\ell}{k_2}-1}\mathcal{L}_{q;1/\kappa}^d\left(\frac{1}{(2\pi)^{1/2}}\varphi_{k_1,\ell}(\tau,m,\epsilon)\star^{R_{\ell}}_{q;1/k_1}w_{k_1}^d(\tau,m,\epsilon)\right).
\end{multline}

We claim that we have
\begin{equation}\label{e683}
\mathcal{L}_{q;1/\kappa}^d\left(\varphi_{k_1,\ell}(\tau,m,\epsilon)\star^{R_{\ell}}_{q;1/k_1}w_{k_1}^d(\tau,m,\epsilon)\right)=\varphi_{k_2,\ell}(\tau,m,\epsilon)\star^{R_{\ell}}_{q;1/k_2}\mathcal{L}_{q;1/\kappa}^d(w_{k_1}^d(\tau,m,\epsilon)).
\end{equation}

This is a consequence of the change in the order of integration in the operators involved in (\ref{e683}). This situation is different from that of (60) in the proof of Proposition 12 in~\cite{lamaq}. Assume the variable of integration with respect to Laplace operator is $r$. After the change of variable $\tilde{r}=r/q^{h/k_1}$, we reduce the study to that of $\Xi$ in the proof of Proposition 12 in~\cite{lamaq}, with $r$ replaced by $r^{1-h}$. This last argument guarantees the availability of the change of order in the integration operators involved in (\ref{e683}). We now give a proof of (\ref{e683}) under this consideration.

We have
\begin{center}
$\mathcal{L}_{q;1/\kappa}^d\left(\varphi_{k_1,\ell}(\tau,m,\epsilon)\star^{R_{\ell}}_{q;1/k_1}w_{k_1}^d(\tau,m,\epsilon)\right)$
\end{center}
\vspace{-1cm}
\begin{multline*}
=\frac{1}{\pi_{q^{1/\kappa}}}\int_{0}^{\infty}\left(\varphi_{k_1,\ell}(re^{id},m,\epsilon)\star^{R_\ell}_{q;1/k_1}w_{k_1}^{d}(re^{id},m,\epsilon)\right)\frac{1}{\Theta_{q^{1/\kappa}}\left(\frac{re^{id}}{\tau}\right)}\frac{dr}{r}\\
=\frac{1}{\pi_{q^{1/\kappa}}}\int_{0}^{\infty}\left(\sum_{n\ge0}\frac{(re^{id})^n}{(q^{1/k_1})^{n(n-1)/2}}C_{\ell,n}(m,\epsilon)\star^{R_{\ell}}(\sigma_{q,\tau}^{-\frac{n}{k_1}}w^d_{k_1})(re^{id},m,\epsilon)\right)\frac{1}{\Theta_{q^{1/\kappa}}\left(\frac{re^{id}}{\tau}\right)}\frac{dr}{r}\\
=\frac{1}{\pi_{q^{1/\kappa}}}\int_{0}^{\infty}\left(\sum_{n\ge0}\frac{(re^{id})^n}{(q^{1/k_1})^{n(n-1)/2}}\int_{-\infty}^{\infty}C_{\ell,n}(m-m_1,\epsilon)R_{\ell}(im_1)w^d_{k_1}(re^{id}q^{-\frac{n}{k_1}},m_1,\epsilon)dm_1\right)\\
\qquad\times\frac{1}{\Theta_{q^{1/\kappa}}\left(\frac{re^{id}}{\tau}\right)}\frac{dr}{r}.
\end{multline*}
We make the change of variable $\tilde{r}=r/q^{n/k_1}$ to get that the previous expression equals
\begin{align*}
&\frac{1}{\pi_{q^{1/\kappa}}}\int_{0}^{\infty}\left(\int_{-\infty}^{\infty}\sum_{n\ge0}\frac{(\tilde{r}e^{id})^nq^{n^2/k_1}}{(q^{1/k_1})^{n(n-1)/2}}C_{\ell,n}(m-m_1,\epsilon)R_{\ell}(im_1)w^d_{k_1}(\tilde{r}e^{id},m_1,\epsilon)dm_1\right)\\
&\times\frac{1}{\Theta_{q^{1/\kappa}}\left(\frac{\tilde{r}e^{id}q^{n/k_1}}{\tau}\right)}\frac{d\tilde{r}}{\tilde{r}}.
\end{align*}
In view of (\ref{e245}), $k_{1}^{-1}=\kappa^{-1}+k_{2}^{-1}$, the change of order of the integrals and the dominated convergence theorem, the previous equation equals 
\begin{align*}
&=\frac{1}{\pi_{q^{1/\kappa}}}\int_{0}^{\infty}\left(\int_{-\infty}^{\infty}\sum_{n\ge0}\frac{(\tilde{r}e^{id})^nq^{n^2/k_1}}{(q^{1/k_1})^{n(n-1)/2}}C_{\ell,n}(m-m_1,\epsilon)R_{\ell}(im_1)w^d_{k_1}(\tilde{r}e^{id},m_1,\epsilon)dm_1\right)\\
&\qquad\times\frac{1}{\Theta_{q^{1/\kappa}}\left(\frac{\tilde{r}e^{id}q^{n/k_2}}{\tau}\right)q^{\frac{n(n+1)}{2\kappa}}\left(\frac{\tilde{r}e^{id}q^{n/k_2}}{\tau}\right)^{n}}\frac{d\tilde{r}}{\tilde{r}}\\
&=\frac{1}{\pi_{q^{1/\kappa}}}\int_{0}^{\infty}\left(\int_{-\infty}^{\infty}\sum_{n\ge0}\frac{\tau^n q^{n(n-1)/(2\kappa)}}{(q^{1/k_1})^{n(n-1)/2}}C_{\ell,n}(m-m_1,\epsilon)R_{\ell}(im_1)w^d_{k_1}(\tilde{r}e^{id},m_1,\epsilon)dm_1\right)\\
&\qquad\times\frac{1}{\Theta_{q^{1/\kappa}}\left(\frac{\tilde{r}e^{id}q^{n/k_2}}{\tau}\right)}\frac{d\tilde{r}}{\tilde{r}}\\
&=\int_{-\infty}^{\infty}\left(\sum_{n\ge0}\frac{\tau^n }{(q^{1/k_2})^{n(n-1)/2}}C_{\ell,n}(m-m_1,\epsilon)\right)R_{\ell}(im_1)\left[\frac{1}{\pi_{q^{1/\kappa}}}\int_{0}^{\infty}\frac{w^d_{k_1}(\tilde{r}e^{id},m_1,\epsilon)}{\Theta_{q^{1/\kappa}}\left(\frac{\tilde{r}e^{id}q^{n/k_2}}{\tau}\right)}\frac{d\tilde{r}}{\tilde{r}}\right]dm_1\\
&\qquad=\varphi_{k_2,\ell}(\tau,m,\epsilon)\star^{R_{\ell}}_{q;1/k_2}\mathcal{L}_{q;1/\kappa}^d(w_{k_1}^d(\tau,m,\epsilon)),
\end{align*}
from where we conclude (\ref{e683}).

On the other hand, we observe by direct computation that
\begin{equation}\label{e1053}
\mathcal{L}_{q;1/\kappa}^{d}(\Psi_{k_1}(\tau,m,\epsilon)) = \Psi_{k_2}(\tau,m,\epsilon)
\end{equation}
for every $(\tau,m,\epsilon) \in (\mathcal{R}_{d,\tilde{\delta}} \cap D(0,r_{1})) \times
\mathbb{R} \times D(0,\epsilon_{0})$.

In view of (\ref{e668}), (\ref{e669}), (\ref{e677}), (\ref{e683}), and the last formula above (\ref{e1053}), we derive that 
\begin{multline*}
\frac{Q(im)}{(q^{1/k_1})^{\frac{k_1(k_1-1)}{2}}}\mathcal{L}^{d}_{q;1/\kappa}(w^d_{k_1})(\tau,m,\epsilon)\\
=R_{D_1}(im)\frac{(q^{1/\kappa})^{d_{D_1}(d_{D_1}-1)/2}}{(q^{1/k_1})^{\frac{(d_{D_1}+k_1)(d_{D_1}+k_1-1)}{2}}}\tau^{d_{D_1}}\sigma_{q,\tau}^{\frac{d_{D_1}}{\kappa}}\mathcal{L}_{q;1/\kappa}^d(w_{k_1}^d)(\tau,m,\epsilon)\\
+R_{D_2}(im)\frac{(q^{1/\kappa})^{\frac{d_{D_2}(d_{D_2}-1)}{2}}}{(q^{1/k_1})^{\frac{(d_{D_2}+k_1)(d_{D_2}+k_1-1)}{2}}}\tau^{d_{D_2}}\mathcal{L}_{q;1/\kappa}^d(w_{k_1}^d)(\tau,m,\epsilon)+\frac{1}{(q^{1/k_1})^{\frac{k_1(k_1-1)}{2}}}\Psi_{k_2}(\tau,m,\epsilon)\\
+\sum_{\ell=1}^{D-1}\epsilon^{\Delta_\ell-d_\ell}\tau^{d_\ell}\frac{(q^{1/\kappa})^{\frac{d_\ell(d_{\ell}-1)}{2}}}{(q^{1/k_1})^{\frac{(d_{\ell}+k_1)(d_{\ell}+k_1-1)}{2}}}\sigma_{q,\tau}^{\delta_\ell-\frac{d_\ell}{k_2}-1}\left(\frac{1}{(2\pi)^{1/2}}\varphi_{k_2,\ell}(\tau,m,\epsilon)\star^{R_\ell}_{q;1/k_2}\mathcal{L}^{d}_{q;1/\kappa}(w^d_{k_1})(\tau,m,\epsilon)\right),
\end{multline*} 
for every $(\tau,m,\epsilon)\in (\mathcal{R}_{d,\tilde{\delta}}\cap D(0,r_1))\times\mathbb{R}\times D(0,\epsilon_0)$. We multiply at both sides of the previous equation by $(q^{1/k_1})^{k_1(k_1-1)/2}/(q^{1/k_2})^{k_2(k_2-1)/2}$. The fact that
$$\frac{(q^{1/\kappa})^{\frac{\mathcal{D}(\mathcal{D}-1)}{2}}(q^{1/k_1})^{\frac{k_1(k_1-1)}{2}}}{(q^{1/k_1})^{\frac{(\mathcal{D}+k_1)(\mathcal{D}+k_1-1)}{2}}(q^{1/k_2})^{\frac{k_2(k_2-1)}{2}}}=\frac{1}{(q^{1/k_2})^{\frac{(\mathcal{D}+k_2)(\mathcal{D}+k_2-1)}{2}}},$$
with $\mathcal{D}\in\{d_{D_1},d_{D_2},d_{\ell}\}$ entails that $\mathcal{L}_{q;1/\kappa}^d(w_{k_1}^d(\tau,m,\epsilon))$ is a solution of (\ref{e412b}) in its domain of definition.

Let $S_{d}^{b}$ be a bounded sector of bisecting direction $d$ such that $S_{d}^{b}\subseteq(\mathcal{R}_{d,\tilde{\delta}}\cap D(0,r_1))\cap S_d$, which is a nonempty set due to the assumptions on the construction of these sets. The functions $\mathcal{L}_{q;1/\kappa}^d(w_{k_1}^d(\tau,m,\epsilon))$ and $w_{k_2}^d(\tau,m,\epsilon)$ are continuous complex functions defined on $S^{b}_d\times\mathbb{R}\times D(0,\epsilon_0)$ and holomorphic with respect to $\tau$ (resp. $\epsilon$) on $S_{d}^b$ (resp. $D(0,\epsilon_0)$). 

Let $\epsilon\in D(0,\epsilon_0)$ and put $\Omega=\min\{\alpha,\nu\}$. It is straight to check that both functions belong to the complex Banach space $H_{(k_2,\beta,\mu,\Omega)}$, of all continuous functions $(\tau,m)\mapsto h(\tau,m)$, defined on $S^b_d\times\mathbb{R}$, holomorphic with respect to $\tau$ in $S^{b}_d$ such that
$$\left\|h(\tau,m)\right\|_{H_{(k_2,\beta,\mu,\Omega)}}=\sup_{\tau\in S^{b}_d,m\in\mathbb{R}}(1+|m|)^{\mu}e^{\beta|m|}\exp\left(-\frac{k_2}{2}\frac{\log^2|\tau|}{\log(q)}-\Omega\log|\tau|\right)|h(\tau,m)|$$
is finite. It holds that $\mathcal{L}_{q;1/\kappa}^d(w_{k_1}^d(\tau,m,\epsilon))$, $w_{k_2}^d(\tau,m,\epsilon)$ and $\Psi_{k_2}(\tau,m,\epsilon)$ belong to $H_{(k_2,\beta,\mu,\Omega)}$ due to Proposition~\ref{prop347}, Proposition~\ref{prop11}. As we can see in the proof of Proposition~\ref{prop11}, the operator $\mathcal{H}^{2}_{\epsilon}$ defined in (\ref{e594}) has a unique fixed point in $H_{(k_2,\beta,\mu,\Omega)}$ provided small enough constants $\varsigma_{\Psi},\varsigma_{\varphi}>0$, for $1\le \ell\le D-1$.  Indeed, this fixed point is a solution of the auxiliary problem (\ref{e412b}) in the disc $D(0,\varsigma)$ of $H_{(k_2,\beta,\mu,\Omega)}$, whilst $\mathcal{L}_{q;1/\kappa}^d(w_{k_1}^d(\tau,m,\epsilon))$, $w_{k_2}^d(\tau,m,\epsilon)$ are both solutions of the same problem, in the disc $D(0,\varsigma)$ of $H_{(k_2,\beta,\mu,\Omega)}$, so they do coincide in the domain $S_{d}^b\times\mathbb{R}\times D(0,\epsilon_0)$. Identity (\ref{e657}) follows from here.
\end{proof}

\section{Analytic solutions to a $q$-difference-differential equation}\label{sec5}

This section is devoted to determine in detail the main problem under study, and provide an analytic solution to it. It is worth mentioning that, although the techniques developed in previous sections are essentially novel, once the tools have been implemented, the procedure of construction of the solution coincides with that explained in Section 5 of~\cite{lamaq}. For the sake of completeness and a self contained work, we describe every step of the construction in detail, whilst we have decided to pass over the proofs which can be found in~\cite{lamaq}.

Let $1\le k_1<k_2$. We define $1/\kappa=1/k_1-1/k_2$ and take integers $D,D_1,D_2$ larger than 3. Let $q>1$ be a real number. We also consider positive integers $d_{D_1},d_{D_2}$, and for every $1\le \ell\le D-1$ we choose non negative integers $d_\ell,\delta_\ell\ge1$ and $\Delta_\ell\ge0$. We make the following assumptions on the previous constants:

\textbf{Assumption (A):} $\delta_1=1$ and $\delta_\ell<\delta_{\ell+1}$ for every $1\le \ell\le D-2$.

\textbf{Assumption (B):} We have
$$\Delta_\ell\ge d_\ell,\quad \frac{d_{D_1}-1}{\kappa}+\frac{d_\ell}{k_2}+1\ge\delta_\ell,\quad \frac{d_\ell}{k_1}+1\ge \delta_\ell,\quad  \frac{d_{D_2}-1}{k_2}\ge\delta_\ell-1,$$
for every $1\le \ell\le D-1$, and 
$$ k_1(d_{D_2}-1)>k_2d_{D_1}.$$

Let $Q, R_{D_1},R_{D_2}$, and $R_{\ell}$ for $1\le \ell\le D-1$ be polynomials with complex coefficients such that

\textbf{Assumption (C):} $\deg(R_{D_2})= \deg(R_{D_1})$, $\deg(Q)\ge \deg(R_{D_1})\ge \deg(R_\ell)$. Moreover, we assume $Q(im)\neq0$ and $R_{D_j}(im)\neq0$ for all $m\in\mathbb{R}$, $1\le \ell\le D-1$.

Let $S_{Q,R_{D_1}}$ and $S_{Q,R_{D_2}}$ be unbounded sectors of bisecting directions ${d_{Q,R_{D_1}}\in\mathbb{R}}$ and ${d_{Q,R_{D_2}}\in\mathbb{R}}$ respectively, with
$$S_{Q,R_{D_j}}=\left\{z\in\mathbb{C}: |z|\ge r_{Q,R_{D_j}}, |\arg(z)-d_{Q,R_{D_j}}|\le \nu_{Q,R_{D_j}}\right\},$$
for some $\nu_{Q,R_{D_j}}>0$, and such that 
$$\frac{Q(im)}{R_{D_j}(im)}\in S_{Q,R_{D_j}},$$
for every $m\in\mathbb{R}$.

\begin{defin}\label{defin111}
Let $\varsigma\ge2$ be an integer. A family $(\mathcal{E}_{p})_{0\le p\le \varsigma-1}$ is said to be a good covering in $\mathbb{C}^{\star}$ (in the $\epsilon$ plane) if the next hypotheses hold:
\begin{itemize}
\item $\mathcal{E}_{p}$ is an open sector of finite radius $\epsilon_0>0$, and vertex at the origin for every $0\le p\le\varsigma-1$.
\item $\mathcal{E}_{j}\cap\mathcal{E}_{k}\neq\emptyset$ for $0\le j,k\le \varsigma-1$ if and only if $|j-k|\le 1$ (we put $\mathcal{E}_{\varsigma}:=\mathcal{E}_{0}$).
\item $\cup_{p=0}^{\varsigma-1}\mathcal{E}_p=\mathcal{U}\setminus\{0\}$ for some neighborhood of the origin $\mathcal{U}$.
\end{itemize}
\end{defin}

\begin{defin}\label{defi2}
Let $(\mathcal{E}_{p})_{0\le p\le \varsigma-1}$ be a good covering. Let $\mathcal{T}$ be an open bounded sector with vertex at the origin and radius $r_{\mathcal{T}}>0$. Given $\alpha\in\mathbb{R}$ and $\nu\in\mathbb{R}$ we assume that
$$0<\epsilon_0,r_{\mathcal{T}}<1,\quad \nu+\frac{k_2}{\log(q)}\log(r_{\mathcal{T}})<0,\quad \alpha+\frac{\kappa}{\log(q)}\log(\epsilon_0 r_{\mathcal{T}})<0,\quad \epsilon_0r_{\mathcal{T}}\le q^{\left(\frac{1}{2}-\nu\right)/k_2}/2.$$

We consider a family of unbounded sectors $U_{\mathfrak{d}_p}$, $0\le p\le \varsigma-1$, with bisecting direction $\mathfrak{d}_p\in\mathbb{R}$, and a family of open domains $\mathcal{R}_{\mathfrak{d}_p}^b:=\mathcal{R}_{\mathfrak{d}_p,\tilde{\delta}}\cap D(0,\epsilon_0 r_{\mathcal{T}})$, with 
$$\mathcal{R}_{\mathfrak{d}_p,\tilde{\delta}}:=\left\{T\in\mathbb{C}^{\star}:\left|1+\frac{re^{i\mathfrak{d}_p}}{T}\right|>\tilde{\delta},\hbox{ for every }r\ge0\right\},$$
for some $\tilde{\delta}<1$.
We assume $\mathfrak{d}_{p}$, $0\le p\le \varsigma-1$ is chosen to satisfy the following conditions: there exist $S_{\mathfrak{d}_p}\cup \overline{D}(0,\rho)$ and $\rho>0$ such that
\begin{itemize}
\item Conditions 1), 2), Page \pageref{cond41} in Section~\ref{seccion41} hold. Observe that, under this assumption, Conditions 1), 2), Page \pageref{cond42} in Section~\ref{seccion42} hold for $S_{\mathfrak{d}_p}$.
\item For every $0\le p\le \varsigma-1$ we have $\mathcal{R}_{\mathfrak{d}_p}^{b}\cap \mathcal{R}_{\mathfrak{d}_{p+1}}^{b}\neq\emptyset$, and for every $t\in\mathcal{T}$ and $\epsilon\in \mathcal{E}_{p}$ we have $\epsilon t\in \mathcal{R}_{\mathfrak{d}_p}^{b}$ (where $\mathcal{R}_{\mathfrak{d}_\varsigma}:=\mathcal{R}_{\mathfrak{d}_0}$).
\end{itemize}
The family $\{(\mathcal{R}_{\mathfrak{d}_p,\tilde{\delta}})_{0\le p\le \varsigma-1},D(0,\rho),\mathcal{T}\}$ is said to be associated to the good covering $(\mathcal{E}_{p})_{0\le p\le \varsigma-1}$.
\end{defin}

Let $(\mathcal{E}_{p})_{0\le p\le \varsigma-1}$ be a good covering, and a family $\{(\mathcal{R}_{\mathfrak{d}_p,\tilde{\delta}})_{0\le p\le \varsigma-1},D(0,\rho),\mathcal{T}\}$ associated to it. For every $0\le p\le \varsigma-1$ we study the following equation
\begin{multline}\label{epral}
Q(\partial_z)\sigma_{q,t}u^{\mathfrak{d}_{p}}(t,z,\epsilon)\\
=(\epsilon t)^{d_{D_1}}\sigma_{q,t}^{\frac{d_{D_1}}{k_1}+1}R_{D_1}(\partial_{z})u^{\mathfrak{d}_{p}}(t,z,\epsilon)+(\epsilon t)^{d_{D_2}}\sigma_{q,t}^{\frac{d_{D_2}}{k_2}+1}R_{D_2}(\partial_{z})u^{\mathfrak{d}_{p}}(t,z,\epsilon)\\
+\sum_{\ell=1}^{D-1}\epsilon^{\Delta_\ell}t^{d_{\ell}}\sigma_{q,t}^{\delta_{\ell}}(c_{\ell}(t,z,\epsilon)R_{\ell}(\partial_z)u^{\mathfrak{d}_{p}}(t,z,\epsilon))+\sigma_{q,t}f(t,z,\epsilon).
\end{multline}

The terms $c_{\ell}(t,z,\epsilon)$ are determined as follows, for every $1\le\ell\le D-1$. Let $C_{\ell}(T,m,\epsilon)$ be the entire function in $T$, with coefficients in $E_{(\beta,\mu)}$ for some $\beta>0$ and $\mu\in  \R$, given by
$$C_{\ell}(T,m,\epsilon)=\sum_{n\ge0}C_{\ell,n}(m,\epsilon)T^n,$$
such that ${\mu-1\geq \deg(R_{D_j})}$, for $j\in \{1,2\}$. Assume this function depends holomorphically on $\epsilon\in D(0,\epsilon_0)$ and also the existence of $\tilde{C}_\ell, T_0>0$ such that the left-hand side of (\ref{e119}) holds for all $n\ge0$ and $\epsilon\in D(0,\epsilon_0)$. We put 
$$c_{\ell}(t,z,\epsilon):=\mathcal{F}^{-1}\left(m\mapsto C_{\ell}(\epsilon t,m,\epsilon)\right)(z),$$
which is a holomorphic and bounded function on $\mathcal{T}\times H_{\beta'}\times D(0,\epsilon_0)$, with $0<\beta'<\beta$. Indeed, one can substitute $\mathcal{T}$ by any bounded set in $\mathbb{C}$ in the previous product domain.

The function $f(t,z,\epsilon)$ is constructed as follows. Let $m\mapsto F_{n}(m,\epsilon)$ be a function in $E_{(\beta,\mu)}$ for every $n\ge0$, depending holomorphically on $\epsilon\in D(0,\epsilon_0)$. We also assume there exist $C_{F}, T_0$ such that (\ref{e119}) holds and define $\hat{F}(T,m,\epsilon)=\sum_{n\ge0}F_nT^n$. 

By construction, $\hat{F}(T,m,\epsilon)$ represents a holomorphic function in $T$ on the disc
$D(0,T_{0}/2)$ with values in the Banach space $E_{(\beta,\mu)}$, for all
$\epsilon \in D(0,\epsilon_{0})$. We define
$$ f(t,z,\epsilon) = \mathcal{F}^{-1}(m \mapsto \hat{F}(\epsilon t,m,\epsilon) )(z)$$
which stands for a holomorphic and bounded function on
$D(0,\epsilon_{0}T_{0}/2) \times H_{\beta'} \times D(0,\epsilon_{0})$, for all
$0 < \beta' < \beta$.

\begin{theo}\label{teo872}
Under the construction made at the beginning of this section of the elements involved in the problem (\ref{epral}), assume that the above conditions hold. Let $(\mathcal{E}_{p})_{0\le p\le\varsigma-1}$ be a good covering in $\mathbb{C}^{\star}$, for which a family $\{(\mathcal{R}_{\mathfrak{d}_{p},\tilde{\delta}})_{0\le p\le \varsigma-1},D(0,\rho),\mathcal{T}\}$ associated to this covering is considered.

Then, there exist large enough $r_{Q,R_{D_1}},r_{Q,R_{D_2}}>0$ and constants $\varsigma_{\Psi}>0$ and $\varsigma_\varphi>0$ such that if
$$\tilde{C}_\ell\le \varsigma_{\varphi},\qquad C_{\Psi_1}\le \varsigma_{\psi}$$ 
for all $1\le \ell\le D-1$, then for every $0\le p\le\varsigma-1$, one can construct a solution $u^{\mathfrak{d}_{p}}(t,z,\epsilon)$ of (\ref{epral}),  which defines a holomorphic function on $\mathcal{T}\times H_{\beta'}\times\mathcal{E}_p$, for every $0<\beta'<\beta$.
\end{theo}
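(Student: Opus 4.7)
The plan is to construct $u^{\mathfrak{d}_p}$ by following the diagram in the introduction: Fourier in $z$, then two successive $q$-Borel--Laplace steps of orders $k_1$ and $k_2$ in $T$. I set $u^{\mathfrak{d}_p}(t,z,\epsilon):=\mathcal{F}^{-1}(m\mapsto U^{\mathfrak{d}_p}(\epsilon t,m,\epsilon))(z)$, where $U^{\mathfrak{d}_p}$ is still to be built. By Proposition~\ref{prop490}, multiplication by $im$ on the $m$-side corresponds to $\partial_z$ and the convolution $\star$ to the product with $c_\ell$; hence $u^{\mathfrak{d}_p}$ will solve (\ref{epral}) as soon as $U^{\mathfrak{d}_p}(T,m,\epsilon)$ satisfies (\ref{e59}) as an $E_{(\beta,\mu)}$-valued holomorphic function of $T$ on $\mathcal{R}_{\mathfrak{d}_p,\tilde{\delta}}\cap D(0,r_1)$. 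The restrictions collected in Definition~\ref{defi2} on $r_{\mathcal{T}}$, $\epsilon_0$, $\alpha$ and $\nu$ are precisely designed so that $\epsilon t$ lies in this latter domain as $(t,\epsilon)$ ranges through $\mathcal{T}\times\mathcal{E}_p$.

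\textbf{First and second Borel levels.} I apply Proposition~\ref{prop347}, with $r_{Q,R_{D_1}}$ chosen large enough and $\varsigma_\Psi$, $\varsigma_\ell$ chosen small enough so that the fixed-point inclusion and contraction estimates hold, producing a solution $w^{\mathfrak{d}_p}_{k_1}(\tau,m,\epsilon)\in\hbox{Exp}^{q}_{(\kappa,\beta,\mu,\alpha,\rho)}$ of the first auxiliary equation (\ref{e87}). Its $q$-Laplace transform of order $\kappa$ along $\mathfrak{d}_p$ is handled by Lemma~\ref{lema5b}, which yields a continuous extension on $\mathcal{R}_{\mathfrak{d}_p,\tilde{\delta}}$ with $q$-exponential growth of order $k_2$ in $\tau$. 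Proposition~\ref{prop11} then provides, under the same smallness assumptions and with $r_{Q,R_{D_2}}$ large enough, a solution $w^{\mathfrak{d}_p}_{k_2}(\tau,m,\epsilon)\in\hbox{Exp}^q_{(k_2,\beta,\mu,\nu)}$ of the second auxiliary equation (\ref{e412b}) on the full infinite sector $S_{\mathfrak{d}_p}$; and Proposition~\ref{prop12} identifies $\mathcal{L}^{\mathfrak{d}_p}_{q;1/\kappa}(w^{\mathfrak{d}_p}_{k_1})$ with $w^{\mathfrak{d}_p}_{k_2}$ on the overlap of their domains. Gluing the two representations yields a single function with the required $q$-Gevrey growth of order $k_2$ on $S_{\mathfrak{d}_p}$.

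\textbf{Third step and verification.} I then form $U^{\mathfrak{d}_p}(T,m,\epsilon):=\mathcal{L}^{\mathfrak{d}_p}_{q;1/k_2}(\tau\mapsto w^{\mathfrak{d}_p}_{k_2}(\tau,m,\epsilon))(T)$. By Lemma~\ref{lem3}, $U^{\mathfrak{d}_p}$ is bounded and holomorphic in $T\in\mathcal{R}_{\mathfrak{d}_p,\tilde{\delta}}\cap D(0,r_1)$ with values in $E_{(\beta,\mu)}$, and the inverse Fourier transform produces the expected bounded holomorphic function $u^{\mathfrak{d}_p}(t,z,\epsilon)$ on $\mathcal{T}\times H_{\beta'}\times\mathcal{E}_p$. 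It remains to check that $U^{\mathfrak{d}_p}$ really solves (\ref{e59}). For this I multiply (\ref{e412b}) by $\tau^{-k_2}$ and apply $\mathcal{L}^{\mathfrak{d}_p}_{q;1/k_2}$; the commutation identities stated after Definition~\ref{defi5} convert $\tau^{\sigma}\sigma_{q,\tau}^{j}$ on the Borel side into the correct $T^{\sigma}\sigma_{q,T}^{j+\sigma/k_2}$ on the Laplace side, and the construction (\ref{e1140}) of $F^{\mathfrak{d}_p}$ gives exactly $\mathcal{L}^{\mathfrak{d}_p}_{q;1/k_2}(\Psi^{\mathfrak{d}_p}_{k_2})=F^{\mathfrak{d}_p}$ as the forcing term. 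The main obstacle is to commute $\mathcal{L}^{\mathfrak{d}_p}_{q;1/k_2}$ through the $q$-convolution $\varphi_{k_2,\ell}\star^{R_\ell}_{q;1/k_2}$ in the sum on the right-hand side of (\ref{e412b}): this requires a Fubini-type exchange of the $q$-Laplace integral with the $m_1$-integral and with the inner dilation $\sigma^{-h/k_2}_{q,\tau}$, entirely analogous to the argument establishing (\ref{e683}) in Proposition~\ref{prop12}. The exchange is justified by the $q$-Gevrey lower bound (\ref{eq3}) for $\Theta_{q^{1/k_2}}$ combined with the at-most $q$-exponential growth of order $k_2$ of $w^{\mathfrak{d}_p}_{k_2}$ coming from Proposition~\ref{prop11}, after which matching the factors of the form $(q^{1/k_2})^{(\mathcal{D}+k_2)(\mathcal{D}+k_2-1)/2}$ for $\mathcal{D}\in\{d_{D_1},d_{D_2},d_\ell\}$ reconstitutes (\ref{e59}) exactly.
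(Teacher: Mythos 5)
Your proposal is correct and follows essentially the same route as the paper's proof: construct $w^{\mathfrak{d}_p}_{k_1}$ via Proposition~\ref{prop347}, pass to $w^{\mathfrak{d}_p}_{k_2}$ via Lemma~\ref{lema5b}, Proposition~\ref{prop11} and the identification in Proposition~\ref{prop12}, then apply $\mathcal{L}^{\mathfrak{d}_p}_{q;1/k_2}$ and the inverse Fourier transform. If anything, your verification that $U^{\mathfrak{d}_p}$ solves (\ref{e59}) (the commutation identities and the Fubini-type exchange analogous to (\ref{e683})) is spelled out more explicitly than in the paper, which simply invokes "the properties of $q$-Laplace transformation" at that point.
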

\begin{proof}
Let $0\le p\le \varsigma-1$ and consider the equation

\begin{align}
Q(im)\sigma_{q,T}U^{\mathfrak{d}_{p}}(T,m,\epsilon)=T^{d_{D_1}}\sigma_{q,T}^{\frac{d_{D_1}}{k_1}+1}R_{D_1}(im)U^{\mathfrak{d}_{p}}(T,m,\epsilon)+T^{d_{D_2}}\sigma_{q,T}^{\frac{d_{D_2}}{k_2}+1}R_{D_2}(im)U^{\mathfrak{d}_{p}}(T,m,\epsilon)\nonumber\\
+\sum_{\ell=1}^{D-1}\epsilon^{\Delta_\ell-d_\ell}T^{d_{\ell}}\sigma_{q,T}^{\delta_{\ell}}\left(\frac{1}{(2\pi)^{1/2}}\int_{-\infty}^{+\infty}C_{\ell}(T,m-m_1,\epsilon)R_\ell(im_1)U^{\mathfrak{d}_{p}}(T,m_1,\epsilon)dm_1\right)\label{e59c}\\
+\sigma_{q,T}\hat{F}(T,m,\epsilon).\nonumber
\end{align}
Under an appropriate choice of the constants $\varsigma_{\Psi}$ and $\varsigma_{\varphi}$ one can follow the construction in Section~\ref{seccion41} and apply Proposition~\ref{prop347} to obtain a solution $U^{\mathfrak{d}_{p}}(T,m,\epsilon)$ of (\ref{e59c}).

Regarding the properties of $q$-Laplace transformation, and from the results obtained in Section~\ref{seccion42}, $U^{\mathfrak{d}_{p}}(T,m,\epsilon)$ is the $q$-Laplace transformation of order $k_2$ of a function $w_{k_2}^{\mathfrak{d}_p}$ along direction $\mathfrak{d}_{p}$, which depends on $T$. Indeed, 
\begin{equation}\label{e861}
U^{\mathfrak{d}_p}(T,m,\epsilon)=\frac{1}{\pi_{q^{1/k_2}}}\int_{L_{\mathfrak{d}_p}}\frac{w^{\mathfrak{d}_p}_{k_2}(u,m,\epsilon)}{\Theta_{q^{1/k_2}}\left(\frac{u}{T}\right)}\frac{du}{u},
\end{equation}
for some $L_{\mathfrak{d}_p}\subseteq S_{\mathfrak{d}_{p}}\cup\{0\}$, and $w_{k_2}^{\mathfrak{d}_p}(\tau,m,\epsilon)$ defines a continuous function on $S_{\mathfrak{d}_p}\times\mathbb{R}\times D(0,\epsilon_0)$, and holomorphic with respect to $(\tau,\epsilon)$ in $S_{\mathfrak{d}_p}\times D(0,\epsilon_0)$. In addition to this, there exists $C_{w_{k_2}^{\mathfrak{d}_p}}>0$ such that
\begin{equation}\label{e865}
|w_{k_2}^{\mathfrak{d}_p}(\tau,m,\epsilon)|\le C_{w_{k_2}^{\mathfrak{d}_p}}\frac{1}{(1+|m|)^{\mu}}e^{-\beta|m|}\exp\left(\frac{k_2}{2\log(q)}\log^2|\tau|+\nu\log|\tau|\right),
\end{equation}
for some $\nu\in\mathbb{R}$. This holds for $\tau\in S_{\mathfrak{d}_{p}}$, $m\in\mathbb{R}$, and $\epsilon\in D(0,\epsilon_0)$. Moreover, in view of Proposition~\ref{prop12}, the function $w_{k_2}^{\mathfrak{d}_p}(\tau,m,\epsilon)$ and the $q$-Laplace transformation of order $\kappa$ of the function $w_{k_1}^{\mathfrak{d}_p}(\tau,m,\epsilon)$ along direction $\mathfrak{d}_{p}^{1}$, where $e^{i\mathfrak{d}_{p}^1}\mathbb{R}_+\subseteq S_{\mathfrak{d}_{p}}\cup\{0\}$, depending on $\tau$, coincide in $(S_{\mathfrak{d}_{p}}\cap D(0,r_1))\times\mathbb{R}\times D(0,\epsilon_0)$, for $0<r_1\le q^{\left(\frac{1}{2}-\alpha\right)/\kappa}/2$, for some $\alpha\in\mathbb{R}$. The function $w_{k_1}^{\mathfrak{d}_p}(\tau,m,\epsilon)$ is such that
\begin{equation}\label{e869}
|w_{k_1}^{\mathfrak{d}_p}(\tau,m,\epsilon)|\le C_{w_{k_1}^{\mathfrak{d}_p}}\frac{1}{(1+|m|)^{\mu}}e^{-\beta|m|}\exp\left(\frac{\kappa}{2\log(q)}\log^2|\tau+\delta|+\alpha\log|\tau+\delta|\right),
\end{equation}
for some $C_{w_{k_1}^{\mathfrak{d}_p}},\delta>0$, valid for $\tau\in (D(0,\rho)\cup U_{\mathfrak{d}_p})$, $m\in\mathbb{R}$ and $\epsilon\in D(0,\epsilon_0)$. This function is the extension of a function $w_{k_1}(\tau,m,\epsilon)$, common for every $0\le p\le \varsigma-1$, continuous on $\overline{D}(0,\rho)\times\mathbb{R}\times D(0,\epsilon_0)$ and holomorphic with respect to $(\tau,\epsilon)$ in $D(0,\rho)\times D(0,\epsilon_0)$.

The bounds in (\ref{e865}) with respect to $m$ variable are transmitted to $U^{\mathfrak{d}_{p}}(T,m,\epsilon)$ as defined in (\ref{e861}). This allows to define the function
\begin{align*}
u^{\mathfrak{d}_{p}}(t,z,\epsilon)&:=\mathcal{F}^{-1}(m\mapsto U^{\mathfrak{d}_{p}}(\epsilon t,m,\epsilon))(z)\nonumber\\
&=\frac{1}{(2\pi)^{1/2}}\frac{1}{\pi_q^{1/k_2}}\int_{-\infty}^{\infty}\int_{L_{\mathfrak{d}_p}}\frac{w_{k_2}^{\mathfrak{d}_p}(u,m,\epsilon)}{\Theta_{q^{1/k_2}}\left(\frac{u}{\epsilon t}\right)}\frac{du}{u}\exp(izm)dm,
\end{align*}
which turns out to be holomorphic on $\mathcal{T}\times H_{\beta'}\times\mathcal{E}_p$. The properties of inverse Fourier transform allow us to conclude that $u^{\mathfrak{d}_{p}}(t,z,\epsilon)$ is a solution of equation (\ref{epral}) defined on $\mathcal{T}\times H_{\beta'}\times \mathcal{E}_{p}$.
\end{proof}

\begin{prop}\label{prop925}
Let $0\le p \le \varsigma -1$. Under the hypotheses of Theorem~\ref{teo872}, assume that the unbounded sectors $U_{\mathfrak{d}_{p}}$ and $U_{\mathfrak{d}_{p+1}}$ are wide enough so that $U_{\mathfrak{d}_{p}}\cap U_{\mathfrak{d}_{p+1}}$ contains the sector $U_{\mathfrak{d}_{p},\mathfrak{d}_{p+1}}=\{\tau\in\C^{\star}:\arg(\tau)\in[\mathfrak{d}_{p},\mathfrak{d}_{p+1}]\}$. Then, there exist $K_1>0$ and $K_2\in\R$ such that
\begin{equation}\label{e927}
|u^{\mathfrak{d}_{p+1}}(t,z,\epsilon)-u^{\mathfrak{d}_{p}}(t,z,\epsilon)|\le K_{1}\exp\left(-\frac{k_2}{2\log(q)}\log^2|\epsilon|\right)|\epsilon|^{K_2},
\end{equation}
for every $t\in\mathcal{T}$, $z\in H_{\beta'}$, and $\epsilon\in\mathcal{E}_{p}\cap\mathcal{E}_{p+1}$. 

\end{prop}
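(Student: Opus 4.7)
The plan is a contour-deformation argument that extracts the factor $\exp\bigl(-k_2\log^2|\epsilon|/(2\log q)\bigr)$ from the theta lower bound~\eqref{eq3}. From the proof of Theorem~\ref{teo872},
$$u^{\mathfrak{d}_p}(t,z,\epsilon) = \frac{1}{(2\pi)^{1/2}\pi_{q^{1/k_2}}}\int_{-\infty}^{\infty}\int_{L_{\mathfrak{d}_p}} \frac{w_{k_2}^{\mathfrak{d}_p}(u,m,\epsilon)}{\Theta_{q^{1/k_2}}(u/(\epsilon t))}\,\frac{du}{u}\,e^{izm}\,dm,$$
so $u^{\mathfrak{d}_{p+1}}-u^{\mathfrak{d}_p}$ is the inverse Fourier transform in $m$ of a difference of two $q$-Laplace integrals along $L_{\mathfrak{d}_{p+1}}$ and $L_{\mathfrak{d}_p}$.

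First I would exploit the hypothesis $U_{\mathfrak{d}_p,\mathfrak{d}_{p+1}}\subseteq U_{\mathfrak{d}_p}\cap U_{\mathfrak{d}_{p+1}}$ to glue $w_{k_1}^{\mathfrak{d}_p}$ and $w_{k_1}^{\mathfrak{d}_{p+1}}$ into a single holomorphic function on $D(0,\rho)\cup U_{\mathfrak{d}_p,\mathfrak{d}_{p+1}}$ by uniqueness of analytic continuation from the common $w_{k_1}$ on $D(0,\rho)$ produced by Proposition~\ref{prop347}. Applying Proposition~\ref{prop12} in both directions then yields a common holomorphic function $w_{k_2}$ on $S_{\mathfrak{d}_p}\cup S_{\mathfrak{d}_{p+1}}$ extending both $w_{k_2}^{\mathfrak{d}_p}$ and $w_{k_2}^{\mathfrak{d}_{p+1}}$. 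Since $\epsilon t\in\mathcal{R}^b_{\mathfrak{d}_p}\cap \mathcal{R}^b_{\mathfrak{d}_{p+1}}$, the poles of $\Theta_{q^{1/k_2}}(\cdot/(\epsilon t))^{-1}$ stay outside the sector $U_{\mathfrak{d}_p,\mathfrak{d}_{p+1}}$, and Cauchy's theorem permits the deformation: closing at radius $R\to\infty$ (whose contribution vanishes by the compatibility of~\eqref{e865} with~\eqref{eq3} once $|\epsilon|$ is small) gives
$$\int_{L_{\mathfrak{d}_{p+1}}}-\int_{L_{\mathfrak{d}_p}} \;=\; \int_{C(0,r_0,\mathfrak{d}_p\to\mathfrak{d}_{p+1})}\frac{w_{k_2}(u,m,\epsilon)}{u\,\Theta_{q^{1/k_2}}(u/(\epsilon t))}\,du,$$
for some fixed $r_0>1$ chosen in the common sector.

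On the arc $|u|=r_0$, applying~\eqref{eq3} with $x=u/(\epsilon t)$ together with the identity
$$\log^2|u|-\log^2\bigl|u/(\epsilon t)\bigr| = 2\log|u|\log|\epsilon t|-\log^2|\epsilon t|,$$
and the growth bound~\eqref{e865} on $w_{k_2}$, bounds the integrand by a constant multiple of
$$\frac{e^{-\beta|m|}}{(1+|m|)^\mu}\,|\epsilon t|^{1/2}\,r_0^{\nu-3/2+k_2\log|\epsilon t|/\log q}\,\exp\!\left(-\frac{k_2\log^2|\epsilon t|}{2\log q}\right).$$
Integrating in $u$ over the arc (of length $\le r_0|\mathfrak{d}_{p+1}-\mathfrak{d}_p|$), pairing with $|e^{izm}|\le e^{\beta'|m|}$ and integrating in $m$ (convergent since $\beta>\beta'$), and absorbing the discrepancy between $\log^2|\epsilon t|$ and $\log^2|\epsilon|$ (of the form $2\log|\epsilon|\log|t|+\log^2|t|$ with $\log|t|<0$ since $t\in\mathcal{T}$ and $r_{\mathcal{T}}<1$) into a polynomial factor $|\epsilon|^{K_2}$ with $K_2\in\R$, delivers the stated estimate for $u^{\mathfrak{d}_{p+1}}-u^{\mathfrak{d}_p}$. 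The bound on $f^{\mathfrak{d}_{p+1}}-f^{\mathfrak{d}_p}$ is obtained by the identical scheme applied to the representation~\eqref{e821}--\eqref{e1140}, with $w_{k_2}$ replaced by $\Psi_{k_2}$ and the common function on $D(0,\rho)$ being $\Psi_{k_1}$.

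The main obstacle is the careful justification of the Cauchy deformation: that no pole of $\Theta_{q^{1/k_2}}(\cdot/(\epsilon t))^{-1}$ is swept across (ensured by the definition of $\mathcal{R}_{\mathfrak{d}_p,\tilde\delta}$), that the $R\to\infty$ closure truly vanishes (requiring $|\epsilon|$ small enough that $k_2\log|\epsilon t|/\log q$ dominates $\nu-1/2$ in the $R$-exponent), and that Fubini applies to swap the $m$-integration with the $u$-deformation. These technicalities follow the template of Proposition 13 of~\cite{lamaq} and carry over with only the obvious notational modifications.
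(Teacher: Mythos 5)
Your opening move (gluing $w_{k_1}^{\mathfrak{d}_p}$ and $w_{k_1}^{\mathfrak{d}_{p+1}}$ through the common germ on $D(0,\rho)$, then passing to the second level via Proposition~\ref{prop12}) and your $\log^2$ computation on the arc are both in the spirit of the paper's proof. But the contour geometry at the heart of your argument does not work. The identity you assert,
$$\int_{L_{\mathfrak{d}_{p+1}}}-\int_{L_{\mathfrak{d}_p}}=\int_{C(0,r_0,\mathfrak{d}_p\to\mathfrak{d}_{p+1})}\frac{w_{k_2}(u,m,\epsilon)}{u\,\Theta_{q^{1/k_2}}(u/(\epsilon t))}\,du,\qquad r_0>1,$$
requires the glued integrand to be holomorphic on the \emph{entire} unbounded sector $\{\arg u\in[\mathfrak{d}_p,\mathfrak{d}_{p+1}]\}$: you need it on $\{|u|\le r_0\}$ to push the inner segments out to the arc, and on $\{|u|\ge r_0\}$ to close the tails at infinity. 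Neither is available. The gluing only produces a function on $S_{\mathfrak{d}_p}\cup S_{\mathfrak{d}_{p+1}}$ together with the bounded region $\mathcal{R}^b_{\mathfrak{d}_p}\cup\mathcal{R}^b_{\mathfrak{d}_{p+1}}\subseteq D(0,\epsilon_0 r_{\mathcal{T}})$ with $\epsilon_0 r_{\mathcal{T}}<1<r_0$; the hypothesis of the proposition concerns only the first-level sectors $U_{\mathfrak{d}_j}$, and at moderate and large radius the wedge between $S_{\mathfrak{d}_p}$ and $S_{\mathfrak{d}_{p+1}}$ generically contains roots $q_{\ell,2}(m)$ of $P_{m,2}$, which obstruct the continuation of $w_{k_2}$. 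Worse, your identity is internally inconsistent: if the integrand really were holomorphic in the full sector with vanishing closures at $0$ and $\infty$, Cauchy's theorem would force the difference of the two ray integrals to be \emph{zero}, not an arc integral at a fixed radius.

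The repair, which is what the paper does, is to deform only the bounded inner portions $[0,\widetilde{\rho}]e^{i\mathfrak{d}_j}$ with $\widetilde{\rho}<\epsilon_0 r_{\mathcal{T}}$ (where the glued function $w_{k_2}^{\mathfrak{d}_p,\mathfrak{d}_{p+1}}$ does exist and is bounded) into the arc $\mathcal{C}_{\widetilde{\rho},\mathfrak{d}_p,\mathfrak{d}_{p+1}}$, and to estimate the two unbounded tails $\int_{L_{\mathfrak{d}_j,\widetilde{\rho}}}$ \emph{separately}, with no cancellation between them. Each tail individually carries the factor $\exp\bigl(-\tfrac{k_2}{2\log q}\log^2|\epsilon|\bigr)|\epsilon|^{K}$, precisely because the $q$-exponential growth order of $w_{k_2}^{\mathfrak{d}_j}$ in \eqref{e865} and the decay order of $1/\Theta_{q^{1/k_2}}(u/(\epsilon t))$ in \eqref{eq3} are the same $k_2$: the $\log^2|u|$ terms cancel exactly and leave $\exp\bigl(-\tfrac{k_2}{2\log q}\log^2|\epsilon t|\bigr)\,|u|^{k_2\log|\epsilon t|/\log q+\nu-3/2}$, whose $|u|$-exponent is negative by the inequalities of Definition~\ref{defi2}, so the tail integral converges and has the right size. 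This is the key quantitative observation your proposal misses, and it is what makes the (unavailable) cancellation of the tails unnecessary.
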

\begin{proof}
Let $0\le p \le \varsigma -1$. Taking into account that $U_{\mathfrak{d}_{p},\mathfrak{d}_{p+1}}\subseteq U_{\mathfrak{d}_{p}}\cap U_{\mathfrak{d}_{p+1}}$, we observe from the construction of the functions $U^{\mathfrak{d}_{p}}$ and $U^{\mathfrak{d}_{p+1}}$ that $\mathcal{L}^{\mathfrak{d}_{p}}_{q;1/\kappa}(w_{k_1}^{\mathfrak{d}_{p}})(\tau,m,\epsilon)$ and $\mathcal{L}^{\mathfrak{d}_{p+1}}_{q;1/\kappa}(w_{k_1}^{\mathfrak{d}_{p+1}})(\tau,m,\epsilon)$ coincide in the domain $(\mathcal{R}_{\mathfrak{d}_{p}}^{b}\cap \mathcal{R}_{\mathfrak{d}_{p+1}}^{b})\times \R\times D(0,\epsilon_0)$. This entails the existence of $w_{k_2}^{\mathfrak{d}_{p},\mathfrak{d}_{p+1}}(\tau,m,\epsilon)$, holomorphic with respect to $\tau$ on $\mathcal{R}_{\mathfrak{d}_{p}}^{b}\cup \mathcal{R}_{\mathfrak{d}_{p+1}}^{b}$, continuous with respect to $m\in\R$ and holomorphic with respect to $\epsilon$ in $D(0,\epsilon_0)$ which coincides with $\mathcal{L}^{\mathfrak{d}_{p}}_{q;1/\kappa}(w_{k_1}^{\mathfrak{d}_{p}})(\tau,m,\epsilon)$ on $\mathcal{R}^{b}_{\mathfrak{d}_{p}}\times\R\times D(0,\epsilon_0)$ and also with $\mathcal{L}^{\mathfrak{d}_{p+1}}_{q;1/\kappa}(w_{k_1}^{\mathfrak{d}_{p+1}})(\tau,m,\epsilon)$ on $\mathcal{R}^{b}_{\mathfrak{d}_{p+1}}\times\R\times D(0,\epsilon_0)$.

Let $\widetilde{\rho}>0$ be such that $\widetilde{\rho}e^{i\mathfrak{d}_{p}}\in\mathcal{R}^{b}_{\mathfrak{d}_{p}}$ and $\widetilde{\rho}e^{i\mathfrak{d}_{p+1}}\in\mathcal{R}^{b}_{\mathfrak{d}_{p+1}}$. The function 
$$u\mapsto \frac{w_{k_2}^{\mathfrak{d}_{p},\mathfrak{d}_{p+1}}(u,m,\epsilon)}{\Theta_{q^{1/k_2}}\left(\frac{u}{\epsilon t}\right)}$$
is holomorphic on $\mathcal{R}^{b}_{\mathfrak{d}_{p}}\cup \mathcal{R}^{b}_{\mathfrak{d}_{p+1}}$ for all $(m,\epsilon)\in\R\times (\mathcal{E}_{p}\cap \mathcal{E}_{p+1})$  and its integral along the closed path constructed by concatenation of the segment starting at the origin and with ending point fixed at $\widetilde{\rho}e^{i\mathfrak{d}_{p}}$, the arc of circle with radius $\widetilde{\rho}$ connecting $\widetilde{\rho}e^{i\mathfrak{d}_{p}}$ with ${\widetilde{\rho}e^{i\mathfrak{d}_{p+1}}\subseteq\mathcal{R}^{b}_{\mathfrak{d}_{p+1}}}$, and the segment from $\widetilde{\rho}e^{i\mathfrak{d}_{p+1}}$ to 0, vanishes. The difference $u^{\mathfrak{d}_{p+1}}-u^{\mathfrak{d}_{p}}$ can be written in the form

\begin{multline}
u^{\mathfrak{d}_{p+1}}(t,z,\epsilon)-u^{\mathfrak{d}_{p}}(t,z,\epsilon) \\
=\frac{1}{(2\pi)^{1/2}}\frac{1}{\pi_{q^{1/k_2}}}\int_{-\infty}^{\infty}\int_{L_{\mathfrak{d}_{p+1},\widetilde{\rho}}}\frac{w^{\mathfrak{d}_{p+1}}_{k_2}(u,m,\epsilon)}{\Theta_{q^{1/k_2}}\left(\frac{u}{\epsilon t}\right)}\exp(izm)\frac{du}{u}dm,\hspace{3cm}\\ 
-\frac{1}{(2\pi)^{1/2}}\frac{1}{\pi_{q^{1/k_2}}}\int_{-\infty}^{\infty}\int_{L_{\mathfrak{d}_p,\widetilde{\rho}}}\frac{w^{\mathfrak{d}_{p}}_{k_2}(u,m,\epsilon)}{\Theta_{q^{1/k_2}}\left(\frac{u}{\epsilon t}\right)}\exp(izm)\frac{du}{u}dm\\
+\frac{1}{(2\pi)^{1/2}}\frac{1}{\pi_{q^{1/k_2}}}\int_{-\infty}^{\infty}\int_{\mathcal{C}_{\widetilde{\rho},\mathfrak{d}_p,\mathfrak{d}_{p+1}}}\frac{w_{k_2}^{\mathfrak{d}_p,\mathfrak{d}_{p+1}}(u,m,\epsilon)}{\Theta_{q^{1/k_2}}\left(\frac{u}{\epsilon t}\right)}\exp(izm)\frac{du}{u}dm,\label{e943}
\end{multline}
where $L_{\mathfrak{d}_{j},\widetilde{\rho}}=[\widetilde{\rho},+\infty)e^{i\mathfrak{d}_{j}}$ for $j\in\{p,p+1\}$ and $\mathcal{C}_{\widetilde{\rho},\mathfrak{d}_p,\mathfrak{d}_{p+1}}$ is the arc of circle connecting $\widetilde{\rho}e^{i\mathfrak{d}_{p}}$ with $\widetilde{\rho}e^{i\mathfrak{d}_{p+1}}$ (see Figure \ref{fig1}).

\begin{figure}[h]
	\centering
		\label{fig:conf0}
		\includegraphics[width=0.50\textwidth]{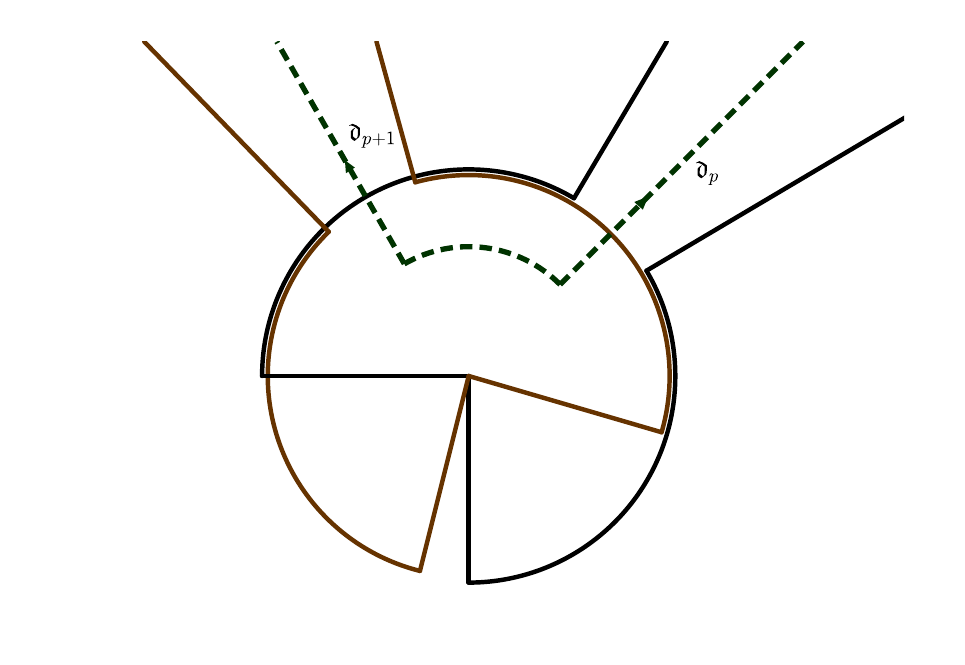}
	\caption{Deformation of the path of integration, first case.}\label{fig1}
\end{figure}

Let us put
$$I_{1}:=\left|\frac{1}{(2\pi)^{1/2}}\frac{1}{\pi_{q^{1/k_2}}}\int_{-\infty}^{\infty}\int_{L_{\mathfrak{d}_{p+1},\widetilde{\rho}}}\frac{w^{\mathfrak{d}_{p+1}}_{k_2}(u,m,\epsilon)}{\Theta_{q^{1/k_2}}\left(\frac{u}{\epsilon t}\right)}\exp(izm)\frac{du}{u}dm\right|.$$
In view of (\ref{e865}) and (\ref{eq3}), one has
\begin{align*}
I_{1}&\le \frac{C_{w_{k_2}^{\mathfrak{d}_{p+1}}}}{C_{q,k_2}\tilde{\delta}(2\pi)^{1/2}}\frac{|\epsilon t|^{1/2}}{\pi_{q^{1/k_2}}}\int_{-\infty}^{\infty}e^{-\beta|m|-m\Im(z)}\frac{dm}{(1+|m|)^{\mu}}\nonumber\\
&\times\int_{\widetilde{\rho}}^{\infty}\exp\left(\frac{k_2\log^2|u|}{2\log(q)}+\nu\log|u|\right)|u|^{-3/2}\exp\left(-\frac{k_2\log^2\left(\frac{|u|}{|\epsilon t|}\right)}{2\log(q)}\right)d|u|.
\end{align*}
We recall that we have restricted the domain on the variable $z$ such that $|\Im(z)|\le\beta'<\beta$. Then, the first integral in the previous expression in convergent, and one derives
$$I_{1}\le \frac{\tilde{C}_{w_{k_2}^{\mathfrak{d}_{p+1}}}}{(2\pi)^{1/2}}\frac{ (\epsilon_0  r_{\mathcal{T}})^{1/2}}{\pi_{q^{1/k_2}}}\int_{\widetilde{\rho}}^{\infty}\exp\left(\frac{k_2\log^2|u|}{2\log(q)}\right)\exp\left(-\frac{k_2\log^2\left(\frac{|u|}{|\epsilon t|}\right)}{2\log(q)}\right)|u|^{\nu-3/2}d|u|,$$
for some $\tilde{C}_{w_{k_2}^{\mathfrak{d}_{p+1}}}>0$.
We derive
\begin{align*}
\exp\left(\frac{k_2\log^2|u|}{2\log(q)}\right)\exp\left(-\frac{k_2\log^2\left(\frac{|u|}{|\epsilon t|}\right)}{2\log(q)}\right)&=\exp\left(\frac{k_2}{2\log(q)}(-\log^2|\epsilon|-2\log|\epsilon|\log|t|-\log^2|t|)\right)\\
&\times \exp\left(\frac{k_2}{\log(q)}(\log|u|\log|\epsilon|+\log|u|\log|t|)\right).
\end{align*}

From the assumption that $0<\epsilon_0<1$ and $0<r_{\mathcal{T}}<1$, we get
\begin{align}\label{e964}
\exp\left(-\frac{k_2}{\log(q)}\log|\epsilon|\log|t|\right)\le|\epsilon|^{-\frac{k_2}{\log(q)}\log(r_{\mathcal{T}})},\\ \exp\left(\frac{k_2}{\log(q)}\log|u|\log|\epsilon|\right)\le |\epsilon|^{\frac{k_2}{\log(q)}\log(\widetilde{\rho})},\nonumber
\end{align}
for $t\in\mathcal{T}$, $\epsilon\in\mathcal{E}_{p}\cap\mathcal{E}_{p+1}$, $|u|\ge \widetilde{\rho},$ and also
\begin{multline}
\exp\left(\frac{k_2}{\log(q)}\log|u|\log|t|\right)\le|t|^{\frac{k_2}{\log(q)}\log(\widetilde{\rho})}, \hbox{ if } \widetilde{\rho}\le |u|\le 1 \\
\exp\left(\frac{k_2}{\log(q)}\log|u|\log|t|\right)\le |u|^{\frac{k_2}{\log(q)}\log(r_{\mathcal{T}})},\hbox{ if } |u|\ge 1,\label{e968}
\end{multline}
for $t\in\mathcal{T}$. In addition to that, there exists $K_{k_2,\widetilde{\rho},q}>0$ such that
\begin{equation}\label{e973}
\sup_{x>0}x^{\frac{k_2}{\log(q)}\log(\widetilde{\rho})}\exp\left(-\frac{k_2}{2\log(q)}\log^2(x)\right)\le K_{k_2,\widetilde{\rho},q}.
\end{equation}

In view of (\ref{e964}), (\ref{e968}), (\ref{e973}), and bearing in mind that the inequalities of Definition \ref{defi2} hold, we deduce there exist $\tilde{K}^{1}\in\R$, $\tilde{K}^{2}>0$ such that
$$ \exp\left(\frac{k_2\log^2|u|}{2\log(q)}\right)\exp\left(-\frac{k_2\log^2\left(\frac{|u|}{|\epsilon t|}\right)}{2\log(q)}\right)|u|^{\nu}\le \tilde{K}^2\exp\left(-\frac{k_2}{2\log(q)}\log^2|\epsilon|\right)|\epsilon|^{\tilde{K}^{1}},$$
for $t\in\mathcal{T}$, $r\ge \widetilde{\rho}$, and $\epsilon\in\mathcal{E}_{p}\cap\mathcal{E}_{p+1}$. Provided this last inequality, 
we arrive at
\begin{multline}
I_{1}\le\frac{\tilde{K}^{2}\tilde{C}_{w_{k_2}^{\mathfrak{d}_{p+1}}}}{(2\pi)^{1/2}}\frac{(\epsilon_0 r_{\mathcal{T}})^{1/2}}{\pi_{q^{1/k_2}}}\int_{\widetilde{\rho}}^{\infty}\frac{d|u|}{|u|^{3/2}}\exp\left(-\frac{k_2}{2\log(q)}\log^2|\epsilon|\right)|\epsilon|^{\tilde{K}^{1}} \\
=\tilde{K}^3\exp\left(-\frac{k_2}{2\log(q)}\log^2|\epsilon|\right)|\epsilon|^{\tilde{K}^{1}}\label{e983},
\end{multline}
for some $\tilde{K}^{3}>0$, for all $t\in\mathcal{T}$, $z\in H_{\beta'}$, and $\epsilon\in\mathcal{E}_{p}\cap\mathcal{E}_{p+1}$.

We can estimate in the same manner the expression
$$I_{2}:=\left|\frac{1}{(2\pi)^{1/2}}\frac{1}{\pi_{q^{1/k_2}}}\int_{-\infty}^{\infty}\int_{L_{\mathfrak{d}_{p},\widetilde{\rho}}}\frac{w^{\mathfrak{d}_{p}}_{k_2}(u,m,\epsilon)}{\Theta_{q^{1/k_2}}\left(\frac{u}{\epsilon t}\right)}\exp(izm)\frac{du}{u}dm\right|$$
to arrive at the existence of $\tilde{K}^{4}>0$ such that
\begin{equation}\label{e990}
I_{2}\le\tilde{K}^4\exp\left(-\frac{k_2}{2\log(q)}\log^2|\epsilon|\right)|\epsilon|^{\tilde{K}^{1}},
\end{equation}
for all $t\in\mathcal{T}$, $z\in H_{\beta'}$, and $\epsilon\in\mathcal{E}_{p}\cap\mathcal{E}_{p+1}$. We now provide upper bounds for the quantity
$$I_{3}:=\left|\frac{1}{(2\pi)^{1/2}}\frac{1}{\pi_{q^{1/k_2}}}\int_{-\infty}^{\infty}\int_{\mathcal{C}_{\widetilde{\rho},\mathfrak{d}_p,\mathfrak{d}_{p+1}}}\frac{w_{k_2}^{\mathfrak{d}_p,\mathfrak{d}_{p+1}}(u,m,\epsilon)}{\Theta_{q^{1/k_2}}\left(\frac{u}{\epsilon t}\right)}\exp(izm)\frac{du}{u}dm\right|.$$
From the construction of $w_{k_2}^{\mathfrak{d}_p,\mathfrak{d}_{p+1}}(\tau,m,\epsilon)$, we have
$$
|w_{k_2}^{\mathfrak{d}_p,\mathfrak{d}_{p+1}}(u,m,\epsilon)| \leq \tilde{C}_{w_{k_1}^{\mathfrak{d}_p}}\frac{1}{(1+|m|)^{\mu}}e^{-\beta|m|},
$$
for some $\tilde{C}_{w_{k_1}^{\mathfrak{d}_p}}>0$, valid for $u\in \mathcal{C}_{\widetilde{\rho},\mathfrak{d}_p,\mathfrak{d}_{p+1}}$, $m\in\mathbb{R}$ and $\epsilon\in D(0,\epsilon_0)$.

The estimates with (\ref{eq3}) allow us to obtain the existence of $\tilde{C}^{\mathfrak{d}_{p},\mathfrak{d}_{p+1}}_{w_{k_2}}>0$ such that
$$I_3\le \tilde{C}_{w_{k_2}}^{\mathfrak{d}_{p},\mathfrak{d}_{p+1}}\int_{-\infty}^{\infty}\frac{e^{-\beta|m|-m\Im(z)}}{(1+|m|)^{\mu}}dm|\mathfrak{d}_{p+1}-\mathfrak{d}_{p}||t|^{1/2}\exp\left(-\frac{k_2\log^2\left(\frac{\widetilde{\rho}}{|\epsilon t|}\right)}{2\log(q)}\right),$$
for all $t\in\mathcal{T}$, $z\in H_{\beta'}$, and $\epsilon\in\mathcal{E}_{p}\cap\mathcal{E}_{p+1}$. We can follow analogous arguments as in the previous steps to provide upper estimates of the expression
$$|t|^{1/2}\exp\left(-\frac{k_2\log^2\left(\frac{\widetilde{\rho}}{|\epsilon t|}\right)}{2\log(q)}\right).$$
Indeed, 
\begin{align*}
|t|^{1/2}\exp\left(-\frac{k_2\log^2\left(\frac{\widetilde{\rho}}{|\epsilon t|}\right)}{2\log(q)}\right)&=\exp\left(-\frac{k_2\log^2(\widetilde{\rho})}{2\log(q)}\right)|\epsilon|^{\frac{k_2\log(\widetilde{\rho})}{\log(q)}}|t|^{\frac{k_2\log(\widetilde{\rho})}{\log(q)}}\\
&\times \exp\left(\frac{k_2}{2\log(q)}(-\log^2|\epsilon|-2\log|\epsilon|\log|t|-\log^2|t|)\right)|t|^{1/2}.
\end{align*}

From the assumption $0\le \epsilon_0 <1$ we check that
$$\exp\left(-\frac{k_2}{\log(q)}\log|\epsilon|\log|t|\right)\le|\epsilon|^{-\frac{k_2}{\log(q)}\log(r_{\mathcal{T}})},$$
for $t\in\mathcal{T}$, $\epsilon\in\mathcal{E}_{p}\cap\mathcal{E}_{p+1}$. Gathering (\ref{e973}), we get the existence of $\tilde{K}^{5}\in\R$, $\tilde{K}^{6}>0$ such that
$$|t|^{1/2}\exp\left(-\frac{k_2\log^2\left(\frac{\widetilde{\rho}}{|\epsilon t|}\right)}{2\log(q)}\right)\le \tilde{K}^{6}\exp\left(-\frac{k_2}{2\log(q)}\log^2|\epsilon|\right)|\epsilon|^{\tilde{K}^{5}},$$
to conclude that 
\begin{equation}\label{e1009}
I_{3}\le \tilde{K}^{7}\exp\left(-\frac{k_2}{2\log(q)}\log^2|\epsilon|\right)|\epsilon|^{\tilde{K}^{5}},
\end{equation}
for some $\tilde{K}^{7}>0$, all $t\in\mathcal{T}$, $z\in H_{\beta'}$, and $\epsilon\in\mathcal{E}_{p}\cap\mathcal{E}_{p+1}$.
We conclude the proof of this result in view of (\ref{e983}), (\ref{e990}), (\ref{e1009}) and the decomposition (\ref{e943}).

\end{proof}
 \begin{lemma}\label{lema1031}
Let $0\le p \le \varsigma -1$. Under the hypotheses of Theorem~\ref{teo872}, assume that ${U_{\mathfrak{d}_{p}}\cap U_{\mathfrak{d}_{p+1}}=\emptyset}$. Then, there exist $K_p^{\mathcal{L}}>0$, $M_p^{\mathcal{L}}\in\R$ such that
\begin{multline*}
\left|\mathcal{L}_{q;1/\kappa}^{\mathfrak{d}_{p+1}}(w_{k_1}^{\mathfrak{d}_{p+1}})(\tau,m,\epsilon)-\mathcal{L}_{q;1/\kappa}^{\mathfrak{d}_{p}}(w_{k_1}^{\mathfrak{d}_{p}})(\tau,m,\epsilon)\right| \\
\le K_p^{\mathcal{L}}e^{-\beta|m|}(1+|m|)^{-\mu}\exp\left(-\frac{\kappa}{2\log(q)}\log^2|\tau|\right)|\tau|^{M_p^{\mathcal{L}}},
\end{multline*}
for every $\epsilon\in(\mathcal{E}_{p}\cap\mathcal{E}_{p+1})$, $\tau\in(\mathcal{R}^{b}_{\mathfrak{d}_{p}}\cap\mathcal{R}^{b}_{\mathfrak{d}_{p+1}})$ and $m\in\R$.
\end{lemma}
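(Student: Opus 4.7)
The plan is to adapt the contour-deformation argument of Proposition~\ref{prop925} to the situation where the sectors $U_{\mathfrak{d}_p}$ and $U_{\mathfrak{d}_{p+1}}$ are disjoint. The key observation, recorded at the end of the proof of Theorem~\ref{teo872}, is that $w_{k_1}^{\mathfrak{d}_p}$ and $w_{k_1}^{\mathfrak{d}_{p+1}}$ coincide on $\overline{D}(0,\rho)$ with a common function $w_{k_1}(u,m,\epsilon)$ holomorphic in $D(0,\rho)$. Although we can no longer deform one ray directly onto the other, this common disc provides a bridge near the origin.

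Fix $0<\rho'<\rho$ and split each Laplace integral at $r=\rho'$. The difference of the two near segments of length $\rho'$, which integrate the holomorphic integrand $u\mapsto w_{k_1}(u,m,\epsilon)/(u\,\Theta_{q^{1/\kappa}}(u/\tau))$ along $[0,\rho']e^{i\mathfrak{d}_p}$ and $[0,\rho']e^{i\mathfrak{d}_{p+1}}$, is by Cauchy's theorem equal to the integral along the arc $\mathcal{C}_{\rho',\mathfrak{d}_p,\mathfrak{d}_{p+1}}$ of radius $\rho'$ joining $\rho'e^{i\mathfrak{d}_p}$ to $\rho'e^{i\mathfrak{d}_{p+1}}$. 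This produces an analogue of the decomposition (\ref{e943}):
$$\mathcal{L}_{q;1/\kappa}^{\mathfrak{d}_{p+1}}(w_{k_1}^{\mathfrak{d}_{p+1}})(\tau,m,\epsilon)-\mathcal{L}_{q;1/\kappa}^{\mathfrak{d}_p}(w_{k_1}^{\mathfrak{d}_p})(\tau,m,\epsilon)=J_1-J_2+J_3,$$
where $J_1,J_2$ are the tail integrals over $[\rho',\infty)e^{i\mathfrak{d}_{p+1}}$ (resp.\ $[\rho',\infty)e^{i\mathfrak{d}_p}$) against $w_{k_1}^{\mathfrak{d}_{p+1}}$ (resp.\ $w_{k_1}^{\mathfrak{d}_p}$), and $J_3$ is the arc integral against $w_{k_1}$.

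Each of $J_1,J_2,J_3$ is bounded by combining the growth estimate (\ref{e869}) for $w_{k_1}^{\mathfrak{d}_j}$ with the lower bound (\ref{eq3}) for $\Theta_{q^{1/\kappa}}$. The algebraic cancellation at the heart of the computation is
$$\frac{\kappa}{2\log q}\bigl(\log^2(r+\delta)-\log^2(r/|\tau|)\bigr)=\frac{\kappa}{\log q}\log(r)\log|\tau|-\frac{\kappa}{2\log q}\log^2|\tau|+O(1),$$
which after exponentiation extracts the desired factor $\exp(-\kappa\log^2|\tau|/(2\log q))$ together with a power $r^{\kappa\log|\tau|/\log q}$ in the integrand. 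The tails $J_1,J_2$ then converge thanks to the assumption $\epsilon_0 r_{\mathcal{T}}\le q^{(1/2-\alpha)/\kappa}/2$ of Definition~\ref{defi2}, which forces the overall exponent of $r$ to remain strictly below $-1$; explicit integration yields a bound of the form $C\,e^{-\beta|m|}(1+|m|)^{-\mu}\exp(-\kappa\log^2|\tau|/(2\log q))|\tau|^{M_{1,2}}$ for some $M_{1,2}\in\R$. The arc contribution $J_3$ is treated identically, using boundedness of $w_{k_1}$ on $\overline{D}(0,\rho)$ and the same theta lower bound at $|u|=\rho'$, producing an analogous estimate with exponent $M_3\in\R$. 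Letting $M_p^{\mathcal{L}}$ dominate $M_1,M_2,M_3$ on the bounded set $\mathcal{R}^{b}_{\mathfrak{d}_p}\cap\mathcal{R}^{b}_{\mathfrak{d}_{p+1}}$ gives the result.

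The main technical obstacle will be to secure the positivity condition $|1+(u/\tau)q^{m/\kappa}|>\tilde\delta'$ required by (\ref{eq3}) uniformly on all three pieces of the contour. Along the two rays this is inherited directly from $\tau\in\mathcal{R}_{\mathfrak{d}_p,\tilde\delta}\cap\mathcal{R}_{\mathfrak{d}_{p+1},\tilde\delta}$, but along the arc $\mathcal{C}_{\rho',\mathfrak{d}_p,\mathfrak{d}_{p+1}}$ one must choose $\rho'$ small enough, depending on $\tilde\delta$, so that the values $\rho'e^{i\theta}/\tau$ stay at uniformly positive distance from the zero sequence $\{-q^{-m/\kappa}\}_{m\in\Z}$ of $\Theta_{q^{1/\kappa}}$ when $|\tau|$ ranges over the bounded domain $\mathcal{R}^{b}_{\mathfrak{d}_p}\cap\mathcal{R}^{b}_{\mathfrak{d}_{p+1}}$.
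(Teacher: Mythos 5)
Your proposal is correct and follows essentially the same route as the paper: the same three-piece contour decomposition (two tails from radius $\widetilde{\rho}$ plus the arc over the common function $w_{k_1}$ on the disc), the same combination of the bound (\ref{e869}) with the theta lower bound (\ref{eq3}), and the same completion-of-the-square algebra extracting $\exp(-\kappa\log^2|\tau|/(2\log q))$. The only slip is cosmetic: the tail convergence rests on the condition $\alpha+\frac{\kappa}{\log(q)}\log(\epsilon_0 r_{\mathcal{T}})<0$ of Definition~\ref{defi2} rather than the inequality you quoted, but this does not affect the argument.
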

\begin{proof}
We first recall that, without loss of generality, the intersection $\mathcal{R}_{\mathfrak{d}_{p}}^b\cap \mathcal{R}_{\mathfrak{d}_{p+1}}^b$ can be assumed to be a nonempty set because one  can vary $\tilde{\delta}$ in advance to be as close to 0 as desired.

Analogous arguments as in the beginning of the proof of Proposition~\ref{prop925} allow us to write
\begin{multline}\label{e943b}
\mathcal{L}_{q;1/\kappa}^{\mathfrak{d}_{p+1}}(w_{k_1}^{\mathfrak{d}_{p+1}})(\tau,m,\epsilon)-\mathcal{L}_{q;1/\kappa}^{\mathfrak{d}_{p}}(w_{k_1}^{\mathfrak{d}_{p}})(\tau,m,\epsilon)\\
=\frac{1}{\pi_{q^{1/\kappa}}}\int_{L_{\mathfrak{d}_{p+1},\widetilde{\rho}}}\frac{w^{\mathfrak{d}_{p+1}}_{k_1}(u,m,\epsilon)}{\Theta_{q^{1/\kappa}}\left(\frac{u}{\tau}\right)}\frac{du}{u}\\ 
-\frac{1}{\pi_{q^{1/\kappa}}}\int_{L_{\mathfrak{d}_p,\widetilde{\rho}}}\frac{w^{\mathfrak{d}_{p}}_{k_1}(u,m,\epsilon)}{\Theta_{q^{1/\kappa}}\left(\frac{u}{\tau}\right)}\frac{du}{u}\\
+\frac{1}{\pi_{q^{1/\kappa}}}\int_{\mathcal{C}_{\widetilde{\rho},\mathfrak{d}_p,\mathfrak{d}_{p+1}}}\frac{w_{k_1}(u,m,\epsilon)}{\Theta_{q^{1/\kappa}}\left(\frac{u}{\tau}\right)}\frac{du}{u},
\end{multline}
where $\widetilde{\rho}$, $L_{\mathfrak{d}_{p},\widetilde{\rho}}$, $L_{\mathfrak{d}_{p+1},\widetilde{\rho}}$ and $\mathcal{C}_{\widetilde{\rho},\mathfrak{d}_p,\mathfrak{d}_{p+1}}$ are constructed in Proposition~\ref{prop925}.

In view of (\ref{e869}) and (\ref{eq3}), one has

\begin{multline*}
I^{\mathcal{L}}_1:=\left|\frac{1}{\pi_{q^{1/\kappa}}}\int_{L_{\mathfrak{d}_{p},\widetilde{\rho}}}\frac{w^{\mathfrak{d}_{p}}_{k_1}(u,m,\epsilon)}{\Theta_{q^{1/\kappa}}\left(\frac{u}{\tau}\right)}\frac{du}{u}
\right|\\
\le \frac{C_{w^{\mathfrak{d}_{p}}_{k_1}}}{C_{q,\kappa}\tilde{\delta}}\frac{|\tau|^{1/2}}{(1+|m|)^{\mu}}e^{-\beta|m|}\int_{\widetilde{\rho}}^{\infty}\frac{\exp\left(\frac{\kappa\log^2|re^{i\mathfrak{d}_{p}}+\delta|}{2\log(q)}+\alpha\log|re^{i\mathfrak{d}_{p}}+\delta|\right)}{\exp\left(\frac{\kappa}{2}\frac{\log^2\left(\frac{r}{|\tau|}\right)}{\log(q)}\right)}\frac{dr}{r^{3/2}}\\
\le K_{p,1}^{\mathcal{L}}|\tau|^{1/2}(1+|m|)^{-\mu}e^{-\beta|m|}\int_{\widetilde{\rho}}^{\infty}\frac{\exp\left(\frac{\kappa\log^2r}{2\log(q)}+\alpha\log r\right)}{\exp\left(\frac{\kappa}{2}\frac{\log^2\left(\frac{r}{|\tau|}\right)}{\log(q)}\right)}\frac{dr}{r^{3/2}}
\end{multline*}
for some $K_{p,1}^{\mathcal{L}}>0$. Usual calculations, and taking into account the choice of $\alpha$ in Definition \ref{defi2}, one derives the previous expression equals
$$K_{p,1}^{\mathcal{L}}|\tau|^{1/2}(1+|m|)^{-\mu}e^{-\beta|m|}\exp\left(-\frac{\kappa}{2\log(q)}\log^2|\tau|\right)\int_{\widetilde{\rho}}^{\infty}r^{\frac{\kappa\log|\tau|}{\log(q)}+\alpha-3/2}dr.$$
Besides, we observe from direct computations that
$$\int_{\tilde{\rho}}^{\infty}r^{\frac{\kappa \log|\tau|}{\log(q)}+\alpha-\frac{3}{2}}dr$$
is upper bounded by a constant times $|\tau|^{\Delta}$, for every $\Delta<\frac{\kappa}{\log(q)}\log(\tilde{\rho})$. This yields
\begin{equation}\label{e1058}
I^{\mathcal{L}}_1\le K_{p,2}^{\mathcal{L}}(1+|m|)^{-\mu}e^{-\beta|m|}\exp\left(-\frac{\kappa}{2\log(q)}\log^2|\tau|\right)|\tau|^{\Delta+\frac{1}{2}},
\end{equation}
for some $K_{p,2}^{\mathcal{L}}>0$.
Analogous arguments allow us to obtain the existence of $K_{p,3}^{\mathcal{L}}>0$
such that
\begin{multline}\label{e1064}
\quad\quad\quad\quad I^{\mathcal{L}}_{2}:=\left|\frac{1}{\pi_{q^{1/\kappa}}}\int_{L_{\mathfrak{d}_{p+1},\widetilde{\rho}}}\frac{w^{\mathfrak{d}_{p+1}}_{k_1}(u,m,\epsilon)}{\Theta_{q^{1/\kappa}}\left(\frac{u}{\tau}\right)}\frac{du}{u}
\right|\\
\le K_{p,3}^{\mathcal{L}}(1+|m|)^{-\mu}e^{-\beta|m|}\exp\left(-\frac{\kappa}{2\log(q)}\log^2|\tau|\right)|\tau|^{\Delta+\frac{1}{2}},
\end{multline}
for $\Delta$ as above. 

We write 
$$I^{\mathcal{L}}_{3}:=\left|\frac{1}{\pi_{q^{1/\kappa}}}\int_{\mathcal{C}_{\widetilde{\rho},\mathfrak{d}_p,\mathfrak{d}_{p+1}}}\frac{w_{k_1}(u,m,\epsilon)}{\Theta_{q^{1/\kappa}}\left(\frac{u}{\tau}\right)}\frac{du}{u}\right|.$$
Regarding  (\ref{e869}) and (\ref{eq3}), one derives that
\begin{align*}
I^{\mathcal{L}}_{3}&\le\frac{C_{w^{\mathfrak{d}_{p}}_{k_1}}}{\pi_{q^{1/\kappa}}}\frac{e^{-\beta|m|}}{(1+|m|)^{\mu}}\frac{|\tau|^{1/2}}{\widetilde{\rho}^{1/2}C_{q,\kappa}\tilde{\delta}}\int_{\mathfrak{d}_p}^{\mathfrak{d}_{p+1}}\frac{\exp\left(\frac{\kappa\log^2|\widetilde{\rho}e^{i\theta}+\delta|}{2\log(q)}+\alpha\log|\widetilde{\rho}e^{i\theta}+\delta|\right)}{\exp\left(\frac{\kappa}{2}\frac{\log^2\left(\frac{\widetilde{\rho}}{|\tau|}\right)}{\log(q)}\right)}d\theta\\
&\le K_{p,4}^{\mathcal{L}}|\tau|^{1/2}\frac{e^{-\beta|m|}}{(1+|m|)^{\mu}}\exp\left(-\frac{\kappa}{2}\frac{\log^2\left(\frac{\widetilde{\rho}}{|\tau|}\right)}{\log(q)}\right)
\end{align*}
with 
$$K_{p,4}^{\mathcal{L}}=|\mathfrak{d}_{p+1}-\mathfrak{d}_{p}|\frac{C_{w^{\mathfrak{d}_{p}}_{k_1}}}{\pi_{q^{1/\kappa}}}\frac{1}{\widetilde{\rho}^{1/2}C_{q,\kappa}\tilde{\delta}}\exp\left(\frac{\kappa\log^2(\widetilde{\rho}+\delta)}{2\log(q)}+\alpha\log(\widetilde{\rho}+\delta)\right).$$
Let $K_{p,5}^{\mathcal{L}}=K_{p,4}^{\mathcal{L}}\exp(-\frac{\kappa}{2\log(q)}\log^2(\widetilde{\rho}))$. It is straightforward to check that
\begin{equation}\label{e1076}
I^{\mathcal{L}}_{3}\le K_{p,5}^{\mathcal{L}}|\tau|^{1/2+\frac{\kappa\log(\widetilde{\rho})}{\log(q)}}\frac{e^{-\beta|m|}}{(1+|m|)^{\mu}}\exp\left(-\frac{\kappa}{2}\frac{\log^2|\tau|}{\log(q)}\right).
\end{equation}

From (\ref{e1058}), (\ref{e1064}) and (\ref{e1076}), put into (\ref{e943b}), we conclude the result.
\end{proof}

\begin{prop}\label{prop1047}
Let $0\le p \le \varsigma -1$. Under the hypotheses of Theorem~\ref{teo872}, assume that $U_{\mathfrak{d}_{p}}\cap U_{\mathfrak{d}_{p+1}}=\emptyset$. Then, there exist $K_3>0$ and $K_4\in\R$ such that
\begin{equation}\label{e1049aa}
|u^{\mathfrak{d}_{p+1}}(t,z,\epsilon)-u^{\mathfrak{d}_{p}}(t,z,\epsilon)|\le K_{3}\exp\left(-\frac{k_1}{2\log(q)}\log^2|\epsilon|\right)|\epsilon|^{K_4},
\end{equation}
for every $t\in\mathcal{T}$, $z\in H_{\beta'}$, and $\epsilon\in\mathcal{E}_{p}\cap\mathcal{E}_{p+1}$. 
\end{prop}
\begin{proof}
Let $0\le p\le \varsigma-1$. Under the assumptions of the statement, we observe that one can not proceed as in the proof of Proposition~\ref{prop925} for there does not exist a common function for both indices $p$ and $p+1$, defined in $\mathcal{R}^{b}_{\mathfrak{d}_{p}}\cup \mathcal{R}^{b}_{\mathfrak{d}_{p+1}}$ in the variable of integration, when applying $q$-Laplace transform. However, one can use the analytic continuation property and write the difference $u^{\mathfrak{d}_{p+1}}-u^{\mathfrak{d}_{p}}$ as follows. Let $\widetilde{\rho}>0$ be such that $\widetilde{\rho}e^{i\mathfrak{d}_{p}}\in\mathcal{R}^{b}_{\mathfrak{d}_{p}}$ and $\widetilde{\rho}e^{i\mathfrak{d}_{p+1}}\in\mathcal{R}^{b}_{\mathfrak{d}_{p+1}}$, and let $\theta_{p,p+1}\in\R$ be such that $\widetilde{\rho}e^{\theta_{p,p+1}}$ lies in both $\mathcal{R}^{b}_{\mathfrak{d}_{p}}$ and  $\mathcal{R}^{b}_{\mathfrak{d}_{p+1}}$. We write $u^{\mathfrak{d}_{p+1}}(t,z,\epsilon)-u^{\mathfrak{d}_{p}}(t,z,\epsilon)$ as follows

\begin{multline}
u^{\mathfrak{d}_{p+1}}(t,z,\epsilon)-u^{\mathfrak{d}_{p}}(t,z,\epsilon)\\
=\frac{1}{(2\pi)^{1/2}}\frac{1}{\pi_{q^{1/k_2}}}\int_{-\infty}^{\infty}\int_{L_{\mathfrak{d}_{p+1},\widetilde{\rho}}}\frac{w^{\mathfrak{d}_{p+1}}_{k_2}(u,m,\epsilon)}{\Theta_{q^{1/k_2}}\left(\frac{u}{\epsilon t}\right)}\exp(izm)\frac{du}{u}dm\hspace{3cm}\\ 
\hspace{2cm}-\frac{1}{(2\pi)^{1/2}}\frac{1}{\pi_{q^{1/k_2}}}\int_{-\infty}^{\infty}\int_{L_{\mathfrak{d}_p,\widetilde{\rho}}}\frac{w^{\mathfrak{d}_{p}}_{k_2}(u,m,\epsilon)}{\Theta_{q^{1/k_2}}\left(\frac{u}{\epsilon t}\right)}\exp(izm)\frac{du}{u}dm\\
\hspace{4cm}-\frac{1}{(2\pi)^{1/2}}\frac{1}{\pi_{q^{1/k_2}}}\int_{-\infty}^{\infty}\int_{\mathcal{C}_{\widetilde{\rho},\theta_{p,p+1},\mathfrak{d}_{p+1}}}\frac{w^{\mathfrak{d}_p,\mathfrak{d}_{p+1}}_{k_2}(u,m,\epsilon)}{\Theta_{q^{1/k_2}}\left(\frac{u}{\epsilon t}\right)}\exp(izm)\frac{du}{u}dm\label{e1049}\\
\hspace{4cm}+\frac{1}{(2\pi)^{1/2}}\frac{1}{\pi_{q^{1/k_2}}}\int_{-\infty}^{\infty}\int_{\mathcal{C}_{\widetilde{\rho},\theta_{p,p+1},\mathfrak{d}_{p}}}\frac{w^{\mathfrak{d}_p,\mathfrak{d}_{p+1}}_{k_2}(u,m,\epsilon)}{\Theta_{q^{1/k_2}}\left(\frac{u}{\epsilon t}\right)}\exp(izm)\frac{du}{u}dm\\
+\frac{1}{(2\pi)^{1/2}}\frac{1}{\pi_{q^{1/k_2}}}\int_{-\infty}^{\infty}\int_{L_{0,\widetilde{\rho},\theta_{p,p+1}}}\frac{\mathcal{L}_{q;1/\kappa}^{\mathfrak{d}_{p+1}}(w_{k_1}^{\mathfrak{d}_{p+1}})(\tau,m,\epsilon)-\mathcal{L}_{q;1/\kappa}^{\mathfrak{d}_{p}}(w_{k_1}^{\mathfrak{d}_{p}})(\tau,m,\epsilon)}{\Theta_{q^{1/k_2}}\left(\frac{u}{\epsilon t}\right)}\exp(izm)\frac{du}{u}dm.
\end{multline}
Here, we have denoted $L_{\mathfrak{d}_{j},\widetilde{\rho}}=[\widetilde{\rho},+\infty)e^{i\mathfrak{d}_{j}}$ for $j\in\{p,p+1\}$, $\mathcal{C}_{\widetilde{\rho},\theta_{p,p+1},\mathfrak{d}_{p+1}}$ is the arc of circle connecting $\widetilde{\rho}e^{i\mathfrak{d}_{p+1}}$ with $\widetilde{\rho}e^{i\theta_{p,p+1}}$, $\mathcal{C}_{\widetilde{\rho},\theta_{p,p+1},\mathfrak{d}_{p}}$ is the arc of circle connecting $\widetilde{\rho}e^{i\mathfrak{d}_{p}}$ with $\widetilde{\rho}e^{i\theta_{p,p+1}}$, $L_{0,\widetilde{\rho},\theta_{p,p+1}}=[0,\widetilde{\rho}]e^{i\theta_{p,p+1}}$, as it is shown in the following figure.

\begin{figure}[h]
	\centering
		\includegraphics[width=0.50\textwidth]{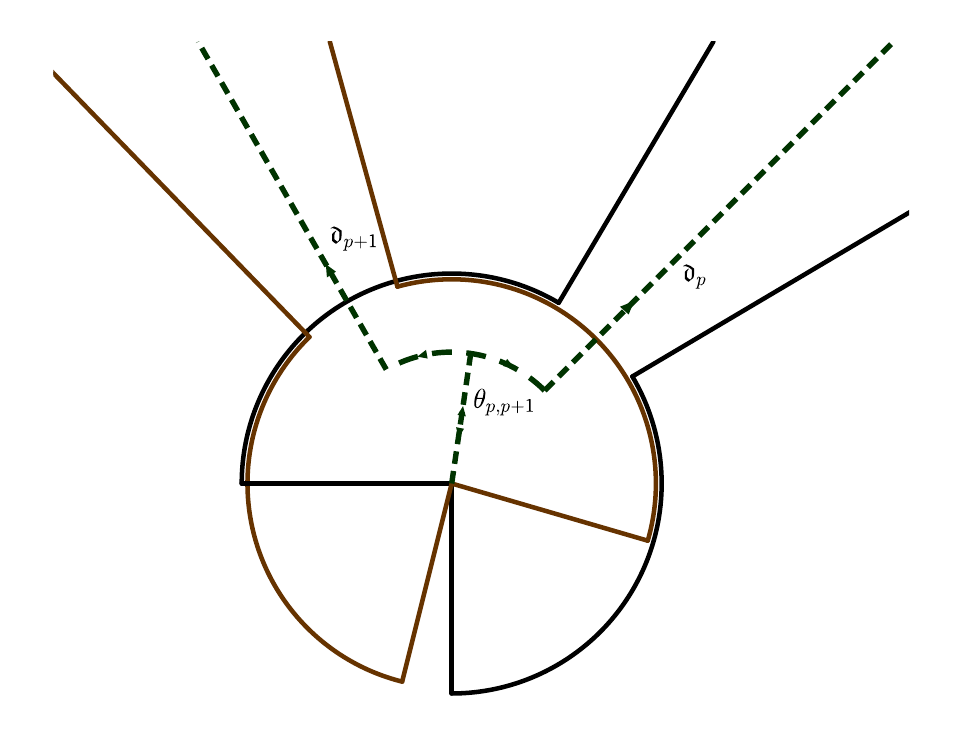}
	\label{fig3}\label{fig2}
	\caption{Deformation of the path of integration, second case.}
\end{figure}

Following the same line of arguments as those in the proof of Proposition~\ref{prop925}, we can guarantee the existence of $\hat{K}^{j}>0$ and $\hat{K}^{k}\in\R$ for $1\le j\le 4$ and $5\le k\le 8$ such that

\begin{multline*}
J_{1}:=\left|\frac{1}{(2\pi)^{1/2}}\frac{1}{\pi_{q^{1/k_2}}}\int_{-\infty}^{\infty}\int_{L_{\mathfrak{d}_{p+1},\widetilde{\rho}}}\frac{w^{\mathfrak{d}_{p+1}}_{k_2}(u,m,\epsilon)}{\Theta_{q^{1/k_2}}\left(\frac{u}{\epsilon t}\right)}\exp(izm)\frac{du}{u}dm\right|\\
\le \hat{K}_{1}\exp\left(-\frac{k_2}{2\log(q)}\log^2|\epsilon|\right)|\epsilon|^{\hat{K}^{5}},
\end{multline*}

\begin{multline*}
J_{2}:=\left|\frac{1}{(2\pi)^{1/2}}\frac{1}{\pi_{q^{1/k_2}}}\int_{-\infty}^{\infty}\int_{L_{\mathfrak{d}_p,\widetilde{\rho}}}\frac{w^{\mathfrak{d}_{p}}_{k_2}(u,m,\epsilon)}{\Theta_{q^{1/k_2}}\left(\frac{u}{\epsilon t}\right)}\exp(izm)\frac{du}{u}dm\right|\\
\le \hat{K}_{2}\exp\left(-\frac{k_2}{2\log(q)}\log^2|\epsilon|\right)|\epsilon|^{\hat{K}^{6}},
\end{multline*}

\begin{multline*}
J_{3}:=\left|\frac{1}{(2\pi)^{1/2}}\frac{1}{\pi_{q^{1/k_2}}}\int_{-\infty}^{\infty}\int_{\mathcal{C}_{\widetilde{\rho},\theta_{p,p+1},\mathfrak{d}_{p+1}}}\frac{w^{\mathfrak{d}_p,\mathfrak{d}_{p+1}}_{k_2}(u,m,\epsilon)}{\Theta_{q^{1/k_2}}\left(\frac{u}{\epsilon t}\right)}\exp(izm)\frac{du}{u}dm\right|\\
\le \hat{K}_{3}\exp\left(-\frac{k_2}{2\log(q)}\log^2|\epsilon|\right)|\epsilon|^{\hat{K}^{7}},
\end{multline*}

\begin{multline*}
J_{4}:=\left|\frac{1}{(2\pi)^{1/2}}\frac{1}{\pi_{q^{1/k_2}}}\int_{-\infty}^{\infty}\int_{\mathcal{C}_{\widetilde{\rho},\theta_{p,p+1},\mathfrak{d}_{p}}}\frac{w^{\mathfrak{d}_p,\mathfrak{d}_{p+1}}_{k_2}(u,m,\epsilon)}{\Theta_{q^{1/k_2}}\left(\frac{u}{\epsilon t}\right)}\exp(izm)\frac{du}{u}dm\right|\\
\le \hat{K}_{4}\exp\left(-\frac{k_2}{2\log(q)}\log^2|\epsilon|\right)|\epsilon|^{\hat{K}^{8}}
\end{multline*}

We now give estimates for 
\begin{multline*}
J_{5}:=\frac{1}{(2\pi)^{1/2}}\frac{1}{\pi_{q^{1/k_2}}} \\
\times \left| \int_{-\infty}^{\infty}\int_{L_{0,\widetilde{\rho},\theta_{p,p+1}}}\frac{\mathcal{L}_{q;1/\kappa}^{\mathfrak{d}_{p+1}}(w_{k_1}^{\mathfrak{d}_{p+1}})(u,m,\epsilon)-\mathcal{L}_{q;1/\kappa}^{\mathfrak{d}_{p}}(w_{k_1}^{\mathfrak{d}_{p}})(u,m,\epsilon)}{\Theta_{q^{1/k_2}}\left(\frac{u}{\epsilon t}\right)}\exp(izm)\frac{du}{u}dm\right|.
\end{multline*}
In view of Lemma~\ref{lema1031} and (\ref{eq3}), one has 
$$J_{5}\le\frac{K_p^{\mathcal{L}}}{(2\pi)^{1/2}}\frac{1}{\pi_{q^{1/k_2}}}\int_{-\infty}^{\infty}e^{-\beta|m|-\Im(z)m}\frac{dm}{(1+|m|)^{\mu}}\int_{0}^{\widetilde{\rho}}\frac{\exp\left(-\frac{\kappa}{2\log(q)}\log^2|u|\right)|u|^{M_p^{\mathcal{L}}}}{C_{q,k_2}\tilde{\delta}\exp\left(\frac{k_2}{2}\frac{\log^2\left|\frac{u}{\epsilon t}\right|}{\log(q)}\right)\left|\frac{u}{\epsilon t}\right|^{1/2}}\frac{d|u|}{|u|}.$$
We recall that $z\in H_{\beta'}$ for some $\beta'<\beta$. Then, there exists $K_{31}>0$ such that
$$J_{5}\le\frac{K_p^{\mathcal{L}}K_{31}}{(2\pi)^{1/2}}\frac{|\epsilon|^{1/2}r_{\mathcal{T}}^{1/2}}{\pi_{q^{1/k_2}}C_{q,k_2}\tilde{\delta}}\int_{0}^{\widetilde{\rho}}\frac{\exp\left(-\frac{\kappa}{2\log(q)}\log^2|u|\right)|u|^{M_p^{\mathcal{L}}}}{\exp\left(\frac{k_2}{2}\frac{\log^2\left|\frac{u}{\epsilon t}\right|}{\log(q)}\right)}\frac{d|u|}{|u|^{3/2}}.$$

We now proceed to prove the expression
$$\int_{0}^{\widetilde{\rho}}\frac{\exp\left(-\frac{\kappa}{2\log(q)}\log^2|u|\right)}{\exp\left(\frac{k_2}{2}\frac{\log^2\left|\frac{u}{\epsilon t}\right|}{\log(q)}\right)}\exp\left(\frac{k_1}{2\log(q)}\log^2|\epsilon|\right)\frac{d|u|}{|u|^{3/2-M_p^{\mathcal{L}}}}$$
is upper bounded by a positive constant times a certain power of $|\epsilon|$ for every $\epsilon\in(\mathcal{E}_{p}\cap \mathcal{E}_{p+1})$ and $t\in\mathcal{T}$. This concludes the existence of $K_{32}>0$ such that
\begin{equation}\label{e1109}J_{5}\le K_{32}|\epsilon|^{1/2}\exp\left(-\frac{k_1}{2\log(q)}\log^2|\epsilon|\right),
\end{equation}
for every $\epsilon\in(\mathcal{E}_{p}\cap \mathcal{E}_{p+1})$, $t\in\mathcal{T}$ and $z\in H_{\beta'}$.

Indeed, we have 
$$\int_{0}^{\widetilde{\rho}}\frac{\exp\left(-\frac{\kappa}{2\log(q)}\log^2|u|\right)}{\exp\left(\frac{k_2}{2}\frac{\log^2\left(\frac{|u|}{|\epsilon t|}\right)}{\log(q)}\right)}\exp\left(\frac{k_1}{2\log(q)}\log^2|\epsilon |\right)\frac{d|u|}{|u|^{3/2-M_p^{\mathcal{L}}}}$$
equals
\begin{multline}\label{e1111}
\exp\left(\frac{k_1}{2\log(q)}\log^2|\epsilon|-\frac{k_2}{2\log(q)}\log^2|\epsilon t|\right)\\
\times \int_{0}^{\widetilde{\rho}}\exp\left(-\frac{(\kappa+k_2)}{2\log(q)}\log^2|u|\right)|u|^{\frac{k_2\log|\epsilon t|}{\log(q)}-\frac{3}{2}+M_p^{\mathcal{L}}}d|u|.
\end{multline}
Given $m_1\in\R$ and $m_2>0$, the function $[0,\infty)\ni x\mapsto H(x)=x^{m_1}\exp(-m_2\log^2(x))$ attains its maximum value at $x_0=\exp(\frac{m_1}{2m_2})$ with $H(x_0)=\exp(\frac{m_1^2}{4m_2})$. This yields an upper bound for the integrand in (\ref{e1111}); the expression in (\ref{e1111}) is estimated from above by
\begin{multline}
\widetilde{\rho}\exp\left(\frac{(M_{p}^{\mathcal{L}}-3/2)^2\log(q)}{2(\kappa+k_2)}\right) \exp\left(\frac{1}{2\log(q)}(\frac{k_2^2}{\kappa+k_2}-k_2+k_1)\log^2|\epsilon|\right)\\
\times\exp\left(\frac{1}{2\log(q)}(\frac{k_2^2}{\kappa+k_2}-k_2)\log^2|t|\right)|t|^{\frac{k_2(M_{p}^{\mathcal{L}}-3/2)}{\kappa+k_2}}\\
\times \exp\left(\frac{1}{\log(q)}(\frac{k_2^2}{\kappa+k_2}-k_2)\log|\epsilon|\log|t|\right)|\epsilon|^{\frac{k_2(M_{p}^{\mathcal{L}}-3/2)}{\kappa+k_2}}.\label{e1156}
\end{multline}
The second line in (\ref{e1156}) is upper bounded for every $t$ because $\frac{k_2^2}{\kappa+k_2}<k_2$ and also, one has an upper bound for $\exp\left(\frac{1}{\log(q)}(\frac{k_2^2}{\kappa+k_2}-k_2)\log|\epsilon|\log|t|\right)$ is 1. Regarding Definition \ref{defi2}, and taking into account that
$$\frac{k_2^2}{\kappa+k_2}-k_2=-k_1,$$
the expression (\ref{e1156}) is upper bounded by
$$K_{33}|\epsilon|^{\frac{k_2(M_{p}^{\mathcal{L}}-3/2)}{\kappa+k_2}}$$
for some $K_{33}>0$. The conclusion is achieved. The result follows from (\ref{e1049}), the estimates $J_{1}$ to $J_{4}$, and (\ref{e1109}). 
\end{proof}

\section{Existence of formal series solutions in the complex parameter and asymptotic expansion in two levels}\label{seccion6}

In the first part of this section, we remind two $q$-analogs of Ramis-Sibuya theorem from \cite{lamaq,ma}. This result provides the tool to guarantee the existence of a formal power series in the perturbation parameter which formally solves the main problem and such that it asymptotically represents the analytic solution of that equation.

This asymptotic representation is held in the sense of $q$-asymptotic expansions of certain positive order.

\begin{defin}
Let $V$ be a bounded open sector with vertex at 0 in $\C$. Let $(\mathbb{F},\left\|\cdot\right\|_{\mathbb{F}})$ be a complex Banach space. Let $q\in\R$ with $q>1$ and let $k$ be a positive integer. We say that a holomorphic function $f:V\to\mathbb{F}$ admits the formal power series $\hat{f}(\epsilon)=\sum_{n\ge0}f_n\epsilon^n\in\mathbb{F}[[\epsilon]]$ as its $q$-Gevrey asymptotic expansion of order $1/k$ if for every open subsector $U$ with $(\overline{U}\setminus\{0\})\subseteq V$, there exist $A,C>0$ such that
$$\left\|f(\epsilon)-\sum_{n=0}^{N}f_n\epsilon^n\right\|_{\mathbb{F}}\le CA^{N+1}q^{\frac{N(N+1)}{2k}}|\epsilon|^{N+1},$$
for every $\epsilon\in U$, and $N\ge0$.
\end{defin}

The set of functions which admit null $q$-Gevrey asymptotic expansion of certain positive order are characterized as follows. The proof of this result, already stated in~\cite{ma}, provides the $q$-analog of Theorem XI-3-2 in~\cite{hssi}.

\begin{lemma}
A holomorphic function $f:V\to\mathbb{F}$ admits the null formal power series $\hat{0}\in\mathbb{F}[[\epsilon]]$ as its $q$-Gevrey asymptotic expansion of order $1/k$ if and only if for every open subsector $U$ with $(\overline{U}\setminus\{0\})\subseteq V$ there exist constants $K_1\in\R$ and $K_2>0$ with
$$\left\|f(\epsilon)\right\|_{\mathbb{F}}\le K_2\exp\left(-\frac{k}{2\log(q)}\log^2|\epsilon|\right)|\epsilon|^{K_1},$$
for all $\epsilon\in U$.
\end{lemma}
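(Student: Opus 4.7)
The plan is to prove both implications by elementary quadratic optimization, swapping the roles of the discrete index $N$ and the continuous variable $x=\log|\epsilon|$ between the two directions. The machinery is exactly the $q$-analog of the classical Watson-type characterization of null Gevrey asymptotic expansions, and no integral representation or Borel-Laplace machinery is needed.

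For the necessity direction, I would start from the defining estimate
$$\|f(\epsilon)\|_{\mathbb{F}}\le CA^{N+1}q^{N(N+1)/(2k)}|\epsilon|^{N+1},\qquad N\ge0,\ \epsilon\in U,$$
and view its logarithm as a quadratic function of a continuous variable $N$, namely
$$\Phi(N):=\log C+(N+1)\log A+\tfrac{N(N+1)}{2k}\log q+(N+1)x.$$
With $x=\log|\epsilon|$ fixed and sufficiently negative, $\Phi$ is minimized at $N_{\star}=-\tfrac{1}{2}-\tfrac{k(\log A+x)}{\log q}>0$. A short direct computation yields $\Phi(N_{\star})=-\tfrac{k}{2\log q}x^{2}+K_{1}x+\mathrm{const}$ with the explicit value $K_{1}=\tfrac{1}{2}-\tfrac{k\log A}{\log q}$, which is precisely the desired bound. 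Rounding $N_{\star}$ to the nearest nonnegative integer introduces only an $O(1)$ error in $\Phi$, absorbed into the constant. For those $\epsilon$ near the outer boundary of $U$ where $N_{\star}\le 0$, I would take $N=0$ and use continuity of $f$ on compact subsets of $\overline{U}\setminus\{0\}$.

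For the sufficiency direction, fix $N\ge 0$ and a subsector $U'$ with $(\overline{U'}\setminus\{0\})\subset V$, of outer radius $R_{U'}$. After dividing and taking logarithms, the desired estimate reduces to a uniform bound, in $N\in\mathbb{N}$ and in $x\le\log R_{U'}$, of the form
$$\Psi_{N}(x):=-\tfrac{k}{2\log q}x^{2}+(K_{1}-N-1)x\;\le\; \tfrac{N(N+1)}{2k}\log q+(N+1)\log A+\mathrm{const}.$$
The quadratic $\Psi_{N}$ attains its global maximum at $x_{\star}=\tfrac{(K_{1}-N-1)\log q}{k}$ with value $\tfrac{(K_{1}-N-1)^{2}\log q}{2k}$, and the algebraic identity
$$(K_{1}-N-1)^{2}=N(N+1)+(1-2K_{1})N+(K_{1}-1)^{2}$$
splits this maximum into the desired $q$-Gevrey term plus a term linear in $N$; the latter is absorbed into $(N+1)\log A$ as soon as $\log A>\tfrac{(1-2K_{1})\log q}{2k}$.

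The main (and only genuine) obstacle is the bookkeeping when the unconstrained optimizer falls outside the admissible range: in the sufficiency direction this happens for small $N$, when $x_{\star}>\log R_{U'}$, and in the necessity direction it happens for $|\epsilon|$ near the outer boundary of $U$, when $N_{\star}<0$. In both cases only finitely many values are concerned, and I would dispose of them by crude monotonicity or continuity estimates on compact subsets of $\overline{U'}\setminus D(0,r)$ for small $r$, absorbing the result into the multiplicative constants $K_{2}$ or $C$. Apart from this bookkeeping, the proof is a one-line quadratic optimization.
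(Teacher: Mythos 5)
Your proof is correct. Note that the paper itself does not prove this lemma: it simply refers to~\cite{ma} and remarks that the statement is the $q$-analog of Theorem XI-3-2 in~\cite{hssi}, so your argument supplies a proof where the paper offers only a citation. The quadratic-optimization computation you carry out is exactly the expected one, and the key identities check out: with $x=\log|\epsilon|$ the minimizer $N_{\star}=-\tfrac12-\tfrac{k(\log A+x)}{\log q}$ gives $\Phi(N_{\star})=-\tfrac{k}{2\log q}x^{2}+\bigl(\tfrac12-\tfrac{k\log A}{\log q}\bigr)x+\mathrm{const}$, and since $\Phi(N_{\star}+t)=\Phi(N_{\star})+\tfrac{\log q}{2k}t^{2}$, rounding to the nearest admissible integer indeed costs only a bounded amount; likewise $(K_{1}-N-1)^{2}=N(N+1)+(1-2K_{1})N+(K_{1}-1)^{2}$ is correct, and the linear-in-$N$ term is absorbed once $\log A>\tfrac{(1-2K_1)\log q}{2k}$ with $C$ chosen large enough for the constant term. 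One small remark: in the sufficiency direction the "out-of-range optimizer" issue you flag is vacuous, since you only need an \emph{upper} bound for $\Psi_{N}$ on the half-line $x\le\log R_{U'}$, and the unconstrained global maximum is always such a bound; the only place where genuine boundary bookkeeping is needed is the necessity direction (the region where $N_{\star}\le 0$), which you handle correctly via the $N=0$ estimate and the fact that the target bound is bounded below there.
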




The next result leans on the one level version of the $q$-analog of Ramis Sibuya theorem, stated in~\cite{ma}, provides a two level result in this framework. See \cite{lamaq} for a proof.

\begin{theo}\label{teo1215}
Let $(\mathbb{F},\left\|\cdot\right\|_{\mathbb{F}})$ be a Banach space and $(\mathcal{E}_{p})_{0\le p\le \varsigma-1}$ be a good covering in $\C^{\star}$. Let $0<k_1<k_2$, consider a holomorphic function $G_{p}:\mathcal{E}_{i}\to\mathbb{F}$ for every $0\le p\le \varsigma-1$ and put $\Delta_{p}(\epsilon)=G_{p+1}(\epsilon)-G_{p}( \epsilon)$ for every $\epsilon\in Z_{p}:=\mathcal{E}_{p}\cap\mathcal{E}_{p+1}$. Moreover, we assume:
\begin{enumerate}
\item[\textbf{1)}] The functions $G_p(\epsilon)$ are bounded as $\epsilon$ tends to 0 on $\mathcal{E}_{p}$ for every $0\le p\le \varsigma -1$.
\item[\textbf{2)}] There exist nonempty sets $I_1,I_2\subseteq\{0,1,\dots,\varsigma-1\}$ such that $I_1\cup I_2=\{0,1,\dots,\varsigma-1\}$ and $I_1\cap I_2=\emptyset$. Also,
\begin{itemize}
\item[-] for every $p\in I_1$ there exist constants $K_1>0$, $M_1\in\R$ such that
$$\left\|\Delta_p(\epsilon)\right\|_{\mathbb{F}}\le K_1|\epsilon|^{M_1}\exp\left(-\frac{k_1}{2\log(q)}\log^2|\epsilon|\right),\quad \epsilon\in Z_p,$$
\item[-] and, for every $p\in I_2$ there exist constants $K_2>0$, $M_2\in\R$ such that
$$\left\|\Delta_p(\epsilon)\right\|_{\mathbb{F}}\le K_2|\epsilon|^{M_2}\exp\left(-\frac{k_2}{2\log(q)}\log^2|\epsilon|\right),\quad \epsilon\in Z_p.$$
\end{itemize}
\end{enumerate}
Then, there exists a convergent power series $a(\epsilon)\in\mathbb{F}\{\epsilon\}$ defined on some neighborhood of the origin and $\hat{G}^1(\epsilon),\hat{G}^2(\epsilon)\in\mathbb{F}[[\epsilon]]$ such that $G_p$ can be written in the form
$$G_p(\epsilon)=a(\epsilon)+G_{p}^{1}(\epsilon)+G_{p}^{2}(\epsilon).$$
$G_{p}^1(\epsilon)$ is holomorphic on $\mathcal{E}_{p}$ and admits $\hat{G}^{1}(\epsilon)$ as its $q$-Gevrey asymptotic expansion of order $1/k_1$ on $\mathcal{E}_{p}$, for every $p\in I_1$; whilst $G_{p}^2(\epsilon)$ is holomorphic on $\mathcal{E}_{p}$ and admits $\hat{G}^{2}(\epsilon)$ as its $q$-Gevrey asymptotic expansion of order $1/k_2$ on $\mathcal{E}_{p}$, for every $p\in I_2$.
\end{theo}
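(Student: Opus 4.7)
The plan is to reduce Theorem~\ref{teo1215} to the one-level version Theorem~\ref{teoqrs} by splitting the single cocycle $\{\Delta_p\}$ into two auxiliary cocycles, each $q$-exponentially flat of a single order on the \emph{entire} good covering. Concretely, for $j=1,2$ define
\begin{equation*}
\Delta_p^{j}:=\begin{cases}\Delta_p & \text{if } p\in I_j,\\ 0 & \text{if } p\notin I_j,\end{cases}
\end{equation*}
so that $\Delta_p=\Delta_p^{1}+\Delta_p^{2}$ and $\{\Delta_p^{j}\}_{0\le p\le\varsigma-1}$ is $q$-exponentially flat of order $k_j$ on every $Z_p$ (trivially so on the sectors where $\Delta_p^{j}$ vanishes).

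The crucial step is then to realize each auxiliary cocycle as a coboundary by means of a Cauchy--Heine type construction. For each $j$ and each $q\in I_j$, choose a path $L_q$ contained in $Z_q$ leaving the origin into the interior of the covering, and set, for $\epsilon\in\mathcal{E}_p$,
\begin{equation*}
G_p^{j}(\epsilon):=\frac{1}{2\pi i}\sum_{q\in I_j}\int_{L_q}\frac{\Delta_q^{j}(\xi)}{\xi-\epsilon}\,d\xi.
\end{equation*}
The $q$-exponential flatness of $\Delta_q^{j}$ of order $k_j$ makes the integrand integrable near $0$, so each $G_p^{j}$ is holomorphic on $\mathcal{E}_p$ and bounded as $\epsilon\to 0$. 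The classical jump relation across $L_p$ (valid when $p\in I_j$, and a tautology otherwise since $\Delta_p^{j}=0$) yields the cocycle identity $G_{p+1}^{j}-G_p^{j}=\Delta_p^{j}$ on $Z_p$. Moreover, a direct estimate of the Cauchy kernel against the $q$-exponentially flat bound on $\Delta_q^{j}$ shows that the coboundary of $\{G_p^{j}\}$ is itself $q$-exponentially flat of order $k_j$ on every $Z_p$. Applying Theorem~\ref{teoqrs} to $\{G_p^{j}\}_{0\le p\le\varsigma-1}$ then produces a single formal power series $\hat{G}^{j}(\epsilon)\in\mathbb{F}[[\epsilon]]$ which is the common $q$-Gevrey asymptotic expansion of order $1/k_j$ of $G_p^{j}$ on $\mathcal{E}_p$, for every $p$, and in particular for every $p\in I_j$.

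Finally, define $a_p(\epsilon):=G_p(\epsilon)-G_p^{1}(\epsilon)-G_p^{2}(\epsilon)$ on $\mathcal{E}_p$. On each $Z_p$ we have
\begin{equation*}
a_{p+1}(\epsilon)-a_p(\epsilon)=\Delta_p(\epsilon)-\Delta_p^{1}(\epsilon)-\Delta_p^{2}(\epsilon)=0,
\end{equation*}
so the $a_p$ glue into a single holomorphic function $a(\epsilon)$ on $\bigcup_{p}\mathcal{E}_p=\mathcal{U}\setminus\{0\}$. By hypothesis (1) and the boundedness of each $G_p^{j}$ established above, $a$ is bounded near $0$, hence extends holomorphically to $\mathcal{U}$ by Riemann's removable singularity theorem. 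Its Taylor expansion at $0$ is the convergent series $a(\epsilon)\in\mathbb{F}\{\epsilon\}$ of the decomposition, and the splitting $G_p=a+G_p^{1}+G_p^{2}$ has the required properties.

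The main obstacle is the Cauchy--Heine construction in the second paragraph: one must check that the integral defining $G_p^{j}$ converges at the origin (which requires sharp use of the $q$-exponential flatness of $\Delta_q^{j}$), that the coboundary of the family $\{G_p^{j}\}$ reproduces $\Delta_p^{j}$ exactly, and that this coboundary carries the same flatness order $k_j$ so that the one-level Theorem~\ref{teoqrs} is applicable. The remaining gluing and asymptotic arguments are essentially formal once this decomposition is available.
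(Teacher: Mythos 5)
Your argument is correct and is essentially the proof the paper relies on (the paper itself defers to \cite{lamaq}): there the cocycle is likewise split as $\Delta_p=\Delta_p^{1}+\Delta_p^{2}$ according to $I_1,I_2$, each piece is realized as the coboundary of bounded holomorphic functions by Cauchy--Heine integrals, the one-level Theorem~\ref{teoqrs} is applied to each family, and the flat remainder glues and extends to the convergent part $a(\epsilon)$. The only points needing care are the standard ones you already flag, namely the path deformations making each $G_p^{j}$ holomorphic and bounded on all of $\mathcal{E}_p$ and the jump relation producing $\Delta_p^{j}$ exactly.
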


We conclude this section with the main result in the work in which we guarantee the existence of a formal solution of the main problem (\ref{epral}), written as a formal power series in the perturbation parameter, with coefficients in an appropriate Banach space, say $\hat{u}(t,z,\epsilon)$. Moreover, it represents, in some sense to be precised, each solution $u^{\mathfrak{d}_{p}}(t,z,\epsilon)$ of the problem (\ref{epral}). 


From now on, $\mathbb{F}$ stands for the Banach space of bounded holomorphic functions defined on $\mathcal{T}\times H_{\beta'}$, with the supremum norm, where $\beta'<\beta$, as above.


\begin{theo}\label{teo1281}
Under the hypotheses of Theorem~\ref{teo872},  there exists a formal power series 
\begin{equation}\label{e1526}
\hat{u}(t,z,\epsilon)=\sum_{m\ge0}h_m(t,z)\frac{\epsilon^m}{m!}\in\mathbb{F}[[\epsilon]],
\end{equation}
formal solution of the equation

\begin{multline}
Q(\partial_z)\sigma_{q,t}\hat{u}(t,z,\epsilon)\\
=(\epsilon t)^{d_{D_1}}\sigma_{q,t}^{\frac{d_{D_1}}{k_1}+1}R_{D_1}(\partial_{z})\hat{u}(t,z,\epsilon)+(\epsilon t)^{d_{D_2}}\sigma_{q,t}^{\frac{d_{D_2}}{k_2}+1}R_{D_2}(\partial_{z})\hat{u}(t,z,\epsilon)\\
+\sum_{\ell=1}^{D-1}\epsilon^{\Delta_\ell}t^{d_{\ell}}\sigma_{q,t}^{\delta_{\ell}}(c_{\ell}(t,z,\epsilon)R_{\ell}(\partial_z)\hat{u}(t,z,\epsilon))+\sigma_{q,t}f(t,z,\epsilon).\label{e771b}
\end{multline}
Moreover, $\hat{u}(t,z,\epsilon)$ turns out to be the common $q$-Gevrey asymptotic expansion of order $1/k_1$ on $\mathcal{E}_{p}$ of the function $u^{\mathfrak{d}_{p}}$, seen as holomorphic function from $\mathcal{E}_{p}$ into $\mathbb{F}$, for $0\le p\le \varsigma-1$. In addition to that, $\hat{u}$ is of the form
$$\hat{u}(t,z,\epsilon)=a(t,z,\epsilon)+\hat{u}_1(t,z,\epsilon)+\hat{u}_2(t,z,\epsilon),$$
where $a(t,z,\epsilon)\in\mathbb{F}\{\epsilon\}$ and $\hat{u}_1(t,z,\epsilon),\hat{u}_2(t,z,\epsilon)\in\mathbb{F}[[\epsilon]]$ and such that for every $0\le p\le \varsigma-1$, the function $u^{\mathfrak{d}_p}$ can be written in the form
$$u^{\mathfrak{d}_{p}}(t,z,\epsilon)=a(t,z,\epsilon)+u^{\mathfrak{d}_{p}}_1(t,z,\epsilon)+u^{\mathfrak{d}_{p}}_2(t,z,\epsilon),$$
where $\epsilon\mapsto u_1^{\mathfrak{d}_{p}}(t,z,\epsilon)$ is a $\mathbb{F}$-valued function that admits $\hat{u}_1(t,z,\epsilon)$ as its $q$-Gevrey asymptotic expansion of order $1/k_1$ on $\mathcal{E}_{p}$ and also $\epsilon\mapsto u_2^{\mathfrak{d}_{p}}(t,z,\epsilon)$ is a $\mathbb{F}$-valued function that admits $\hat{u}_2(t,z,\epsilon)$ as its $q$-Gevrey asymptotic expansion of order $1/k_2$ on $\mathcal{E}_{p}$.
\end{theo}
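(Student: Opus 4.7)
The strategy is to apply the two-level $q$-Ramis-Sibuya theorem (Theorem~\ref{teo1215}) to the family of actual solutions $u^{\mathfrak{d}_p}$ constructed in Theorem~\ref{teo872}, feeding into it the two flatness regimes on the cocycle of differences supplied by Propositions~\ref{prop925} and~\ref{prop1047}. For each $0\le p\le \varsigma-1$, view $G_p^u(\epsilon):=u^{\mathfrak{d}_p}(\cdot,\cdot,\epsilon)$ as a bounded holomorphic map from $\mathcal{E}_p$ into $\mathbb{F}$, and set $\Delta_p^u(\epsilon)=G_{p+1}^u(\epsilon)-G_p^u(\epsilon)$ on $Z_p=\mathcal{E}_p\cap\mathcal{E}_{p+1}$. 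Partition $\{0,\ldots,\varsigma-1\}$ into $I_2=\{p:U_{\mathfrak{d}_p}\cap U_{\mathfrak{d}_{p+1}}\supseteq U_{\mathfrak{d}_p,\mathfrak{d}_{p+1}}\}$ and $I_1=\{p:U_{\mathfrak{d}_p}\cap U_{\mathfrak{d}_{p+1}}=\emptyset\}$. Propositions~\ref{prop925} and~\ref{prop1047} then yield the required $q$-exponential flatness of order $k_2$ on $Z_p$ for $p\in I_2$ and of order $k_1$ on $Z_p$ for $p\in I_1$.

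Applying Theorem~\ref{teo1215} to the family $(G_p^u)$ produces a convergent power series $a(t,z,\epsilon)\in\mathbb{F}\{\epsilon\}$ and formal series $\hat{u}_1(t,z,\epsilon),\hat{u}_2(t,z,\epsilon)\in\mathbb{F}[[\epsilon]]$, together with splittings
$$u^{\mathfrak{d}_p}(t,z,\epsilon)=a(t,z,\epsilon)+u_1^{\mathfrak{d}_p}(t,z,\epsilon)+u_2^{\mathfrak{d}_p}(t,z,\epsilon),$$
in which $u_j^{\mathfrak{d}_p}$ admits $\hat{u}_j$ as its $q$-Gevrey asymptotic expansion of order $1/k_j$ on $\mathcal{E}_p$ for $j=1,2$. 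Set $\hat{u}:=a+\hat{u}_1+\hat{u}_2\in\mathbb{F}[[\epsilon]]$, so that the asserted decomposition of $u^{\mathfrak{d}_p}$ holds. Since $k_1<k_2$ and $q>1$, the weight $q^{N(N+1)/(2k)}$ is decreasing in $k$, hence any $q$-Gevrey expansion of order $1/k_2$ is also one of order $1/k_1$; consequently $\hat{u}$ is a common $q$-Gevrey asymptotic expansion of order $1/k_1$ of each $u^{\mathfrak{d}_p}$ on $\mathcal{E}_p$, uniformly in $p$.

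It remains to show that $\hat{u}$ solves (\ref{e771b}) formally in $\mathbb{F}[[\epsilon]]$. Since each $u^{\mathfrak{d}_p}$ solves (\ref{epral}) with forcing $f^{\mathfrak{d}_p}$, and since the $q$-difference-differential operators $Q(\partial_z)\sigma_{q,t}$, $\sigma_{q,t}^{d_{D_j}/k_j+1}R_{D_j}(\partial_t)$, $\sigma_{q,t}^{\delta_\ell}R_\ell(\partial_z)$ together with multiplications by the bounded holomorphic functions $c_\ell(t,z,\epsilon)$ and by the monomials in $(t,\epsilon)$ appearing in (\ref{epral}) all preserve $q$-Gevrey asymptotic expansions in $\epsilon$ (after possibly shrinking $\mathcal{T}$ to $\mathcal{T}'\Subset\mathcal{T}$ and $H_{\beta'}$ to $H_{\beta''}$ with $\beta''<\beta'$ to absorb the $\partial_t,\partial_z$ via Cauchy estimates), one may take the asymptotic expansion of both sides of (\ref{epral}) in $\epsilon$ and match Taylor coefficients. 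Lemma~\ref{lema1271} ensures that the right-hand side forcing term $\sigma_{q,t}f^{\mathfrak{d}_p}$ admits $\sigma_{q,t}\hat{f}$ as its common $q$-Gevrey asymptotic expansion of order $1/k_1$. Identifying coefficients of $\epsilon^m$ yields a triangular recurrence for the coefficients $h_m(t,z)\in\mathbb{F}$ of $\hat{u}$ which coincides with the recurrence obtained by formally inserting the ansatz (\ref{e1526}) into (\ref{e771b}), and this settles the formal identity (\ref{e771b}).

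The main technical obstacle is the commutation in the last paragraph: rigorously justifying that differentiation in $t$ and $z$, $q$-dilation in $t$, and multiplication by $\epsilon$-dependent coefficients commute with passage to $q$-Gevrey asymptotic expansion of $\mathbb{F}$-valued functions of $\epsilon$. The cases of $\sigma_{q,t}$ and of multiplication by polynomials in $(t,\epsilon)$ are immediate from the definition; the $\partial_t,\partial_z$ case uses Cauchy estimates on $\mathcal{T}'\Subset\mathcal{T}$ and $H_{\beta''}\subset H_{\beta'}$ to transfer the uniform remainder bounds defining the expansion to bounds on the differentiated remainders; and the multiplication by $c_\ell(t,z,\epsilon)$ follows by expanding $c_\ell$ as a convergent Taylor series in $\epsilon$ and using the Cauchy product rule, which is legitimate since $c_\ell$ is bounded on $\mathcal{T}\times H_{\beta'}\times D(0,\epsilon_0)$. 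Once these elementary but slightly tedious compatibilities are verified, the uniqueness of $\hat{u}$ as a formal solution follows from the triangular nature of the recurrence, and the theorem is proved.
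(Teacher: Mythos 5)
Your proof is correct and follows essentially the same route as the paper: apply the two-level $q$-analog of the Ramis-Sibuya theorem (Theorem~\ref{teo1215}) to the family $G_p(\epsilon)=u^{\mathfrak{d}_p}(\cdot,\cdot,\epsilon)$ using the flatness estimates of Propositions~\ref{prop925} and~\ref{prop1047}, obtain the splitting $a+\hat{u}_1+\hat{u}_2$, and then check that the common order-$1/k_1$ expansion solves the equation formally. The only cosmetic differences are that the paper verifies the formal equation by differentiating (\ref{epral}) $m$ times in $\epsilon$ and letting $\epsilon\to0$ to get the recursion for the coefficients $h_m$, whereas you phrase the same coefficient-matching via compatibility of the operators with $q$-Gevrey expansions, and your assignment of the index sets $I_1$ (empty intersection, $k_1$-flatness) and $I_2$ (overlapping sectors, $k_2$-flatness) is the one consistent with the hypotheses of Theorem~\ref{teo1215}.
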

\begin{proof}
For every $0\le p\le \varsigma -1$, one can consider the function $u^{\mathfrak{d}_{p}}(t,z,\epsilon)$ constructed in Theorem~\ref{teo872}. We define $G_{p}(\epsilon):=(t,z)\mapsto u^{\mathfrak{d}_{p}}(t,z,\epsilon)$, which is a holomorphic and bounded function from $\mathcal{E}_{p}$ into $\mathbb{F}$. In view of Proposition~\ref{prop925} and Proposition~\ref{prop1047}, one can split the set $\{0,1,\dots,\varsigma-1\}$ in two nonempty subsets of indices, $I_1$ and $I_2$ with $\{0,1,\dots,\varsigma-1\}=I_1\cup I_2$ and such that $I_1$ (resp. $I_2$) consists of all the elements in $\{0,1,\dots,\varsigma-1\}$ such that $U_{\mathfrak{d}_{p}}\cap U_{\mathfrak{d}_{p+1}}$ contains the sector $U_{\mathfrak{d}_{p},\mathfrak{d}_{p+1}}$, as defined in Proposition~\ref{prop925} (resp. $U_{\mathfrak{d}_{p}}\cap U_{\mathfrak{d}_{p+1}}=\emptyset$). From (\ref{e927}) and (\ref{e1049aa}) one can apply Theorem~\ref{teo1215} and deduce the existence of formal power series $\hat{G}^1(\epsilon),\hat{G}^2(\epsilon)\in\mathbb{F}[[\epsilon]]$, a convergent power series $a(\epsilon)\in\mathbb{F}\{\epsilon\}$ and holomorphic functions $G_{p}^1(\epsilon),G_{p}^2(\epsilon)$ defined on $\mathcal{E}_{p}$ and with values in $\mathbb{F}$ such that 
$$G_p(\epsilon)=a(\epsilon)+G^1_p(\epsilon)+G^2_p(\epsilon),$$
and for $j=1,2$, one has $G_{p}^j(\epsilon)$ admits $\hat{G}^j(\epsilon)$ as its $q$-Gevrey asymptotic expansion or order $1/k_j$ on $\mathcal{E}_{p}$. 
We put 
$$\hat{u}(t,z,\epsilon)=\sum_{m\ge0}h_m(t,z)\frac{\epsilon^m}{m!}:=a(\epsilon)+\hat{G}^1_p(\epsilon)+\hat{G}^2_p(\epsilon).$$

It only rests to prove that $\hat{u}(t,z,\epsilon)$ is the solution of (\ref{e771b}). Indeed, since $u^{\mathfrak{d}_{p}}$ admits $\hat{u}(t,z,\epsilon)$ as its $q$-Gevrey asymptotic expansion of order $1/k_1$ on $\mathcal{E}_{p}$, we have that
$$\lim_{\epsilon\to 0,\epsilon\in\mathcal{E}_{p}}\sup_{t\in\mathcal{T},z\in H_{\beta'}}|\partial_\epsilon^m u^{\mathfrak{d}_{p}}(t,z,\epsilon)- h_m(t,z)|=0,$$
for every $0\le p\le \varsigma -1$ and $m\ge0$. Let $p\in\{0,1,\dots,\varsigma-1\}$. By construction, the function $u^{\mathfrak{d}_{p}}(t,z,\epsilon)$ solves equation (\ref{e771b}). We take derivatives of order $m\ge 0$ with respect to $\epsilon$ at both sides of equation (\ref{epral}) and deduce that 

\begin{multline}
Q(\partial_z)\sigma_{q,t}(\partial_\epsilon^m u^{\mathfrak{d}_{p}})(t,z,\epsilon)\\
=\sum_{m_1+m_2=m}\frac{m!}{m_1!m_2!}\partial_\epsilon^{m_1}(\epsilon^{d_{D_1}})t^{d_{D_{1}}}\sigma_{q,t}^{\frac{d_{D_{1}}}{k_1}+1}R_{D_{1}}(\partial_{z})(\partial_{\epsilon}^{m_2} u^{\mathfrak{d}_{p}})\\
+\sum_{m_1+m_2=m}\frac{m!}{m_1!m_2!}\partial_\epsilon^{m_1}(\epsilon^{d_{D_2}})t^{d_{D_{2}}}\sigma_{q,t}^{\frac{d_{D_{2}}}{k_2}+1}R_{D_{2}}(\partial_{z})(\partial_{\epsilon}^{m_2} u^{\mathfrak{d}_{p}})\\
+\sum_{\ell=1}^{D-1}\sum_{m_1+m_2+m_3=m}\frac{m!}{m_1!m_2!m_3!}(\partial_{\epsilon}^{m_1}\epsilon^{\Delta_{\ell}})t^{d_{\ell}}
\sigma_{q,t}^{\delta_{\ell}}(\partial_{\epsilon}^{m_2}c_{\ell}(t,z,\epsilon)R_{\ell}(\partial_z)\partial_{\epsilon}^{m_3}u^{\mathfrak{d}_{p}}(t,z,\epsilon))\\
+\sigma_{q,t}(\partial_\epsilon^m f)(t,z,0),\label{e771c}
\end{multline}
for every $(t,z,\epsilon)\in\mathcal{T}\times H_{\beta'}\times\mathcal{E}_{p}$. We let $\epsilon\to0$ in (\ref{e771c}) and obtain the recursion formula
\begin{multline}
Q(\partial_z)\sigma_{q,t} h_{m}(t,z)\\
=\frac{m!}{(m-d_{D_{1}})!}t^{d_{D_{1}}}\sigma_{q,t}^{\frac{d_{D_{1}}}{k_1}+1}R_{D_{1}}(\partial_{z})(h_{m-d_{D_{1}}}(t,z))+\frac{m!}{(m-d_{D_{2}})!}t^{d_{D_{2}}}\sigma_{q,t}^{\frac{d_{D_{2}}}{k_2}+1}R_{D_{2}}(\partial_{z})(h_{m-d_{D_{2}}}(t,z))\\
+\sum_{\ell=1}^{D-1}\sum_{m_2+m_3=m-\Delta_{\ell}}\frac{m!}{m_2!m_3!}t^{d_{\ell}}
\sigma_{q,t}^{\delta_{\ell}}(\partial_{\epsilon}^{m_2}c_{\ell}(t,z,0)R_{\ell}(\partial_z)h_{m_{3}}(t,z))+
\sigma_{q,t}(\partial_\epsilon^mf)(t,z,0),\label{e771d}
\end{multline}
for every $m\ge \max\{d_{D_{1}},d_{D_{2}},\max_{1\le \ell\le D-1}\Delta_{\ell}\}$, and all $(t,z)\in\mathcal{T}\times H_{\beta'}$. 

Bearing in mind that both $c_{l}$ and $f$ are holomorphic w.r.t $\epsilon$ in a neighborhood
of the origin, in such neighborhood one has
\begin{equation}
c_{\ell}(t,z,\epsilon) = \sum_{m \geq 0}
\frac{(\partial_{\epsilon}^{m}c_{m})(t,z,0)}{m!} \epsilon^{m} \ \ , \ \
f(t,z,\epsilon) = \sum_{m \geq 0}
\frac{(\partial_{\epsilon}^{m}f)(t,z,0)}{m!} \epsilon^{m} \label{e1316}
\end{equation}
for every $1 \leq \ell \leq D-1$.

By plugging (\ref{e1526}) into (\ref{e771b}) and bearing in mind (\ref{e771d}) and (\ref{e1316}) one concludes that the formal power series $\hat{u}(t,z,\epsilon)=\sum_{m\ge0}h_m(t,z)\epsilon^m/m!$ is a solution of equation (\ref{e771b}).  

\end{proof}

\vspace{1cm}

\textbf{Acknowledgements:} We want to express our gratitude to the anonimous referee for the valuable comments and suggestions made which helped to improve the work.

\noindent \textbf{Funding:} This project has received funding from the European Research Council (ERC) under the European Union's Horizon 2020 research and innovation programme under the Grant Agreement No 648132. The second and third authors are partially supported by the research project MTM2016-77642-C2-1-P of Ministerio de Econom\'ia, Industria y Competitividad, Spain.

\noindent \textbf{Competing interests:} The authors declare that they have no competing interests.

\noindent \textbf{Availability of data and material:} Data sharing is not applicable to this article as no datasets were generated or analysed during the current study.

\noindent \textbf{Authors' contributions:} All authors contributed equally and significantly in writing this paper and typed, read, and approved the final manuscript.

\providecommand{\bysame}{\leavevmode\hbox to3em{\hrulefill}\thinspace}
\providecommand{\MR}{\relax\ifhmode\unskip\space\fi MR }
\providecommand{\MRhref}[2]{%
  \href{http://www.ams.org/mathscinet-getitem?mr=#1}{#2}
}
\providecommand{\href}[2]{#2}


\providecommand{\bysame}{\leavevmode\hbox to3em{\hrulefill}\thinspace}
\providecommand{\MR}{\relax\ifhmode\unskip\space\fi MR }
\providecommand{\MRhref}[2]{%
  \href{http://www.ams.org/mathscinet-getitem?mr=#1}{#2}
}
\providecommand{\href}[2]{#2}
\begin{thebibliography}{DVZ09}

\bibitem{ba2}
W. Balser, \emph{Formal power series and linear systems of meromorphic
  ordinary differential equations}, Universitext, Springer-Verlag, New York,
  2000.

\bibitem{cota2}
O.~Costin and S.~Tanveer, \emph{Short time existence and {B}orel summability in
  the {N}avier-{S}tokes equation in {$\Bbb R^3$}}, Comm. Partial Differential
  Equations \textbf{34} (2009), no.~7-9, 785--817.

\bibitem{dreyfuseloy}
T. Dreyfus and A. Eloy, \emph{{$q$}-{B}orel-{L}aplace summation for
  {$q$}-difference equations with two slopes}, J. Difference Equ. Appl.
  \textbf{22} (2016), no.~10, 1501--1511.

\bibitem{dreyfus}
T. Dreyfus, \emph{Building meromorphic solutions of {$q$}-difference
  equations using a {B}orel-{L}aplace summation}, Int. Math. Res. Not. IMRN
  (2015), no.~15, 6562--6587.

\bibitem{viziozhang}
L. Di~Vizio and C. Zhang, \emph{On {$q$}-summation and confluence},
  Ann. Inst. Fourier (Grenoble) \textbf{59} (2009), no.~1, 347--392.

\bibitem{hssi}
P.-F. Hsieh and Y. Sibuya, \emph{Basic theory of ordinary differential
  equations}, Universitext, Springer-Verlag, New York, 1999.

\bibitem{lamaq2}
A. Lastra and S. Malek, \emph{On {$q$}-{G}evrey asymptotics for
  singularly perturbed {$q$}-difference-differential problems with an irregular
  singularity}, Abstr. Appl. Anal. (2012), Art. ID 860716, 35.

\bibitem{lama2}
\bysame, \emph{On parametric {G}evrey asymptotics for singularly perturbed
  partial differential equations with delays}, Abstr. Appl. Anal. (2013), Art.
  ID 723040, 18. \MR{3129346}

\bibitem{lama}
\bysame, \emph{On parametric {G}evrey asymptotics for singularly perturbed
  partial differential equations with delays}, Abstr. Appl. Anal. (2013), Art.
  ID 723040, 18.

\bibitem{lamaq}
\bysame, \emph{On parametric multilevel {$q$}-{G}evrey asymptotics for some
  linear {$q$}-difference-differential equations}, Adv. Difference Equ. (2015),
  2015:344, 52.

\bibitem{lama0}
\bysame, \emph{On parametric multisummable formal solutions to some nonlinear
  initial value {C}auchy problems}, Adv. Difference Equ. (2015), 2015:200, 78.

\bibitem{lm1}
\bysame, \emph{On multiscale {G}evrey and q-{G}evrey asymptotics for some linear
  {$q$}-difference differential initial value Cauchy problems}, J. Difference Equ. Appl. 23 (2017), no. 8, 1397--1457.

\bibitem{lamasa2}
A. Lastra, S. Malek, and J. Sanz, \emph{On {$q$}-asymptotics
  for linear {$q$}-difference-differential equations with {F}uchsian and
  irregular singularities}, J. Differential Equations \textbf{252} (2012),
  no.~10, 5185--5216.

\bibitem{ma2}
S.~Malek, \emph{On {G}evrey asymptotics for some nonlinear integro-differential
  equations}, J. Dyn. Control Syst. \textbf{16} (2010), no.~3, 377--406.

\bibitem{ma0}
\bysame, \emph{On singularly perturbed {$q$}-difference-differential equations
  with irregular singularity}, J. Dyn. Control Syst. \textbf{17} (2011), no.~2,
  243--271.

\bibitem{ma}
S. Malek, \emph{Parametric {G}evrey asymptotics for a $q$-analog of
  some linear initial value problem}, Funkcial. Ekvac. 60 (2017), no. 1, 21--63.


\bibitem{pr1}
D. Pravica, N. Randriampiry, M. Spurr, \emph{On q-advanced spherical Bessel functions of the first kind and perturbations of the Haar wavelet.} Appl. Comput. Harmon. Anal. 44 (2018), no. 2, 350--413.


\bibitem{pr2}
D. Pravica, N. Randriampiry, M. Spurr, \emph{$q$-Advanced models for tsunamis and rogue waves.}Abstr. Appl. Anal., 2012 (2012), Article 414060.

\bibitem{pr3}
D. Pravica, N. Randriampiry, M. Spurr,  \emph{Solutions of a class of multiplicatively advanced differential equations.} C. R. Math. Acad. Sci. Paris 356 (2018), no. 7, 776--817.

\bibitem{ra}
J.-P. Ramis, \emph{About the growth of entire functions solutions of
  linear algebraic {$q$}-difference equations}, Ann. Fac. Sci. Toulouse Math.
  (6) \textbf{1} (1992), no.~1, 53--94.

\bibitem{ta} 
H. Tahara, \emph{{$q$}-{A}nalogues of Laplace and Borel transforms by means of {$q$}-exponentials}, Ann. Inst. Fourier (Grenoble) 67 (2017), no. 5, 1865--1903.

\bibitem{taya13}
H. Tahara and H. Yamazawa, \emph{Multisummability of formal
  solutions to the {C}auchy problem for some linear partial differential
  equations}, J. Differential Equations \textbf{255} (2013), no.~10,
  3592--3637.

\bibitem{taya2}
\bysame, \emph{{$q$}-analogue of summability of formal solutions of some linear
  {$q$}-difference-differential equations}, Opuscula Math. \textbf{35} (2015),
  no.~5, 713--738.


\bibitem{ya}
H. Yamazawa, \emph{Holomorphic and singular solutions of
  {$q$}-difference-differential equations of {B}riot-{B}ouquet type}, Funkcial.
  Ekvac. \textbf{59} (2016), no.~2, 185--197.

\bibitem{ya2}
\bysame, \emph{Gevrey and {$q$}-Gevrey asymptotics for some linear {$q$}-difference-differential equations}, communication in ``Formal and analytic solutions of functional equations on the complex domain'', Kyoto, 2018.


\end{thebibliography}
\end{document}